\newtheorem{theorem}{Theorem}
\newtheorem{lemma}[theorem]{Lemma}
\newtheorem{corollary}[theorem]{Corollary}
\newtheorem{thm}{Theorem} 
\theoremstyle{remark}
\newtheorem{remark}{Remark}[section]
\newcommand{\s}{\mathcal{S}}
\newcommand{\dx}{\, dx}
\newcommand{\norm}[2]{\left\|#1\right\|_{#2}}
\newcommand{\dy}{\, dy}
\newcommand{\dz}{\, dz}
\newcommand{\derivs}{\abs{\partial_x^\alpha \partial_\xi^\beta \partial_\eta^\gamma \sigma(x,\xi,\eta)}}
\newcommand{\xxe}{(x,\xi,\eta)}
\newcommand{\upxe}{(1+\abs{\xi}+\abs{\eta})}
\newcommand{\xpe}{\abs{\xi}+\abs{\eta}}
\newcommand{\absx}{\abs{\xi}}
\newcommand{\abse}{\abs{\eta}}
\newcommand{\dxde}{\,d\xi \,d\eta}
\newcommand{\hphi}{\hat \phi}
\newcommand{\ei}[2]{e^{i #1 \cdot #2}}
\newcommand{\sym}{\sigma(x,\xi,\eta)}
\newcommand{\na}{\mathbb{N}}
\newcommand{\re}{\mathbb{R}}
\newcommand{\rn}{\mathbb{R}^n}
\newcommand{\rtn}{\mathbb{R}^{2n}}
\newcommand{\ent}{\mathbb{Z}}
\newcommand{\abs}[1]{\vert #1 \vert}
\newcommand{\FR}[2]{{\textstyle \frac{#1}{#2}}}
\newcommand{\fr}[2]{{\textstyle \frac{#1}{#2}}}
\newcommand{\tm}[1]{\text{\rm #1}}
\newcommand{\supp}{ {\rm supp} }
\newcommand{\bs}{BS_{\rho,\delta}^m}
\newcommand{\bsz}{BS_{\rho,\delta}^0}
\newcommand{\bsm}{BS_{\rho,\delta,K,N}^m}
\begin{document}


\subjclass[1991]{Primary 35S05, 47G30; Secondary 42B15, 42B20}

\keywords{Bilinear pseudodifferential operators, bilinear
H\"ormander classes, symbolic calculus, Calder\'on-Zygmund theory}

\thanks{ Partial NSF support under the following grants is acknowledged: third author  DMS 0901587; fourth author DMS 1101327; fifth author DMS 0800492 and DMS  1069015.}

\address{\'Arp\'ad B\'enyi, Department of Mathematics,
516 High St Western, Washington University, Bellingham, WA 98225,
USA.} \email{arpad.benyi@wwu.edu}

\address{Fr\'ed\'eric Bernicot, CNRS-Universit\'e Lille 1, Laboratoire de Math\'ematiques Paul Painlev\'e,
59655 Villeneuve d'Ascq Cedex (France).} \email{frederic.bernicot@math.univ-lille1.fr }

\address{Diego Maldonado, Department of Mathematics, 138 Cardwell Hall, Kansas State University,
Manhattan, KS 66506, USA.} \email{dmaldona@math.ksu.edu}

\address{Virginia Naibo, Department of Mathematics, 138 Cardwell Hall, Kansas State University,
Manhattan, KS 66506, USA.} \email{vnaibo@math.ksu.edu}

\address{Rodolfo H. Torres, Department of Mathematics, University of Kansas, Lawrence, KS
66045, USA.} \email{torres@math.ku.edu}

\title[Bilinear pseudodifferential operators]
{On the H\"ormander classes of bilinear pseudodifferential operators II}

\author[\'A. B\'enyi, F. Bernicot, D. Maldonado, V. Naibo, \and R. H. Torres]
{\'Arpad B\'enyi, Fr\'ed\'eric Bernicot, Diego Maldonado, Virginia Naibo, \and Rodolfo H. Torres}

\date{\today}

\begin{abstract}
Boundedness properties  for pseudodifferential operators with symbols in the bilinear H\"ormander classes of  sufficiently negative order are proved.  The results are obtained in the scale of  Lebesgue spaces and, in some cases,   end-point  estimates involving weak-type spaces and BMO are provided as well. From the Lebesgue space estimates, Sobolev ones are then easily obtained using functional calculus and interpolation. In addition, it is shown that, in contrast with the linear case,  operators associated with symbols of order zero may fail to be bounded on product of Lebesgue spaces.

 \end{abstract}

\maketitle

\section{Introduction}\label{intro}

In this article we continue the systematic study of the general H\"ormander classes of bilinear pseudodifferential operators $BS^{m}_{\rho, \delta}$ (see the next section for definitions) started in \cite{bmnt}.  While the work  in   \cite{bmnt} focussed mainly on basic properties related to the symbolic calculus of the bilinear pseudodifferential operators and some point-wise estimates for their kernels, the present work addresses boundedness properties on the full scale of Lebesgue spaces.  The general properties developed in \cite{bmnt} will become very useful in this current work and will allow us to provide a fairly complete range of results.

The literature on bilinear pseudodifferential operators continues to
grow and  \cite{bmnt} gives also a historical account and
motivations,  as well as numerous references in the subject. We
would like to reiterate here that most results so far have dealt
with the cases  $\rho=1$ and $\rho=0$. For the first value of $\rho$
the available boundedness  and unboundedness results, and  other
properties of the classes $BS^{0}_{1, \delta}$ are similar to the
ones in the linear situation. They are closely tied to the
(bilinear) Calder\'on-Zygmund theory, which was started by
Coiman-Meyer in the 70's (see e.g. \cite{CoMe} and the references
therein) and was further developed by Christ-Journ\'e \cite{CJ},
Kenig-Stein \cite{KS} and  Grafakos-Torres \cite{GT}.  See also
B\'enyi-Torres \cite{bt1} and Maldonado-Naibo \cite{MN}.
 The value of $\rho=0$, however,  produces some surprises and the
possible theory deviates from the linear situation. In particular
the famous Calder\'on-Villancourt theorem \cite{CaVa} does not hold
unless further properties on the symbols in $BS^{0}_{0, 0}$ are
imposed; see B\'enyi-Torres \cite{bt2} and
 Bernicot-Shrivastava \cite{bernicot-shrivastava}.

 One important contribution for other values of $\rho$, almost the exception so far, is the recent work
 of Michalowski-Rule-Staubach \cite{MRS}.  Since, for example, the class $BS^{0}_{0, 0}$ does not map
 $L^\infty \times L^2 \to L^2$, it was asked  in \cite{bmnt} (and some answers were provided) about
  results of the form  $X \times L^2 \to L^2$ with some functional space $X$ smaller than $L^\infty$
  and symbols in $BS^{0}_{\rho, \delta}$.  The question of whether the classes $BS^{0}_{\rho, \delta}$
  produce operators that are bounded on some product of Lebesgue spaces when $0 \le \delta < \rho$ was
  left unanswered in  \cite{bmnt} (recall the keystone result that the linear class $S^{0}_{\rho, \delta}$ is bounded on $L^2$,  as proved by H\"ormander \cite{Ho}). Likewise in \cite{MRS}
 the authors asked about which negative values of $m = m(\rho)$ produce classes $BS^{m}_{\rho, \delta}$  for which the corresponding bilinear pseudodifferential operators are bounded
 from $L^{p_1} \times L^{p_2} $ into $L^p$  with  $1/{p_1}+1/{p_2} =1/p$ and $1<p_1,p_2,p \leq \infty$.  Here, we will expand and improve some of the  results in  \cite{MRS} in several directions.

 First, we will show that it is very much relevant to look at negative values of $m$ when $\rho<1$ because operators with symbols in
 the classes $BS^{0}_{\rho, \delta}$ may fail to be bounded on any product of Lebesgue spaces. This is proved in Theorem~\ref{thm:main1} below, thus answering in the negative the question  left unanswered   in \cite{bmnt}.  Next we show in Theorem~\ref{thm:main2}  that the values of $m$ provided in \cite{MRS} can be taken much larger  (smaller in absolute value). We succeed in doing so using kernel estimates and the symbolic calculus from \cite{bmnt}, also used in \cite{MRS},  but adding arguments involving the complex interpolation of the classes $BS^{m}_{\rho, \delta}$. Moreover, bringing back the
 bilinear Calder\'on-Zygmund theory for sufficiently negative values of $m$ and using further interpolation arguments we also obtain results outside the {\it Banach triangle};  i.e., for  $1/p_1+1/p_2 =1/p,$  but $1/2<p<1$.  We also obtain appropriate weak-type end-point estimates at one end and a strong one at  another. This last is the bilinear analog of a result of C. Fefferman, which was also a keystone in the understanding of linear pseudodifferential operators.

  Fefferman \cite{fef} showed, in particular, that the linear classes $S^{-(1-\rho) \frac{n}{2}}_{\rho, 0}$, for $0<\rho<1$, map  $L^\infty \to BMO$. The natural conjecture then is that $BS^{-(1-\rho) n}_{\rho, 0}$ should map  $L^\infty \times L^\infty \to BMO$, since often the role of $n$ in the linear case is played by $2n$ in the bilinear setting.  We are able to prove this conjecture in Theorem~\ref{thm:main4} at least for  $0<\rho<1/2$. Though we use some ideas from \cite{fef},  new technical difficulties not present in the linear case  need to be overcome.
 In fact, Fefferman used the result of H\"ormander that operators with symbols in $S^{0}_{\rho, \delta}$ are bounded on $L^2$ but, as
 Theorem~\ref{thm:main1} establishes, the analogous result for bilinear operators is false. Instead we rely  on the
 $L^2 \times L^2 \to L^2$ boundedness of certain classes of symbols as proved in Theorem~\ref{thm:main3}.

 The article is organized as follows. In the next section we include the main definitions, some basic properties and the precise statements of the main theorems.  We also provide some further motivation and applications.
 The subsequence sections, Sections \ref{sec:main2}-\ref{sec:bssobolev}, contain the detailed proof of each of the main theorems in the order we list them, except that a series of technical lemmata used in the proof of Theorem~\ref{thm:main4} are postponed until Section~\ref{sec:prooflemmas}.  Section~\ref{sec:weights} contains some weighted versions of the results. Further remarks about the results and comparisons to other linear and bilinear ones are provided throughout the paper as well. Upper-case letters are used to label theorems corresponding to known results  while  single numbers are used for theorems, lemmas and corollaries that are  proved in this article.

Unless otherwise indicated, the underlying space for the functional classes used  will be Euclidean
space $\re^n$. In particular, $L^p$ will stand for $L^p(\re^n)$ and
 $W^{s,p}$  will stand for $W^{s,p}(\re^n),$  the Sobolev space of functions with ``$s$ derivatives'' in $L^p$. Their respective norms will be denoted $\norm{f}{L^p}$ and $\norm{f}{W^{s,p}}.$ Finally, $\mathcal{S}$ will indicate the Schwartz class on $\re^n.$

Throughout the symbol $\lesssim$ will be used in inequalities where constants are independent of its left and right hand sides.

\section{Main Results}

Let $\delta,\,\rho \geq 0$ and $m \in \re.$ In \cite{Ho}, H\"ormander introduced the class of symbols $S^m_{\rho,\delta}:$  $\sigma=\sigma(x,\xi),$ $x,\,\xi\in\re^n,$ belongs to $S^m_{\rho,\delta}$ if for all multi-indices $\alpha$ and $\beta$
\begin{equation*}
 \sup_{x,\,\xi \in\re^n}  |\partial_x^\alpha\partial_\xi^\beta \sigma (x,\xi)|  (1+|\xi|)^{-m-\delta
|\alpha|+\rho |\beta|}  < \infty.
\end{equation*}
For each symbol $\sigma$ there is an associated linear pseudodifferential operator $T_\sigma$  defined  by
\[
T_\sigma (f)(x)=\int_{\re^n} \sigma (x, \xi)\widehat
f(\xi)\,e^{ix\cdot \xi}\, d\xi,\qquad f\in \mathcal{S},
\]
where $\widehat{f}$ denotes the Fourier transform of $f.$

 The bilinear counterpart of $S^{m}_{\rho,\delta}$ is denoted $\bs.$ A bilinear symbol
$\sym,$ $x,\,\xi,\,\eta\in\re^n,$ belongs to the bilinear H\"ormander class $\bs$ if for all  multi-indices $\alpha,
\beta$ and $\gamma$,
\[
\sup_{x,\xi,\eta\in\re^n}  \derivs \upxe^{-m -\delta \abs{\alpha}+\rho(\abs{\beta}+\abs{\gamma})}<\infty.
\]
For $\sigma\in \bs$ and non-negative integers $K$ and
$N$ define
\[
\norm{\sigma}{K,N} := \mathop{\sup_{\abs{\alpha}\leq K}}_{ \abs{\beta},\abs{\gamma} \leq N}
 \sup_{x, \xi, \eta\in\re^n} {\derivs}\upxe^{-m - \delta \abs{\alpha}+ \rho(\abs{\beta}+\abs{\gamma})}.
 \]
Then the  family of norms $\{\|\cdot\|_{K,N}\}_{K,N\in \na_0}$ turns
$BS^m_{\rho,\delta}$ into a Fr\'echet space.

For $\sigma\in \bs$  we consider the bilinear pseudodifferential operator defined by
\begin{align*}
T_\sigma(f,g)(x)& := \int_{\re^n} \int_{\re^n}\sym \hat f(\xi) \hat g(\eta)\,\ei{x}{(\xi + \eta)} \,d\xi
\,d\eta, \quad f,\,g\in\mathcal{S}.
\end{align*}

We know proceed to state the new results  in this article.

\begin{theorem}\label{thm:main1} Let $0 \leq \rho < 1$, $0 \leq \delta \leq 1$, and
$1 \le  p, p_1, p_2 < \infty$ such that $\frac{1}{p} = \frac{1}{p_1}
+ \frac{1}{p_2}.$ There exist  symbols in $BS^0_{\rho,\delta}$ that
give rise to unbounded operators from $L^{p_1}\times L^{p_2}$ into
$L^p$.
\end{theorem}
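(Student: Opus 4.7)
The strategy is to construct, for each admissible choice of parameters and Lebesgue triple, an explicit symbol in $BS^0_{\rho,\delta}$ whose bilinear operator is unbounded on $L^{p_1}\times L^{p_2}\to L^p$. The plan builds on the $\rho=\delta=0$ counterexamples of \cite{bt2} and \cite{bernicot-shrivastava} and extends them to the full parameter range $0\le\rho<1$, $0\le\delta\le1$.

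The first step is a reduction via monotonicity of the symbol classes: direct inspection of the defining inequalities shows that $BS^0_{\rho_1,\delta_1}\subset BS^0_{\rho_2,\delta_2}$ whenever $\rho_1\ge\rho_2$ and $\delta_1\le\delta_2$. Hence it suffices to exhibit, for each $\rho\in[0,1)$, a single symbol in $BS^0_{\rho,0}$ producing an unbounded operator, since such a symbol automatically lies in every larger class $BS^0_{\rho',\delta'}$ with $\rho'\le\rho$ and $\delta'\ge 0$. This reduction concentrates the work on the smallest (most restrictive) class appearing in the statement.

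For the construction, I would take a lacunary superposition of frequency- and $x$-modulated bumps,
\[
\sigma(x,\xi,\eta)=\sum_{k\ge 1} c_k\, e^{i\mu_k\cdot x}\,\varphi\!\left(\frac{\xi-\xi_k}{\lambda_k^{\rho}}\right)\varphi\!\left(\frac{\eta-\eta_k}{\lambda_k^{\rho}}\right),
\]
with $\varphi$ a fixed Schwartz bump, $\lambda_k\to\infty$ super-lacunary, centers $|\xi_k|,|\eta_k|\sim\lambda_k$ chosen so that the $(\xi,\eta)$-supports of distinct summands are disjoint, and $|\mu_k|$ uniformly bounded (so the $\delta=0$ condition on $x$-derivatives is preserved). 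The scaling $\lambda_k^{\rho}$ on the bumps is what forces $\sigma\in BS^0_{\rho,0}$ with uniform Fr\'echet seminorms: each $(\xi,\eta)$-derivative picks up a factor $\lambda_k^{-\rho}\sim(1+|\xi|+|\eta|)^{-\rho}$, matching the class exactly. The coefficients $c_k$ and the centers $(\xi_k,\eta_k,\mu_k)$ are then the free parameters used to enforce unboundedness. Testing against wave packets $f_N=\sum_{k\le N}\alpha_k\psi_k$ and $g_N=\sum_{k\le N}\beta_k\widetilde\psi_k$ whose Fourier supports are matched to the bumps of $\sigma$, each term of the symbol contributes a definite wave packet to $T_\sigma(f_N,g_N)$. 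By arranging the output frequencies $\mu_k+\xi_k+\eta_k$ to concentrate in a bounded region (instead of dispersing lacunarily) and balancing the input $L^{p_i}$-norms via Littlewood--Paley or Khintchine-type estimates, one obtains
\[
\frac{\|T_\sigma(f_N,g_N)\|_{L^p}}{\|f_N\|_{L^{p_1}}\,\|g_N\|_{L^{p_2}}}\longrightarrow\infty
\]
as $N\to\infty$, which witnesses the unboundedness of $T_\sigma$.

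The main technical obstacle is that the construction must be \emph{genuinely bilinear}. Any factorized symbol $\sigma(x,\xi,\eta)=a(x,\xi)b(x,\eta)$ with $a,b\in S^0_{\rho,0}$ produces the pointwise product $T_a f\cdot T_b g$, which is bounded on every $L^{p_1}\times L^{p_2}\to L^p$ in the admissible range (with $p_1,p_2>1$) by H\"older combined with the classical $L^q$-boundedness of linear $S^0_{\rho,0}$ operators for $1<q<\infty$. Similarly, a bilinear $(\xi,\eta)$-multiplier whose output frequencies form a well-separated lacunary sequence is controlled by bilinear Littlewood--Paley projections. Hence the counterexample must both couple $\xi$ and $\eta$ nontrivially \emph{and} arrange the output so as to defeat any such orthogonality, all while respecting the regularity cage of $BS^0_{\rho,0}$. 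Verifying simultaneously that the symbol-class seminorms remain finite and that the operator norm diverges is the delicate core of the argument, and the analysis becomes tightest as $\rho\to 1^-$ because the bump scale $\lambda_k^{\rho}$ then approaches the frequency scale $\lambda_k$ and the disjoint-support orthogonality shrinks to a minimum.
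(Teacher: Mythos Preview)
Your proposal is an outline rather than a proof: the decisive step---verifying that the operator norm blows up while the $BS^0_{\rho,0}$ seminorms stay bounded---is precisely what you label ``the delicate core of the argument,'' and it is not carried out. In particular, the $L^{p_i}$-norms of lacunary sums of wave packets at \emph{varying} scales $\lambda_k^{\rho}$ require genuine work to control, and the output side (where you need the contributions to add constructively in $L^p$) is left to intuition. The $x$-modulation $e^{i\mu_k\cdot x}$ with bounded $|\mu_k|$ is also essentially idle: it shifts output frequencies by $O(1)$ and does not help concentrate them. The construction may be completable, but what is written does not yet constitute a proof.

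More to the point, the paper sidesteps the entire construction. Its argument is soft: assume for contradiction that every $\sigma\in BS^0_{\rho,\delta}$ yields a bounded operator. The Closed Graph Theorem (Lemma~\ref{lemma:opnorm}, applied to $\sigma\mapsto T_\sigma$ as a map from the Fr\'echet space $BS^0_{\rho,\delta}$ into the space of bounded bilinear operators) then gives a uniform estimate $\|T_\sigma\|\lesssim\|\sigma\|_{N,N}$ for some fixed $N$. Now take any $x$-independent $\sigma\in BS^0_{\rho,\delta}$ and dilate: $\sigma_\lambda(\xi,\eta):=\sigma(\lambda\xi,\lambda\eta)$. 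The H\"older relation $\frac{1}{p}=\frac{1}{p_1}+\frac{1}{p_2}$ makes the bilinear operator bound scale-invariant, while for $0<\lambda<1$ one has $C_{\beta,\gamma}(\sigma_\lambda)\le\lambda^{(1-\rho)(|\beta|+|\gamma|)}C_{\beta,\gamma}(\sigma)$. Letting $\lambda\to 0$ (here $\rho<1$ is essential) kills every term with $|\beta|+|\gamma|>0$ and leaves
\[
\|T_\sigma(f,g)\|_{L^p}\lesssim C_{0,0}(\sigma)\,\|f\|_{L^{p_1}}\|g\|_{L^{p_2}},
\]
i.e., boundedness controlled by $\|\sigma\|_{L^\infty}$ alone. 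A short truncation-and-Fatou argument then extends this bound to \emph{all} $x$-independent symbols in $BS^0_{0,0}$, contradicting the known $\rho=\delta=0$ counterexample of B\'enyi--Torres. No new symbol is ever built; the scaling collapses the $\rho>0$ question onto the $\rho=0$ one. This is both shorter and more robust than the explicit lacunary construction you propose, and it explains transparently why the threshold is exactly $\rho=1$.
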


As mentioned in the introduction, the result in Theorem~\ref{thm:main1}  is in  contrast with the
fact that linear pseudodifferential operators of order zero do
produce bounded operators on $L^2$. The case $\rho=\delta=0$ of Theorem~\ref{thm:main1} was
 proved by B\'enyi and Torres  in \cite{bt2}.

\begin{theorem}\label{thm:main2}
 Let $0\le \delta\le \rho\le 1,$ $\delta<1,$ $1\le p_1,p_2\le
\infty,$ $p$ given by $\frac{1}{p}=\frac{1}{p_1}+\frac{1}{p_2},$
\[
m<m(p_1,p_2):=n(\rho -1)\left(\max\{\FR{1}{2},\,\fr{1}{p_1},\, \fr{1}{p_2},\, 1-\fr{1}{p}\}+\max\{\fr{1}{p}-1,0\}\right),
\]
and $\sigma\in BS^m_{\rho,\delta}.$
\begin{enumerate}[(i)]
\item If $p\ge 1$ then  there exist $K,\,N\in\na_0$ such that
\[
\norm{T_\sigma(f,g)}{L^p}\lesssim \norm{\sigma}{K,N}\,\norm{f}{L^{p_1}}\norm{g}{L^{p_2}}
\]
for all $f\in L^{p_1}$ and $g\in L^{p_2}.$

\item If $0<\rho,$ $p<1,$  $p_1\neq 1$ and $p_2\neq 1$
then  there exist $K,\,N\in\na_0$ such that
\[
\norm{T_\sigma(f,g)}{L^p}\lesssim \norm{\sigma}{K,N}\,\norm{f}{L^{p_1}}\norm{g}{L^{p_2}}
\]
for all $f\in L^{p_1}$ and $g\in L^{p_2}.$

 \item If $0<\rho,$ $p<1$ and $p_1=1$ or $p_2=1$ then  then  there exist $K,\,N\in\na_0$  such that
\[
\norm{T_\sigma(f,g)}{L^{p,\infty}}\lesssim \norm{\sigma}{K,N}\,\norm{f}{L^{p_1}}\norm{g}{L^{p_2}}
\]
for all $f\in L^{p_1}$ and $g\in L^{p_2}.$
 \end{enumerate}
\end{theorem}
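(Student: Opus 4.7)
The plan is to establish a small collection of \emph{corner estimates} at extremal tuples $(p_1,p_2,p)$ and then to fill in the full parameter region by two interpolation arguments: a complex interpolation of the symbol classes to handle the Banach triangle ($p\ge 1$), and bilinear Calder\'on-Zygmund theory to exit the triangle when $p<1$.

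First I would decompose dyadically $\sigma=\sum_{j,k\ge 0}\sigma_{j,k}$, with $\sigma_{j,k}$ frequency-localized to $\{|\xi|\sim 2^j,\,|\eta|\sim 2^k\}$, and apply the kernel estimates from \cite{bmnt} to each piece. Summing the $L^1_{(y,z)}$-norms of the resulting kernels uniformly in $x$ yields the corner $L^\infty\times L^\infty\to L^\infty$ for $m<n(\rho-1)$; the corner $L^2\times L^2\to L^1$ for $m<n(\rho-1)/2$ is furnished by Theorem~\ref{thm:main3}; and H\"older together with symmetry produces $L^2\times L^\infty\to L^2$ and $L^\infty\times L^2\to L^2$ at the same threshold as the $(2,2,1)$ case.

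Next I would construct an analytic family of symbols $\{\sigma_z\}$ on a vertical strip with $\sigma_0=\sigma$ and $\sigma_z\in BS^{m+\tau z}_{\rho,\delta}$, with seminorms controlled by $e^{A|\mathrm{Im}\,z|}$, so that the bilinear version of Stein's complex interpolation theorem applies between the four corner estimates above. Interpolating $(\infty,\infty,\infty)$ against whichever of the three $L^2$-corners is closest to the target $(p_1,p_2,p)$ reproduces exactly the threshold
\[
m<n(\rho-1)\max\{\tfrac{1}{2},\,\tfrac{1}{p_1},\,\tfrac{1}{p_2},\,1-\tfrac{1}{p}\},
\]
proving part~(i); the $\max$-structure emerges because each of the four candidate values is realized as a Riesz-Thorin convex combination of two corner thresholds.

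For parts~(ii) and~(iii), with $p<1$, complex interpolation alone is insufficient. Instead I would use the dyadic kernel bounds to verify directly that, for $m$ sufficiently negative, the kernel of $T_\sigma$ satisfies the bilinear Calder\'on-Zygmund size and smoothness conditions off the diagonal. Combined with any strong estimate already established in part~(i), the Grafakos-Torres theorem then extends $T_\sigma$ boundedly to all $(p_1,p_2,p)$ with $1<p_1,p_2<\infty$ and $1/p_1+1/p_2=1/p$, and supplies the weak-type endpoint when $p_1=1$ or $p_2=1$; a final interpolation against $L^\infty\times L^\infty\to L^\infty$ produces the $\max\{1/p-1,0\}$ correction in $m(p_1,p_2)$. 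The main obstacle I anticipate is the construction of a genuinely admissible analytic family inside $BS^m_{\rho,\delta}$ with the seminorm growth required by Stein interpolation, together with the careful bookkeeping needed to match each interpolation step to the precise form of the threshold $m(p_1,p_2)$ rather than a strictly larger one.
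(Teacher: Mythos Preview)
There is a genuine gap in your argument for part~(i): your four corner estimates lie at the points $(1/p_1,1/p_2)\in\{(0,0),(\tfrac12,0),(0,\tfrac12),(\tfrac12,\tfrac12)\}$, and complex interpolation (Stein or otherwise) can only produce bounds at convex combinations of these, i.e.\ inside the small square $[0,\tfrac12]^2$ (region~IV in Figure~\ref{fig1:main2}). To reach, say, $(1/p_1,1/p_2)=(\tfrac34,0)$ with threshold $m<\tfrac34\,n(\rho-1)$ you would need an endpoint with $1/p_1\ge\tfrac34$, and you have none. The paper closes this gap with two additional corners at $(1,0)$ and $(0,1)$, namely $L^1\times L^\infty\to L^1$ and $L^\infty\times L^1\to L^1$ for $m<n(\rho-1)$, obtained by \emph{duality via the symbolic calculus} (Theorem~\ref{symbolic}): since $\sigma^{*1}\in BS^m_{\rho,\delta}$ one has $T_{\sigma^{*1}}:L^\infty\times L^\infty\to L^\infty$, hence $T_\sigma=(T_{\sigma^{*1}})^{*1}:L^1\times L^\infty\to L^1$. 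With six corners in hand the paper then uses the complex interpolation of the classes $BS^m_{\rho,\rho,N,N}$ (Lemma~\ref{interpolation}) and trilinear complex interpolation, rather than an analytic family of operators, to fill the full triangle. The same missing corners also affect your treatment of (ii)--(iii): the paper interpolates the bilinear Calder\'on--Zygmund endpoint at $(1,1)$ against $(1,0)$, $(0,1)$, and $(\tfrac12,\tfrac12)$ (not against $(0,0)$) to obtain the weak estimates along the non-Banach edges, and only then uses real interpolation for the interior.

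Two smaller issues: Theorem~\ref{thm:main3} gives $L^2\times L^2\to L^2$ under an $L^2$-type hypothesis on the symbol which, for $\sigma\in BS^m_{\rho,\delta}$, forces roughly $m<-n/2$; it does not deliver $L^2\times L^2\to L^1$ for $m<n(\rho-1)/2$. The paper takes all three $L^2$-corners directly from Theorem~\ref{MRS}. Likewise, ``H\"older together with symmetry'' does not convert $L^2\times L^2\to L^1$ into $L^2\times L^\infty\to L^2$; these are independent estimates (again taken from Theorem~\ref{MRS}, or derivable from $L^2\times L^2\to L^1$ by duality once the symbolic calculus is available).
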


When $p\geq 1$ (Banach case), Theorem~\ref{thm:main2} improves  the results in  \cite[Theorem 5.5]{MRS} by Michalowski, Rule and Staubach which require $m<n(\rho -1)\max\{\FR{1}{2},(\fr{2}{p_1}-\fr{1}{2}), (\fr{2}{p_2}-\fr{1}{2}), (\fr{3}{2}-\fr{2}{p})\}.$
This improvement is based on the following facts:
\begin{enumerate}
 \item Bilinear pseudodifferential operators with symbols in the classes $BS^{m}_{\rho,\delta}$ with $m<n(\rho-1)$ (as opposed to $m<\frac{3}{2}n(\rho-1)$ used in \cite{MRS}) are bounded from $L^\infty\times L^\infty$ into $L^\infty,$ with norm bounded by the norm of the symbol (see also Remark~\ref{caseLinfty}).
   \item  Roughly speaking, the intermediate spaces in the complex interpolation of two bilinear H\"ormander classes  are other  bilinear H\"ormander classes.
\end{enumerate}
When $p<1$ (non-Banach case), the result of Theorem~\ref{thm:main2} relies on interpolation arguments using boundedness of operators in the Banach case and bilinear Calder\'on-Zygmund theory.

We remark that the operator $T_\sigma$ is a priori defined on $\mathcal{S}\times \mathcal{S}$. In Theorem~\ref{thm:main2}, $T_{\sigma}(f,g)$ for $f\in L^{p_1}$ and $g\in L^{p_2}$ denotes the ``value'' given by a bounded extension of the operator, which exists and is unique in the cases $p_1<\infty$ and $p_2<\infty,$ and is shown to exist when $p_1=\infty$ or $p_2=\infty.$

\begin{theorem} \label{thm:main3}
If  $\sigma(x,\xi,\eta),$ $x,\xi,\eta\in\re^n,$ is a bilinear symbol
such that
$$ C(\sigma):=\sup_{\genfrac{}{}{0pt}{}{|\beta|\le [\frac{n}{2}]+1}{|\alpha|\le 2(2n+1)}} \sup_{\xi,y\in\rn}  \|\partial_\xi^\alpha \partial_{y}^{\beta} \sigma(y,\xi-\cdot,\cdot) \|_{L^2} <\infty,$$
then $T_\sigma$ maps continuously $L^2 \times L^2$ into $L^2$ with
$$ \|T_\sigma\|_{L^2 \times L^2 \rightarrow L^2} \lesssim  C(\sigma).$$
\end{theorem}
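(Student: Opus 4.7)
The strategy is to reduce the bilinear $L^2\times L^2\to L^2$ estimate to a continuous family of linear pseudodifferential estimates by taking a partial Fourier transform of the symbol in its last frequency variable. First I would change variables $\xi\mapsto\zeta=\xi+\eta$ to recast the operator as
\[
T_\sigma(f,g)(x) = \iint_{\rn\times\rn}\tilde\sigma(x,\zeta,\eta)\,\hat f(\zeta-\eta)\hat g(\eta)\,e^{ix\cdot\zeta}\,d\zeta\,d\eta,
\]
where $\tilde\sigma(x,\zeta,\eta) := \sigma(x,\zeta-\eta,\eta)$. In these variables the hypothesis reads: $\partial_x^\beta\partial_\zeta^\alpha\tilde\sigma\in L^\infty_{x,\zeta}L^2_\eta$ with norm $\lesssim C(\sigma)$ for $|\alpha|\le 2(2n+1)$ and $|\beta|\le[n/2]+1$.

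Next I would Fourier-expand $\tilde\sigma$ in the $\eta$-variable: $\tilde\sigma(x,\zeta,\eta) = (2\pi)^{-n}\int_{\rn} K(x,\zeta,z)\,e^{iz\cdot\eta}\,dz$, where $K=\mathcal{F}_\eta\tilde\sigma$. By Plancherel the hypothesis is equivalent to $\sup_{x,\zeta}\|\partial_x^\beta\partial_\zeta^\alpha K(x,\zeta,\cdot)\|_{L^2_z}\lesssim C(\sigma)$ for the same multi-indices. Substituting the expansion and using the Fourier identity
\[
\int_{\rn} e^{iz\cdot\eta}\hat g(\eta)\hat f(\zeta-\eta)\,d\eta = c_n\,\mathcal{F}\bigl[f\cdot g(\cdot+z)\bigr](\zeta),
\]
the operator decomposes as
\[
T_\sigma(f,g)(x) = c_n\int_{\rn} T_{K(\cdot,\cdot,z)}\bigl(f\cdot g(\cdot+z)\bigr)(x)\,dz,
\]
where $T_{K(\cdot,\cdot,z)}$ is the linear pseudodifferential operator on $\rn$ with symbol $(x,\zeta)\mapsto K(x,\zeta,z)$.

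From here the bilinear bound rests on two ingredients: the linear Calder\'on-Vaillancourt theorem, giving $\|T_{K(\cdot,\cdot,z)}\|_{L^2\to L^2}\le c_n M(z)$ for $M(z)$ a fixed CV-seminorm of $K(\cdot,\cdot,z)$ (a supremum in $(x,\zeta)$ of finitely many $\partial_x^\beta\partial_\zeta^\alpha K$), and the Fubini identity
\[
\int_{\rn}\|f\cdot g(\cdot+z)\|_{L^2}^2\,dz = \|f\|_{L^2}^2\|g\|_{L^2}^2.
\]
Combining these with Minkowski and Cauchy-Schwarz in $z$ yields $\|T_\sigma(f,g)\|_{L^2}\lesssim\bigl(\int M(z)^2\,dz\bigr)^{1/2}\|f\|_{L^2}\|g\|_{L^2}$, so the theorem reduces to proving $\int_{\rn} M(z)^2\,dz\lesssim C(\sigma)^2$.

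This last step is the main obstacle. Plancherel gives control of the mixed norm $L^\infty_{x,\zeta}L^2_z$ of the derivatives of $K$, while the CV-seminorm is of the reverse form $L^2_zL^\infty_{x,\zeta}$, and Minkowski shows these are not comparable without additional structure. The large derivative count $|\alpha|\le 2(2n+1)$ in the hypothesis (together with $|\beta|\le[n/2]+1$) is present precisely to enable this exchange via a Sobolev-type argument in the $(x,\zeta)$-variables: the excess derivatives absorb both the $[n/2]+1$ derivatives required by the linear CV theorem and the further derivatives consumed by Sobolev embedding on $\rtn_{(x,\zeta)}$, producing in the end the desired $L^2_z$-dominating function controlled by $C(\sigma)^2$.
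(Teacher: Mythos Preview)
Your decomposition $T_\sigma(f,g)(x)=c_n\int_{\rn}T_{K(\cdot,\cdot,z)}\bigl(f\,g(\cdot+z)\bigr)(x)\,dz$ is correct, and the reduction to $\int M(z)^2\,dz\lesssim C(\sigma)^2$ is a clean idea. But the last step is a genuine gap, not a detail. You need to pass from $\sup_{x,\zeta}\|\partial_x^\beta\partial_\zeta^\alpha K(x,\zeta,\cdot)\|_{L^2_z}$ to $\bigl\|\sup_{x,\zeta}|\partial_x^\beta\partial_\zeta^\alpha K(x,\zeta,\cdot)|\bigr\|_{L^2_z}$, and no amount of extra $\zeta$-derivatives makes this possible in general: take $K(x,\zeta,z)=\varphi(\zeta-z)$ with $\varphi$ a bump; every $\partial_\zeta^\alpha K$ satisfies $\sup_\zeta\|\partial_\zeta^\alpha K(\zeta,\cdot)\|_{L^2_z}=\|\partial^\alpha\varphi\|_{L^2}<\infty$, yet $\sup_\zeta|K(\zeta,z)|=\|\varphi\|_{L^\infty}$ for every $z$, so $\|\sup_\zeta|K|\|_{L^2_z}=\infty$. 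Global Sobolev embedding on $\rtn_{(x,\zeta)}$ is also unavailable since nothing forces the symbol to be integrable in $(x,\zeta)$, and in the $x$-variable you have exactly $[n/2]+1$ derivatives---precisely what Calder\'on--Vaillancourt already consumes, with none left over.

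The paper takes a different route that avoids this reversed-norm obstruction entirely. It never invokes the linear Calder\'on--Vaillancourt theorem. Instead it first proves the $x$-independent case by Cauchy--Schwarz on the Fourier side (this is where the $L^2_\eta$ norm of $\sigma(\xi-\cdot,\cdot)$ enters directly), then upgrades this to an almost-orthogonality estimate localized to unit cubes in physical space, using integration by parts in $\xi$ to gain decay $(1+|l-j|+|l-k|)^{-N}$; this is where the $2(2n+1)$ $\xi$-derivatives are spent. The $x$-dependence is then handled by freezing $y$ in $\sigma(y,\xi,\eta)$ and applying a \emph{local} Sobolev embedding in $y$ on each unit cube, which costs exactly $[n/2]+1$ derivatives in $x$. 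Summing over the lattice via the off-diagonal decay closes the argument. If you want to rescue your approach, you would need some analogous spatial localization plus almost-orthogonality in place of the failed norm swap.
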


\begin{theorem} \label{thm:main4}
 If $\sigma\in BS^{n(\rho-1)}_{\rho,0},$ $0 \le \rho <\frac{1}{2},$ then there exists $K,\,N\in\na_0$ such that
 \[
\norm{T_\sigma(f,g)}{BMO}\lesssim \norm{\sigma}{K,N}\,\norm{f}{L^{\infty}}\norm{g}{L^{\infty}}, \quad f,\,g\in\mathcal{S}.
\]
\end{theorem}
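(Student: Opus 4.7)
My plan is to adapt C.~Fefferman's proof that $S^{-(1-\rho)n/2}_{\rho,0}$ maps $L^\infty$ to $BMO$ to the bilinear setting, with the $L^2\times L^2\to L^2$ estimate of Theorem~\ref{thm:main3} playing the role of H\"ormander's $L^2$-theorem on $S^0_{\rho,\delta}$ (which fails in the bilinear case by Theorem~\ref{thm:main1}). Fix $f,g\in\mathcal{S}$ and a ball $B=B(x_0,r)\subset\re^n$; the goal is to exhibit a constant $c_B$ such that
\[
\frac{1}{|B|}\int_B |T_\sigma(f,g)(x) - c_B|\, dx \lesssim \|\sigma\|_{K,N}\|f\|_\infty\|g\|_\infty,
\]
with implicit constant independent of $B$, $f$, and $g$.

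The first step is a Littlewood-Paley decomposition $\sigma = \sum_{k\ge 0}\sigma_k$ in the joint frequency variable, with $\sigma_k$ supported in $\{|\xi|+|\eta|\sim 2^k\}$ for $k\ge 1$. From the symbol bounds $|\partial_x^\alpha\partial_\xi^\beta\partial_\eta^\gamma\sigma_k|\lesssim 2^{(n(\rho-1)-\rho(|\beta|+|\gamma|))k}$ one derives (i) the kernel estimate $|K_k(x,y,z)|\lesssim 2^{n(\rho+1)k}(1+2^{\rho k}|y-x|)^{-N}(1+2^{\rho k}|z-x|)^{-N}$ for every $N$, and (ii) the quantity $C(\sigma_k)$ of Theorem~\ref{thm:main3} satisfies $C(\sigma_k)\lesssim 2^{n(\rho-1/2)k}$ (the support of $\zeta\mapsto\sigma_k(y,\xi-\zeta,\zeta)$ in $\zeta$ has measure $\lesssim 2^{nk}$, and the $L^\infty$ bound is $\lesssim 2^{n(\rho-1)k}$, so the $L^2$ bound picks up the factor $2^{nk/2}$). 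In particular, $\sum_k C(\sigma_k)<\infty$ precisely when $\rho<1/2$, making each piece $T_{\sigma_k}$ quantitatively $L^2\times L^2\to L^2$ bounded with geometrically decaying norm.

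The second step is an adaptive spatial cutoff. For each $k$ set $r_k:=\max(r,2^{-\rho k})$, write $f=f_{1,k}+f_{2,k}$, $g=g_{1,k}+g_{2,k}$ with $f_{1,k}$ the restriction to $B(x_0,2r_k)$ (and $g_{1,k}$ likewise), and choose $c_B:=\sum_k T_{\sigma_k}(f_{2,k},g_{2,k})(x_0)$. For each $k$ the quantity $T_{\sigma_k}(f,g)(x)-T_{\sigma_k}(f_{2,k},g_{2,k})(x_0)$ expands as a near-near piece $T_{\sigma_k}(f_{1,k},g_{1,k})(x)$, two mixed pieces $T_{\sigma_k}(f_{1,k},g_{2,k})(x)$ and $T_{\sigma_k}(f_{2,k},g_{1,k})(x)$, and a centered far-far difference $T_{\sigma_k}(f_{2,k},g_{2,k})(x)-T_{\sigma_k}(f_{2,k},g_{2,k})(x_0)$. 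The near-near piece is bounded via Cauchy-Schwarz on $B$ followed by the bilinear $L^2\times L^2\to L^2$ bound from Theorem~\ref{thm:main3}, using $\|f_{1,k}\|_{L^2}\|g_{1,k}\|_{L^2}\lesssim r_k^n\|f\|_\infty\|g\|_\infty$. The mixed pieces are bounded by the kernel decay estimate, which gains arbitrarily large powers of $2^{-\rho k}/r_k$ when one input is restricted to the complement of $B(x_0,2r_k)$. The far-far centered difference is bounded by an analogous $x$-smoothness estimate $|\nabla_x K_k|\lesssim 2^k |K_k|$ combined with $|x-x_0|\le r\le r_k$ and the far-field kernel decay. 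The adaptive choice of $r_k$ is designed so that the resulting bounds, after summation over $k$, are uniform in $r$, via a two-regime analysis depending on whether the kernel scale $2^{-\rho k}$ is smaller or larger than the ball radius~$r$.

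The main obstacle is the bilinear Cauchy-Schwarz scaling: in the linear case the $L^2$ norm of a restriction $f\chi_{2B}$ scales as $|B|^{1/2}\|f\|_\infty$ and is exactly absorbed by the $|B|^{-1/2}$ arising from the mean oscillation, whereas the bilinear Cauchy-Schwarz produces the product $\|f_{1,k}\|_{L^2}\|g_{1,k}\|_{L^2}\sim |B|$ against only one $|B|^{-1/2}$ factor, leaving a spurious $|B|^{1/2}$. The adaptive scale $r_k$ together with the geometric decay $C(\sigma_k)\sim 2^{n(\rho-1/2)k}$ is what absorbs this extra factor, and it is precisely the summability of $2^{n(\rho-1/2)k}$ that forces the restriction $\rho<1/2$. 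The technical lemmata supporting the kernel decay estimates, the computation of $C(\sigma_k)$, the convergence of the series defining $c_B$, and the bookkeeping of the two-regime summation will be postponed to the technical lemmata section, as noted in the introduction.
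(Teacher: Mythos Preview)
Your strategy---adapt Fefferman's $L^\infty\to BMO$ argument with Theorem~\ref{thm:main3} replacing H\"ormander's linear $L^2$-boundedness---matches the paper's. But your near-near estimate does not close for low frequencies. Take $r\le 1$ and any $k$ with $2^k\lesssim r^{-1}$ (in particular $k=0$); then $r_k=2^{-\rho k}$, $C(\sigma_k)\lesssim 2^{n(\rho-1/2)k}$, and your bound reads
\[
\frac{1}{|B|^{1/2}}\|T_{\sigma_k}(f_{1,k},g_{1,k})\|_{L^2}\;\lesssim\; r^{-n/2}\cdot 2^{n(\rho-1/2)k}\cdot (2^{-\rho k})^{n}\;=\;r^{-n/2}\,2^{-nk/2},
\]
which at $k=0$ is $r^{-n/2}$; summing over $0\le k\lesssim\log_2(1/r)$ still gives $\sim r^{-n/2}$. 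The decay $C(\sigma_k)\sim 2^{n(\rho-1/2)k}$ absorbs the bilinear Cauchy--Schwarz loss only for \emph{large} $k$; at small $k$ there is nothing to absorb it, and the adaptive $r_k$ does not help. (A dual failure occurs for $r>1$, where every $k$ falls in your ``high-frequency'' regime and the near-near total is $\sim r^{n/2}$.)

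The paper repairs this with two devices you are missing. First, the single frequency split is at $|\xi|+|\eta|\sim d^{-1}$, and the low-frequency piece is controlled not by any $L^2$ bound but by the mean-value estimate $\frac{1}{|Q|}\int_Q|T_{\sigma_1}(f,g)-(T_{\sigma_1}(f,g))_Q|\le d\,\|\nabla T_{\sigma_1}(f,g)\|_\infty\lesssim 1$ (Lemma~\ref{smallsupp}). Second, for the high-frequency piece the spatial localization is at scale $d^\rho$, not $d$: a Schwartz cutoff $\phi$ with $\supp\widehat\phi\subset\{|z|\le \tfrac18 d^{-\rho}\}$ and $\|\phi\|_{L^2}\lesssim d^{n\rho/2}$ is inserted, the commutator $\phi^2 T_{\sigma_2}(f,g)-T_{\sigma_2}(\phi f,\phi g)$ is bounded in $L^\infty$ (Lemma~\ref{bigsupport1}), and a single application of Theorem~\ref{thm:main3} gives $\|T_{\sigma_2}(\phi f,\phi g)\|_{L^2}\lesssim C(\sigma_2)\,d^{n\rho}\lesssim d^{n/2}$, exactly matching $|Q|^{1/2}$. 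If you want to keep the full Littlewood--Paley decomposition, you must at minimum replace the $L^2$-based near-near bound by a gradient/oscillation bound on the pieces with $2^k\lesssim r^{-1}$.
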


Theorem \ref{thm:main4}, which complements the endpoint $m=n(\rho-1)$ for $p_1=p_2=\infty$ in Theorem~\ref{thm:main2}, can be thought of as a bilinear counterpart (when $0\le \rho<\frac{1}{2}$ and $\delta=0$) to the following linear result proved by C. Fefferman in \cite{fef}.

\begin{thm}[Fefferman~\cite{fef}]\label{thmFef} If $\sigma$ is a symbol in the linear H\"ormander class
$S^{-\frac{n}{2}(1-\rho)}_{\rho, \delta}$ with $0 \leq \delta < \rho < 1$, then $T_\sigma$ maps $L^\infty$ continuously into $BMO$.
\end{thm}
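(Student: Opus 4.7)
\medskip
\noindent\textbf{Proof proposal.}

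\emph{Duality and reduction to atoms.} My plan is to establish the equivalent dual statement $T_\sigma^* : H^1 \to L^1$. The H\"ormander symbolic calculus (available since $\delta<1$) yields that the symbol of the adjoint also lies in $S^m_{\rho,\delta}$, so it suffices to verify $\norm{T_\sigma a}{L^1}\lesssim 1$ uniformly for every $H^1$-atom $a$ with $\supp a\subset B=B(x_0,r)$, $\norm{a}{L^\infty}\le |B|^{-1}$ and $\int a\,dx=0$. Splitting $\int_{\rn}|T_\sigma a|\,dx=\int_{2B}+\int_{(2B)^c}$, the local piece is handled routinely: Cauchy--Schwarz together with H\"ormander's $L^2$-boundedness of $S^0_{\rho,\delta}$ (applicable since $m<0$) gives $\int_{2B}|T_\sigma a|\,dx\le |2B|^{1/2}\norm{T_\sigma a}{L^2}\lesssim |B|^{1/2}\norm{a}{L^2}\lesssim 1$.

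\emph{Why classical Calder\'on--Zygmund fails.} Using the vanishing mean of $a$, the far integral reduces to the H\"ormander-type kernel condition
\[
\sup_{y\in B}\int_{|x-x_0|>2r}|K(x,y)-K(x,x_0)|\,dx \lesssim 1,
\]
where $K$ is the kernel of $T_\sigma$. Integration by parts in $\xi$ produces only the bound $|\nabla_y K(x,y)|\lesssim |x-y|^{-(n+m+1)/\rho}$, and a mean-value estimate then gives something of order $r^{1+n-(n+m+1)/\rho}$, which is \emph{not} bounded as $r\to 0$ precisely when $m=-n(1-\rho)/2$ and $\rho<1$. So this symbol class sits outside the standard CZ framework, and a finer decomposition is required.

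\emph{Second (angular) Littlewood--Paley decomposition.} I would remedy this using the refinement adapted to the scale $\rho$: decompose $\sigma=\sum_{j\ge 0}\sigma_j$ in dyadic frequency annuli $|\xi|\sim 2^j$, and then split each $\sigma_j=\sum_\nu \sigma_{j,\nu}$ using caps on $S^{n-1}$ of aperture $\sim 2^{-j(1-\rho)/2}$ about directions $\omega_\nu$ (with $\sim 2^{j(n-1)(1-\rho)/2}$ caps per annulus). Integration by parts separately in the radial direction $\omega_\nu$ and in the $n-1$ tangential directions shows that each kernel $K_{j,\nu}(x,y)$ is concentrated on an anisotropic plate of radial length $\sim 2^{-j}$ and transverse widths $\sim 2^{-j(1+\rho)/2}$, with rapid decay outside. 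Splitting the $j$-sum at the critical scale $j_0$ defined by $2^{j_0}\sim r^{-1/\rho}$, one applies the mean value theorem on each plate for $j\le j_0$ (now effective because the transverse widths $2^{-j(1+\rho)/2}$ are significantly smaller than $2^{-j\rho}$) and uses a direct $L^1(dx)$ integration of $K_{j,\nu}$ for $j>j_0$. The main obstacle will be to verify that, after summation, the cap multiplicity $2^{j(n-1)(1-\rho)/2}$ is precisely absorbed by the anisotropic kernel decay; the exponent $m=-n(1-\rho)/2$ is exactly the value at which this bookkeeping closes, and the hypothesis $\delta<\rho$ is what guarantees that $x$-derivatives of the symbol do not disturb the balance.
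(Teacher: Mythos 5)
First, a remark on provenance: this statement is Theorem~A, a result quoted from Fefferman \cite{fef}; the paper does not reprove it, but its proof of the bilinear analogue (Theorem~\ref{thm:main4}) follows exactly Fefferman's scheme, so the comparison below is against that.

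Your duality reduction to $H^1\to L^1$, the local estimate via H\"ormander's $L^2$ theorem, and your diagnosis that the standard H\"ormander kernel condition fails as $r\to 0$ are all correct. The gap is in the far-field estimate, for two reasons. \emph{(1) The anisotropic localization is false.} For $K_{j,\nu}$ to concentrate on a plate of radial length $2^{-j}$ and transverse widths $2^{-j(1+\rho)/2}$, each radial $\xi$-derivative of $\sigma_{j,\nu}$ would have to gain $2^{-j}$ and each transverse one $2^{-j(1+\rho)/2}$; but a symbol in $S^m_{\rho,\delta}$ only gains $2^{-j\rho}$ per derivative in \emph{every} direction, and $2^{-j\rho}\gg 2^{-j(1+\rho)/2}\gg 2^{-j}$ when $\rho<1$. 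Hence $K_{j,\nu}$ is spread over the full isotropic ball $|x-y|\lesssim 2^{-j\rho}$, the same set as $K_j$ itself; the cap decomposition gains nothing (Cauchy--Schwarz plus Plancherel shows that summing the $2^{j(n-1)(1-\rho)/2}$ caps in fact \emph{loses} a factor $2^{j(n-1)(1-\rho)/4}$ relative to working with $K_j$ directly). The plate heuristic belongs to Fourier integral operators, where the phase is nonlinear and the symbol is smooth at unit scale; it does not transfer to $S^m_{\rho,\delta}$ with $\rho<1$. \emph{(2) Even repaired, termwise summation cannot close.} In the intermediate range $r^{-1}\lesssim 2^{j}\lesssim r^{-1/\rho}$ each kernel $K_j(\cdot,y)$ carries $L^1$-mass $\sim 2^{j(m+\frac{n}{2}(1-\rho))}=1$ spread over $|x-y|\lesssim 2^{-j\rho}$, which lies mostly outside $2B$, and the atom's cancellation only contributes $\min(1,2^jr)=1$ there; summing the $\sim\log(1/r)$ such $j$ diverges. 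Any proof must therefore exploit orthogonality \emph{across} frequency annuli.

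The missing ingredient is the one the theorem's statement in the paper explicitly flags: the $L^2$-boundedness of $S^0_{\rho,\delta}$ applied to the \emph{entire} high-frequency part of the symbol at once, combined with a near/far splitting at the uncertainty scale $r^{\rho}$ rather than $r$. Concretely, on the atomic side one bounds $\int_{|x-x_0|\lesssim r^{\rho}}|T_{\Sigma_{\mathrm{high}}}a|\,dx\lesssim r^{\rho n/2}\|T_{\Sigma_{\mathrm{high}}}a\|_{L^2}$ and uses that $\Sigma_{\mathrm{high}}$, being of order $m=-\frac{n}{2}(1-\rho)$ and supported in $|\xi|\ge r^{-1}$, has $L^2$ operator norm $\lesssim r^{\frac{n}{2}(1-\rho)}$, which exactly cancels $r^{\rho n/2}\|a\|_{L^2}\lesssim r^{-\frac{n}{2}(1-\rho)}$; beyond $|x-x_0|\sim r^{\rho}$ every dyadic kernel does decay rapidly and termwise summation is harmless. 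This is Fefferman's argument in dual form: on the $BMO$ side one freezes a cube $Q$ of diameter $d$, splits the symbol at $|\xi|+|\eta|\sim d^{-1}$ (low frequencies via a gradient estimate), localizes the input by $\phi$ with $\hat\phi$ supported in $|\zeta|\lesssim d^{-\rho}$ and $\|\phi\|_{L^2}\lesssim d^{\rho n/2}$, and controls the commutator error in $L^\infty$ --- which is precisely the structure of the paper's proof of Theorem~\ref{thm:main4} via Lemmas~\ref{compactsupp}--\ref{bigsupport1} and Theorem~\ref{thm:main3}.
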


The proof of Theorem~\ref{thmFef} uses the fact  that the linear class $S^{0}_{\rho,\delta}$, $0 < \delta < \rho \leq 1$, maps $L^2$ continuously into $L^2$. The bilinear counterpart of this result is false by Theorem~\ref{thm:main1}.
Our proof of Theorem~\ref{thm:main4} relies on Fefferman's ideas and the result given by Theorem~\ref{thm:main3}.

Next, we  present a result concerning boundedness properties of bilinear pseudodifferential operators  on Lebesgue spaces with  indices that satisfy the Sobolev scaling, as opposed to the  H\"older scaling employed in the  previous theorems.

\begin{theorem}\label{thm:bssobolev} Let $0 \leq \delta \leq 1,$ $0 < \rho \leq 1$, $s \in (0,2n)$, and $m_{s}:= 2n(\rho -1) - \rho s$. If $\sigma \in BS^{m}_{\rho,\delta},$ $m\le m_s,$
$1 < p_1, p_2 < \infty$, and $q>0$ is given by  $\frac{1}{q} = \frac{1}{p_1} + \frac{1}{p_1} - \frac{s}{n},$
then there exist  $K,\,N\in\na$ such that
\[
\norm{T_\sigma(f,g)}{L^q}\lesssim \norm{\sigma}{K,N}\,\norm{f}{L^{p_1}}\norm{g}{L^{p_2}}
\]
for all $f\in L^{p_1}$ and $g\in L^{p_2}.$
\end{theorem}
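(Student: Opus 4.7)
The strategy is to reduce the Sobolev-scaling bound to the Lebesgue bound of Theorem~\ref{thm:main2} by factoring a Bessel potential out of $T_\sigma$. Set $\tilde\sigma(x,\xi,\eta):=(1+|\xi+\eta|^2)^{s/2}\sigma(x,\xi,\eta)$, so formally $\sigma(x,\xi,\eta)=(1+|\xi+\eta|^2)^{-s/2}\tilde\sigma(x,\xi,\eta)$. The bilinear symbolic calculus from~\cite{bmnt} then identifies $T_\sigma$ with $(I-\Delta)^{-s/2}\circ T_{\tilde\sigma}$ modulo a remainder operator whose symbol is of strictly lower order and can be handled inductively by the same argument. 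With $\frac{1}{p}=\frac{1}{p_1}+\frac{1}{p_2}$ and $\frac{1}{q}=\frac{1}{p}-\frac{s}{n}$, the Bessel potential $(I-\Delta)^{-s/2}$ maps $L^p(\re^n)$ into $L^q(\re^n)$ by Hardy--Littlewood--Sobolev (with the Hardy-space substitute for $p\le 1$). It therefore suffices to prove $T_{\tilde\sigma}:L^{p_1}\times L^{p_2}\to L^p$.

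To apply Theorem~\ref{thm:main2} I need to locate $\tilde\sigma$ inside a bilinear H\"ormander class. The multiplier $(1+|\xi+\eta|^2)^{s/2}$ has derivatives bounded by $(1+|\xi+\eta|)^{s-|\beta|-|\gamma|}$, which is only compatible with the $(1+|\xi|+|\eta|)$-weights of $BS^{*}_{\rho,\delta}$ on the diagonal region $|\xi+\eta|\sim|\xi|+|\eta|$. I introduce a smooth partition of unity $\Phi+(1-\Phi)=1$ with $\Phi$ supported on $\{|\xi+\eta|\ge c(|\xi|+|\eta|)\}$ and split $\tilde\sigma$ accordingly. On the diagonal piece the Leibniz rule places $\Phi\tilde\sigma\in BS^{m+s}_{\rho,\delta}$; the hypothesis $m\le m_s=2n(\rho-1)-\rho s$ gives $m+s\le 2n(\rho-1)+(1-\rho)s$, which, combined with the Sobolev constraint $s<n/p$ and the assumption that $p_1,p_2>1$ strictly, lies strictly below the threshold $m(p_1,p_2)$ of Theorem~\ref{thm:main2}. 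On the anti-diagonal piece $\xi\approx-\eta$ the multiplier $(1+|\xi+\eta|^2)^{s/2}$ fails to be in any H\"ormander class, but the strong negativity of $m\le m_s$ permits repeated integration by parts in the oscillatory integral defining the kernel of $T_{(1-\Phi)\tilde\sigma}$ --- the phase $(x-y)\cdot\xi+(x-z)\cdot\eta$ has non-vanishing gradient in each of $\xi$ and $\eta$ away from $y,z=x$ --- and the resulting pointwise kernel decay puts this piece into the scope of the bilinear Calder\'on--Zygmund theory, which delivers the remaining $L^{p_1}\times L^{p_2}\to L^p$ bound.

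The main obstacle I anticipate is precisely the anti-diagonal region: the Bessel multiplier on the output frequency $\xi+\eta$ lies in no $(1+|\xi|+|\eta|)$-based symbol class, so $(I-\Delta)^{s/2}\circ T_\sigma$ cannot be realized globally as a single bilinear pseudodifferential operator of order $m+s$. This is exactly why the hypothesis $m\le m_s=2n(\rho-1)-\rho s$ is forced to be stronger than the naive $m\le m(p_1,p_2)-s$: the extra $n(\rho-1)$ beyond the Fefferman-type linear threshold $n(\rho-1)-\rho s$ supplies the decay consumed by the anti-diagonal kernel estimates, and the same quantity governs the inductive absorption of the remainder term produced by the symbolic calculus in the first step.
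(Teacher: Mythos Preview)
The paper's proof takes a completely different and much shorter route. It observes that the hypothesis $m\le m_s=2n(\rho-1)-\rho s$ gives $(m+2n)/\rho\le 2n-s$, so the kernel estimate of Theorem~\ref{thm:kernelestimates}(v) yields $|\mathcal K(x,y,z)|\lesssim(|x-y|+|x-z|)^{-(2n-s)}$ and hence the pointwise domination $|T_\sigma(f,g)|\lesssim\mathcal I_s(|f|,|g|)$, where $\mathcal I_s$ is the Kenig--Stein bilinear fractional integral. The known bound $\mathcal I_s:L^{p_1}\times L^{p_2}\to L^q$ (obtained via $\mathcal I_s(f,g)\le I_{s_1}f\cdot I_{s_2}g$ and the linear Hardy--Littlewood--Sobolev inequality) then finishes the argument in one stroke.

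Your approach has a genuine gap in the diagonal piece. The claim that $\Phi\tilde\sigma\in BS^{m+s}_{\rho,\delta}$ falls under Theorem~\ref{thm:main2}, i.e.\ that $m+s<m(p_1,p_2)$, is false in general. Take $\rho=\tfrac12$, $p_1=p_2=\tfrac43$ (so $p=\tfrac23$), and $s=n$ (so $q=2$, and your constraint $s<n/p=\tfrac{3n}{2}$ is satisfied). Then
\[
m_s+s=2n(\rho-1)+(1-\rho)s=-n+\tfrac n2=-\tfrac n2,\qquad
m(p_1,p_2)=n(\rho-1)\bigl(\tfrac34+\tfrac12\bigr)=-\tfrac{5n}{8},
\]
and $-\tfrac n2>-\tfrac{5n}{8}$, so Theorem~\ref{thm:main2} does not apply at $m=m_s$. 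The inequality you assert simply does not follow from the stated constraints. Note also that your anti-diagonal argument---integrating by parts in the oscillatory integral to extract kernel decay---is, once made precise, exactly the mechanism behind Theorem~\ref{thm:kernelestimates}(v), and that estimate already applies to the \emph{full} symbol $\sigma$ with no diagonal/anti-diagonal splitting. So the detour through $\tilde\sigma$, the Bessel potential, and the symbolic-calculus remainder is both unnecessary and, on the diagonal region, insufficient.
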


We end this section by briefly featuring some remarks, motivations and applications in the next three subsections.

\subsection{The operator norm, the number of derivatives, and complex interpolation of the classes of symbols}\label{lessderivatives}
Theorems~\ref{thm:main2} and \ref{thm:bssobolev} state that the operator norm of $T_\sigma$,  as a bounded operator from a product of Lebesgue spaces into another Lebesgue space, is controlled by $\|\sigma\|_{K,N}$ for some nonnegative integers $K$ and $N$.
Even though this is a consequence of the proof provided in each case, it can be shown to be  a necessary condition. More precisely,

\begin{lemma}\label{lemma:opnorm} Let $0< p\le \infty,$ $1\le p_1,p_2<\infty,$ $0\le \delta,\,\rho\le 1$ and
suppose $T_\sigma$ is bounded from $L^{p_1}\times L^{p_2}$ into
$L^p$ for all $\sigma\in BS^m_{\rho,\delta}.$ Then there exist
$K,\,N\in\na_0$  such that
\[
\|T_\sigma\|\lesssim\,\|\sigma\|_{K,N}\quad \text{for all } \sigma\in BS^m_{\rho,\delta}.
\]
\end{lemma}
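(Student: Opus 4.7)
The plan is to apply the closed graph theorem to the linear map
\[
\Phi : BS^m_{\rho,\delta} \longrightarrow \mathcal{B}(L^{p_1}\times L^{p_2},L^p), \qquad \sigma \longmapsto T_\sigma,
\]
where the target is the space of bounded bilinear operators equipped with the (quasi-)operator norm $\|\cdot\|$. The key structural observation is that $BS^m_{\rho,\delta}$ is a Fr\'echet space under the countable separating family of seminorms $\{\|\cdot\|_{K,N}\}_{K,N\in\na_0}$ (as stated in Section~2), and the target is an F-space: Banach when $p\ge 1$, and a quasi-Banach (hence F-space under $d(S,T)=\|S-T\|^{p}$) when $0<p<1$. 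Thus the F-space version of the closed graph theorem applies, and it suffices to verify that the graph of $\Phi$ is closed.

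For closedness, I would take a sequence $\sigma_n\to\sigma$ in $BS^m_{\rho,\delta}$ with $T_{\sigma_n}\to T$ in $\mathcal{B}$, and show $T=T_\sigma$. Fix $f,g\in\mathcal{S}$. Convergence $\|\sigma_n-\sigma\|_{0,0}\to 0$ yields
\[
|\sigma_n(x,\xi,\eta)-\sigma(x,\xi,\eta)|\le \|\sigma_n-\sigma\|_{0,0}\,(1+|\xi|+|\eta|)^{m}\longrightarrow 0
\]
uniformly in $x$, while $\hat f(\xi)\hat g(\eta)$ is Schwartz in $(\xi,\eta)$. The dominated convergence theorem then gives $T_{\sigma_n}(f,g)(x)\to T_\sigma(f,g)(x)$ for every $x$. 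On the other hand, the hypothesis $T_{\sigma_n}\to T$ in operator quasi-norm forces $T_{\sigma_n}(f,g)\to T(f,g)$ in $L^p$, and hence a.e. along a subsequence. Comparing limits gives $T(f,g)=T_\sigma(f,g)$ a.e. for all $f,g\in\mathcal{S}$. Since $1\le p_1,p_2<\infty$, the Schwartz class is dense in $L^{p_1}$ and $L^{p_2}$, so continuity of both $T$ and $T_\sigma$ propagates the identity to all of $L^{p_1}\times L^{p_2}$.

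With the graph closed, the closed graph theorem yields continuity of $\Phi$ at the origin: there exist a neighborhood of $0$ in $BS^m_{\rho,\delta}$ of the form $\{\sigma:\|\sigma\|_{K,N}<\varepsilon\}$ on which $\|\Phi(\sigma)\|\le 1$, which by linearity (or, in the quasi-Banach case, homogeneity of degree $1$ in $\sigma$) upgrades to a global bound $\|T_\sigma\|\lesssim\|\sigma\|_{K,N}$.

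\textbf{Main obstacle.} There is no deep difficulty; the only genuine point to check is the pointwise convergence $T_{\sigma_n}(f,g)\to T_\sigma(f,g)$ when $\sigma_n\to\sigma$ only in the Fr\'echet topology, and this is handled by the crude bound above together with the rapid decay of $\hat f\hat g$. The quasi-Banach case $0<p<1$ is routine once one notes that $L^p$-convergence still implies a.e.\ convergence along a subsequence and the closed graph theorem is valid for F-spaces.
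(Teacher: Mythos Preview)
Your proposal is correct and follows essentially the same approach as the paper: both apply the closed graph theorem to the linear map $\sigma\mapsto T_\sigma$ from the Fr\'echet space $BS^m_{\rho,\delta}$ into the (quasi-)Banach space of bounded bilinear operators, verifying closedness of the graph on Schwartz inputs and extending by density. Your write-up in fact supplies more detail than the paper (the dominated convergence step and the a.e.\ subsequence argument), which the paper simply summarizes as ``it easily follows.''
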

Indeed, Lemma \ref{lemma:opnorm} is a consequence of the Closed Graph
Theorem. Consider in  $BS^m_{\rho,\delta}$ the topology induced by the family
of norms $\{\|\cdot\|_{K,N}\}_{K,N\in \na_0},$ as defined
above, which turns $BS^m_{\rho,\delta}$  into a Fr\'echet
space. If $T_\sigma$ is bounded from $L^{p_1}\times L^{p_2}$ into
$L^p$ for all $\sigma\in BS^m_{\rho,\delta}$ we can define the
linear transformation
\[
U:BS^m_{\rho,\delta}\to \mathcal{L}(L^{p_1}\times
L^{p_2},L^p),\qquad U(\sigma)=T_{\sigma},
\]
where  $\mathcal{L}(L^{p_1}\times L^{p_2},L^p)$ denotes the quasi-Banach
space  (Banach space if $p\ge 1$)  of all bilinear bounded operators from $L^{p_1}\times L^{p_2}$
into $L^p$ endowed with the operator quasi-norm (norm if $p\ge 1$). If
 $\{(\sigma_k,T_{\sigma_k})\}_{k\in\na}$ is a sequence in the graph of $U$ that converges to
$(\sigma,T),$  for some $\sigma\in BS^m_{\rho,\delta}$ and $T\in
\mathcal{L}(L^{p_1}\times L^{p_2},L^p),$ then it easily follows that
$T(f,g)=T_\sigma(f,g)$ for any $f,g\in \mathcal{S}(\re^n).$ Since
$T_\sigma$ and $T$ are bilinear bounded operators from $L^{p_1}\times
L^{p_2}$ into $L^p$, by density, we obtain that $T=T_\sigma$. Then
the graph of $U$ is closed and therefore, by the closed Graph
Theorem,  $U$ is continuous and the desired result follows.

\medskip

In regards to the number of derivates required for the symbols, we remark  that the following modified versions of the bilinear
H\"ormander classes can be considered: For $K,\,N\in\na_0,$
\[
\bsm:=\{\sigma(x,\xi,\eta)\in C^{K,N}(\re^{3n}):  \|\sigma\|_{K,N}<\infty \},
\]
where $C^{K,N}(\re^{3n})$ means derivatives up to order $K$ in $x$ and up to order $N$ in $\xi$ and $\eta.$
Then $\bsm$ is a Banach space with norm $\|\cdot\|_{K,N}$ that contains $\bs$ as a dense subset and therefore the results of Theorem~\ref{thm:main2},  \ref{thm:main4}, and \ref{thm:bssobolev} remain true if $BS^m_{\rho,\delta}$ is replaced with $BS^m_{\rho,\delta,K,N}$ for appropriate values of $K,\,N\in\na_0,$ possibly depending on $m,$ $\rho,$ and $\delta.$ We will not pursue in this paper the question regarding the minimum number of derivatives needed to achieve the results presented, though some estimates can be inferred from the proofs.

\medskip

We close this subsection with a result on the complex interpolation of the classes $BS^m_{\rho,\rho,N,N}$ which will be useful in the proof of Theorem~\ref{thm:main2}.

\begin{lemma} \label{interpolation} If $m_0,m_1\in\re,$ $0\le \rho<1$ and $m=\theta \,m_0+(1-\theta) \,m_1$ for some $\theta\in (0,1)$ then
\[
\left(BS^{m_0}_{\rho,\rho,N,N}, BS^{m_1}_{\rho,\rho,N,N}\right)_{[\theta]}=BS^{m}_{\rho,\rho,N,N}.
\]
\end{lemma}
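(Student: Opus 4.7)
The plan is to apply the classical ``multiplier'' construction for complex interpolation of H\"ormander-type symbol scales, adapted to the bilinear setting. The principal gadget is the family of purely frequency-dependent symbols
\[
M^{z}(\xi,\eta) := \bigl(1+|\xi|^2+|\eta|^2\bigr)^{a(z)/2}, \qquad z\in\mathbb{C},
\]
where $a$ is an affine function of $z$ chosen so that $a(\theta)=0$, $\operatorname{Re} a(it)=m_1-m$, and $\operatorname{Re} a(1+it)=m_0-m$ (consistent with the convention $m=\theta m_0+(1-\theta)m_1$ of the lemma). A direct Leibniz calculation shows $M^z\in BS^{\operatorname{Re}a(z)}_{1,0,0,N}\subset BS^{\operatorname{Re}a(z)}_{\rho,\rho,0,N}$ with seminorms polynomial in $|\operatorname{Im}z|$; hence, since only $\xi,\eta$ derivatives act on $M^z$ and $\rho\le 1$, one obtains the product rule
\[
\tau\in BS^{\mu}_{\rho,\rho,N,N} \;\Longrightarrow\; M^z\tau\in BS^{\mu+\operatorname{Re}a(z)}_{\rho,\rho,N,N},
\]
with norm $\le P(|\operatorname{Im}z|)\,\|\tau\|_{N,N}$ for some polynomial $P$ of degree $\le 2N$.

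For the inclusion $BS^{m}_{\rho,\rho,N,N}\subset \bigl(BS^{m_0}_{\rho,\rho,N,N},BS^{m_1}_{\rho,\rho,N,N}\bigr)_{[\theta]}$, given $\sigma\in BS^m_{\rho,\rho,N,N}$ I would set
\[
F(z):= e^{\varepsilon(z-\theta)^2}\,M^z\,\sigma,\qquad \varepsilon>0.
\]
The Gaussian factor absorbs the polynomial growth of the symbol seminorms in $|\operatorname{Im}z|$, so the routine checks required by the complex interpolation method---$F$ analytic on the open strip with values in $BS^{m_0}_{\rho,\rho,N,N}+BS^{m_1}_{\rho,\rho,N,N}$, continuous on the closed strip, with uniformly bounded boundary traces in the respective endpoint norms---all go through. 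Since $F(\theta)=\sigma$, this places $\sigma$ in the interpolation space with norm $\lesssim\|\sigma\|_{N,N}$.

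For the reverse inclusion I would argue pointwise via Phragm\'en--Lindel\"of. Fix an admissible analytic family $F$ with $F(\theta)=\sigma$ in the interpolation space. For each $(x,\xi,\eta)\in\re^{3n}$ and each triple of multi-indices with $|\alpha|,|\beta|,|\gamma|\le N$, consider the scalar analytic function
\[
h(z):=e^{\varepsilon(z-\theta)^2}\,M^{-z}(\xi,\eta)\,(1+|\xi|+|\eta|)^{-\rho|\alpha|+\rho(|\beta|+|\gamma|)}\,\partial_x^{\alpha}\partial_\xi^{\beta}\partial_\eta^{\gamma}F(z)(x,\xi,\eta).
\]
By the choice of $a$, the assumptions $F(it)\in BS^{m_0}_{\rho,\rho,N,N}$ and $F(1+it)\in BS^{m_1}_{\rho,\rho,N,N}$ translate into uniform pointwise bounds on $|h|$ along the two edges of the strip in terms of the Fr\'echet seminorms of $F$ there. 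The three-lines lemma then bounds $|h(\theta)|$ by the admissible norm of $F$, and taking suprema over $(x,\xi,\eta)$ and the multi-indices, then infimum over admissible $F$, yields $\sigma\in BS^m_{\rho,\rho,N,N}$ with $\|\sigma\|_{N,N}\lesssim \|\sigma\|_{[\theta]}$.

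The main obstacle is the careful bookkeeping of the multiplication law $M^z\tau$ and the polynomial dependence of its symbol seminorms on $|\operatorname{Im}z|$: derivatives of $M^z$ generate Fa\`a di Bruno--type sums in which factors of $a(z)$ accumulate, and the key observation is that these are absorbed by the Gaussian $e^{\varepsilon(z-\theta)^2}$. The hypothesis $\rho<1$ enters only through $\rho\le 1$, which is what guarantees that $M^z$, whose frequency derivatives lose order $1$, lies in the bilinear H\"ormander class with parameter $\rho$, so that the product rule behaves as required.
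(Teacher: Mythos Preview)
Your argument is correct and is precisely the approach the paper has in mind: the paper gives no self-contained proof but simply cites P\"aiv\"arinta--Somersalo \cite{PS}, whose Lemma~3.1 establishes the linear analogue via exactly this multiplier construction $(1+|\xi|^2)^{a(z)/2}$ together with the three-lines lemma, and your sketch is the straightforward bilinear adaptation with weight $(1+|\xi|^2+|\eta|^2)^{a(z)/2}$. One minor bookkeeping slip: with your choice of $a$ the forward construction puts $F(it)\in BS^{m_1}_{\rho,\rho,N,N}$, not $BS^{m_0}_{\rho,\rho,N,N}$ as you write in the reverse-inclusion paragraph, so relabel accordingly to match the lemma's convention $m=\theta m_0+(1-\theta)m_1$.
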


Indeed, the lemma follows  using the same arguments as in  the work of  P\"aiv\"arinta-Somersalo \cite[Lemma 3.1]{PS}, where  the analogous
result for the linear H\"ormander classes is proved.

\subsection{Leibniz-type rules}
In terms of  applications of the bilinear $L^p$-theory for the class $BS^{m}_{\rho,\delta}$, the results in this paper  allow for enriched versions of the fractional Leibniz rule \begin{equation} \label{KaPoIneq}
\norm{fg}{W^{s,p}} \leq C \left(\norm{f}{W^{s,p_1}} \norm{g}{L^{p_2}} + \norm{f}{L^{p_1}} \norm{g}{W^{s,p_2}}  \right),
\end{equation}
where $s\ge 0$, $\frac{1}{p}=\frac{1}{p_1}+\frac{1}{p_2}$  and $1<p_1,\,p_2<\infty$ (see Kato-Ponce~\cite{KaPo}, Christ-Weinstein~\cite{CWein}, and Kenig-Ponce-Vega~\cite{KPVe}).

Inequalities of the type \eqref{KaPoIneq} for pseudodifferential operators $T_\sigma(f,g)$ instead of the product $fg$ ($\sigma\equiv 1$) can be  easily obtained following what is by now a well-known procedure that uses results going back to Coifman and Meyer and has become part of the folklore in the subject. The idea, as already used in \cite{KaPo},  is to (smoothly) split the symbol into frequency regions where the derivatives can be distributed among the functions.  See also  Semmes \cite{semmes} and Gulisashvili-Kon \cite{GuKo} where both homogeneous and inhomogeneous derivatives  were  considered in similar fashion.

Consider $\sigma \in BS^m_{\rho,\delta}$  and $\phi\in C^\infty(\re)$ such that $0 \leq \phi \leq 1$, $\supp(\phi) \subset [-2, 2]$ and $\phi(r) + \phi(1/r) = 1$ on $[0,\infty)$. For $s>0,$ the symbols $\sigma_1$ and $\sigma_2$ given by
\begin{align*}
\sigma_1(x,\xi, \eta)= \sigma(x,\xi, \eta) \phi\left(\frac{1+|\eta|^2}{1+|\xi|^2} \right) (1+|\xi|^2)^{-(m+s))/2},\\
\sigma_2(x,\xi, \eta)= \sigma(x,\xi, \eta) \phi\left(\frac{1+|\xi|^2}{1+|\eta|^2} \right) (1+|\eta|^2)^{-(m+s))/2},
\end{align*}
satisfy $\sigma_1, \sigma_2 \in BS^{-s}_{\rho,\delta}$, and the corresponding operators $T_\sigma$, $T_{\sigma_1}$, and $T_{\sigma_2}$ are related through
$$
T_\sigma(f,g) = T_{\sigma_1}(J^{m+s} f, g) + T_{\sigma_2}(f,J^{m+s} g),
$$
where $J^{m+s}$ denotes the linear Fourier multiplier with symbol $(1+|\cdot|^2)^{(m+s)/2}$. Thus, the boundedness properties on Lebesgue spaces of bilinear pseudodifferential operators given in Theorems \ref{thm:main2} and \ref{thm:bssobolev} imply
\begin{equation}\label{sigmaKaPo}
\norm{T_\sigma(f,g)}{L^p} \leq C \left(\norm{f}{W^{m+s,p_1}} \norm{g}{L^{p_2}} + \norm{f}{L^{q_1}} \norm{g}{W^{m+s,q_2}}  \right), \quad f, g \in \mathcal{S},
\end{equation}
for appropriate values of $p_1,\,p_2,\,q_1,\,q_2$ and $s$. We refer the reader to  Bernicot et al \cite{bmmn} for additional  Leibniz-type rules.

In the same spirit, using the functional rule
$$ \partial_{x_i} T_\sigma(f,g) = T_{\partial_{x_i} \sigma}(f,g) +
T_{\sigma}(\partial_{x_i} f,g)+T_{\sigma}(f,\partial_{x_i} g),$$ the
fact that $\sigma\in BS^m_{\rho,\delta}$ yields $\partial_{x_i}
\sigma \in BS^{m+\delta}_{\rho,\delta}$,  and bilinear complex
interpolation, Theorem~\ref{thm:main2} and
Theorem~\ref{thm:bssobolev} imply the following corollaries:

\begin{corollary} \label{cor:main1} Let $0\le \delta\le \rho\le 1,$
$\delta<1,$ $1\le p_1,p_2\le \infty,$ $p$ given by
$\frac{1}{p}=\frac{1}{p_1}+\frac{1}{p_2}$ and $m(p_1,p_2)$ as in
Theorem~\ref{thm:main2}. If $\sigma\in BS_{\rho,\delta}^m,$
$m<m(p_1,p_2)-k \delta$ for some nonnegative integer $k,$ and
$r\in[0,k],$ then there exists $K,\,N\in\na_0$ such that
$$\norm{T_\sigma(f,g)}{W^{r,p}}\lesssim
\norm{\sigma}{K,N}\norm{f}{W^{r,p_1}}\norm{g}{W^{r,p_2}}, $$ for all
$f\in W^{r,p_1}$ and $g\in W^{r,p_2}.$
\end{corollary}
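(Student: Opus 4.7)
The plan is to establish the estimate first for integer values $r\in\{0,1,\ldots,k\}$ by direct computation, and then to extend to all $r\in[0,k]$ via bilinear complex interpolation of $T_\sigma$ acting on the Sobolev scale.

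For an integer $r_0$ with $0\le r_0\le k$, I would iterate the identity
\[
\partial_{x_i} T_\sigma(f,g) = T_{\partial_{x_i}\sigma}(f,g) + T_\sigma(\partial_{x_i}f,g) + T_\sigma(f,\partial_{x_i}g)
\]
stated before the corollary to express, for every multi-index $\alpha$ with $|\alpha|\le r_0$,
\[
\partial_x^\alpha T_\sigma(f,g) = \sum_{\alpha_1+\alpha_2+\alpha_3=\alpha} c_{\alpha_1,\alpha_2,\alpha_3}\, T_{\partial_x^{\alpha_1}\sigma}(\partial_x^{\alpha_2}f,\partial_x^{\alpha_3}g).
\]
Since $\sigma\in BS^m_{\rho,\delta}$ forces $\partial_x^{\alpha_1}\sigma\in BS^{m+|\alpha_1|\delta}_{\rho,\delta}$ with seminorms controlled by $\|\sigma\|_{K+|\alpha_1|,N}$, and since $|\alpha_1|\le r_0\le k$ yields $m+|\alpha_1|\delta<m(p_1,p_2)$, Theorem~\ref{thm:main2} applies to each summand. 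Summing over $|\alpha|\le r_0$ and using $\|\partial_x^{\alpha_j}f\|_{L^{p_j}}\le \|f\|_{W^{r_0,p_j}}$ produces
\[
\norm{T_\sigma(f,g)}{W^{r_0,p}}\lesssim \norm{\sigma}{K,N}\,\norm{f}{W^{r_0,p_1}}\,\norm{g}{W^{r_0,p_2}}
\]
for suitable $K,N$ depending on $k$.

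To fill in non-integer $r\in(0,k)$, I would invoke bilinear complex interpolation applied to the fixed operator $T_\sigma$ with the two endpoints $r_0=0$ and $r_0=k$ just established. Combined with the standard identification $[L^{p_i},W^{k,p_i}]_{[\theta]}=W^{\theta k,p_i}$ (and analogously for $L^p,W^{k,p}$) coming from the Bessel-potential definition of Sobolev spaces, choosing $\theta = r/k$ gives the claimed $W^{r,p_1}\times W^{r,p_2}\to W^{r,p}$ bound.

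The main technical obstacle will be the endpoint indices $p_1=\infty$ or $p_2=\infty$, where $W^{r,\infty}$ must be interpreted through the Bessel potential $J^r$ and bilinear complex interpolation involving $L^\infty$ must be handled with care (for example, by replacing $L^\infty$ with a suitable BMO endpoint or by an approximation argument), together with the non-Banach regime $p<1$, where one must appeal to the version of bilinear complex interpolation valid on quasi-Banach lattices. Neither of these introduces conceptually new input beyond what is available in the literature; the substance of the argument is the Leibniz-type identity, the symbol shift $\sigma\mapsto\partial_x^\alpha\sigma$ raising the order by $|\alpha|\delta$ (which is exactly compensated by the slack $k\delta$ built into the hypothesis $m<m(p_1,p_2)-k\delta$), and Theorem~\ref{thm:main2}.
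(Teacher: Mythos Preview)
Your proposal is correct and follows essentially the same approach as the paper: the paper's proof is precisely the sketch given just before the corollary, namely iterate the functional rule $\partial_{x_i} T_\sigma(f,g) = T_{\partial_{x_i}\sigma}(f,g) + T_\sigma(\partial_{x_i}f,g) + T_\sigma(f,\partial_{x_i}g)$ together with $\partial_{x_i}\sigma\in BS^{m+\delta}_{\rho,\delta}$ to obtain the integer endpoints from Theorem~\ref{thm:main2}, and then apply bilinear complex interpolation to cover $r\in[0,k]$. The endpoint and non-Banach caveats you flag are left implicit in the paper as well.
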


\begin{corollary}\label{coro:bssobolev} Let $0 \leq \delta \leq 1,$ $0 < \rho \leq 1$,
 $s \in (0,2n)$,  $m_{s}= 2n(\rho -1) - \rho$ as in
 Theorem~\ref{thm:bssobolev},
$1 < p_1, p_2 < \infty$, and $q>0$ such that $\frac{1}{q} =
\frac{1}{p_1} + \frac{1}{p_1} - \frac{s}{n}.$ . If $\sigma \in
BS^{m}_{\rho,\delta},$ $m\le m_s-k\delta,$ for some nonnegative
integer $k,$ and $r\in[0,k],$ then there exists $K,\,N\in\na_0$ such
that
$$\norm{T_\sigma(f,g)}{W^{r,q}}\lesssim
\norm{\sigma}{K,N}\norm{f}{W^{r,p_1}}\norm{g}{W^{r,p_2}}, $$ for all
$f\in W^{r,p_1}$ and $g\in W^{r,p_2}.$
\end{corollary}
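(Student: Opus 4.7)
The plan is to mimic, in the Sobolev-scaling setting, exactly the strategy used to derive Corollary~\ref{cor:main1} from Theorem~\ref{thm:main2}: first handle the integer case $r=k$ by differentiating, then upgrade the general $r\in[0,k]$ by bilinear complex interpolation. The starting point, $r=0$, is Theorem~\ref{thm:bssobolev} itself, since $m\le m_s-k\delta\le m_s$.

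For the integer endpoint $r=k$, I would iterate the product-type identity
\[
\partial_{x_i} T_\sigma(f,g) \;=\; T_{\partial_{x_i} \sigma}(f,g) + T_\sigma(\partial_{x_i} f,g) + T_\sigma(f,\partial_{x_i} g)
\]
a total of $k$ times. This writes $\partial^\mu T_\sigma(f,g)$, for any multi-index $\mu$ with $|\mu|\le k$, as a finite sum of terms of the form $T_{\partial_x^{\alpha}\sigma}(\partial^{\beta} f,\partial^{\gamma}g)$ with $|\alpha|+|\beta|+|\gamma|=|\mu|\le k$. Since $\sigma\in BS^{m}_{\rho,\delta}$ implies $\partial_x^{\alpha}\sigma\in BS^{m+\delta|\alpha|}_{\rho,\delta}$, and the hypothesis $m\le m_s-k\delta$ gives $m+\delta|\alpha|\le m_s-\delta(k-|\alpha|)\le m_s$, each resulting symbol lies in a class to which Theorem~\ref{thm:bssobolev} applies with the same exponents $p_1,p_2,q$. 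Summing the resulting estimates over $|\mu|\le k$ and over the admissible $(\alpha,\beta,\gamma)$ yields
\[
\norm{T_\sigma(f,g)}{W^{k,q}}\;\lesssim\; \norm{\sigma}{K,N}\norm{f}{W^{k,p_1}}\norm{g}{W^{k,p_2}}
\]
for suitable $K,N$, the control on $\norm{\partial_x^{\alpha}\sigma}{K',N'}$ in the class $BS^{m+\delta|\alpha|}_{\rho,\delta}$ being bounded by $\norm{\sigma}{K,N}$ for some $K,N$ depending on $k$.

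Once $r=0$ and $r=k$ are settled, the remaining $r\in(0,k)$ are obtained by bilinear complex interpolation with $\theta=r/k$: the bilinear operator $T_\sigma$ is bounded from $L^{p_1}\times L^{p_2}$ to $L^{q}$ and from $W^{k,p_1}\times W^{k,p_2}$ to $W^{k,q}$, and $W^{r,p}$ is (up to equivalent norms) the complex interpolation space $[L^p,W^{k,p}]_\theta$ for $1<p<\infty$. Applying the bilinear version of the Calder\'on complex interpolation method then produces the desired $W^{r,p_1}\times W^{r,p_2}\to W^{r,q}$ bound, with operator norm still controlled by a seminorm $\norm{\sigma}{K,N}$.

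The main subtlety I expect is the interpolation step when $q<1$. If $s$ is large enough, the target $L^q$ falls in the quasi-Banach range, and one must invoke a version of bilinear complex interpolation valid there (e.g., working with analytic families of operators valued in quasi-Banach Sobolev spaces, as in the proofs behind Theorem~\ref{thm:main2}(ii)). The integer-derivative step is essentially bookkeeping; the delicate point is to ensure that the interpolation identification $[L^p,W^{k,p}]_\theta=W^{r,p}$ with $r=\theta k$ remains available at the $L^q$ end and that the bilinear interpolation theorem applies to the pair $(T_\sigma,T_\sigma)$ regarded as acting between these analytic scales. Once this is secured, the corollary follows by exactly the same template used for Corollary~\ref{cor:main1}.
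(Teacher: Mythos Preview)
Your proposal is correct and follows essentially the same approach as the paper: the paper states that Corollary~\ref{coro:bssobolev} follows from Theorem~\ref{thm:bssobolev} by using the functional rule $\partial_{x_i} T_\sigma(f,g) = T_{\partial_{x_i}\sigma}(f,g) + T_\sigma(\partial_{x_i} f,g) + T_\sigma(f,\partial_{x_i} g)$, the fact that $\partial_{x_i}\sigma\in BS^{m+\delta}_{\rho,\delta}$, and bilinear complex interpolation. Your concern about the quasi-Banach range $q<1$ is a legitimate technical point that the paper does not explicitly address.
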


\subsection{Applications to the scattering of   PDEs}

Consider the system of partial differential equations for $u=u(t,x),$ $v=v(t,x)$, and $w=w(t,x),$ $t\in\re,$ $x\in\re^n,$
\begin{equation}
\left\{ \begin{array}{ll}  \partial_t u + a(D) u =vw, &u(0,x)=0,\label{eq:1}\\
 \partial_t v + b(D) v = 0, &v(0,x)=f(x),\\
  \partial_t w + c(D) w = 0, &w(0,x) = g(x).
 \end{array} \right.
\end{equation}
where $a(D),$  $b(D)$ and $c(D)$ are linear multipliers with symbols $a(\xi),$ $b(\xi)$ and  $c(\xi),$ $\xi\in\re^n,$ respectively.
Then, formally,
$$ v(t,x)= \int_{\re^n} e^{-t
b(\xi)} \widehat{f}(\xi)\, e^{ix\cdot \xi}\,d\xi,\quad w(t,x)= \int_{\re^n}  e^{-t
c(\eta)} \widehat{g}(\eta)\,e^{ix\cdot \eta}\,d\eta,$$
 and
$$ v(t,x)w(t,x) = \int_{\rtn} e^{-t
(b(\xi)+c(\eta))} \widehat{f}(\xi) \widehat{g}(\eta)\, e^{ix\cdot(\xi+\eta)} \,d\xi\,d\eta.$$
Another  formal computation then  yields
$$ u(t,x)= (e^{-ta(D)}F(t,\cdot))(x),$$
where
\begin{align*}
F(t,x) & = \int_0^t e^{sa(D)}(v(s,\cdot) w(s,\cdot))(x) ds \\
& = \int_{\rtn} \left(\int_0^t e^{s
(a(\xi+\eta)-b(\xi)-c(\eta))} ds \right) \widehat{f}(\xi)
\widehat{g}(\eta) \, e^{ix\cdot(\xi+\eta)} \,d\xi\,d\eta.
\end{align*}
Therefore, if the phase function $\lambda(\xi,\eta):=a(\xi+\eta)-b(\xi)-c(\eta)$
does not vanish,
$$ F(t,x) = T_{\frac{e^{t\lambda}-1}{\lambda}}(f,g)(x).$$

As a consequence, assuming that $\lambda<0, $ the solution $u$ of  (\ref{eq:1}) scatters in the Sobolev
space $W^{r,p}$ if
$$ \lim_{t\rightarrow \infty} T_{\frac{e^{t\lambda}-1}{\lambda}}(f,g) =
T_{-\lambda^{-1}}(f,g) \in W^{r,p}.$$
According to  Corollary \ref{cor:main1},  $T_{-\lambda^{-1}}$ is a bounded operator on Sobolev spaces if $-\lambda^{-1}$ belongs to
$BS^{m}_{\rho,\delta}$ for suitable exponents.

As an example consider  $b(D)=1-\Delta$  and $c(D)=|D|$.  Then for $a(D)=0$, we get
 $$-\lambda(\xi,\eta)^{-1}= (1+|\xi|^2+|\eta|)^{-1}$$
 and
 $$\lambda(\xi,\eta)^{-1}\varphi(\xi,\eta)\in BS^{-1}_{\frac{1}{2},0},$$
 for any smooth function $\varphi$ such that $\varphi=1$ away from the set $\{(\xi,\eta):\eta=0\}$. In the case that  $a(D)=\Delta$, we get
  $$-\lambda(\xi,\eta)^{-1}= (1+|\xi+\eta|^2+|\xi|^2+|\eta|)^{-1}$$
  and
  $$\lambda(\xi,\eta)^{-1}\varphi(\xi,\eta)  \in BS^{-2}_{1,0}.$$

When the phase function $\lambda$  vanishes, the situation is more difficult. We refer the reader to \cite{BG,BG1}, where a more precise study has been developed to obtain bilinear dispersive estimates (instead of scattering properties).

\section{Proof of Theorem \ref{thm:main1}}\label{sec:main2}

 As we will show,  Theorem~\ref{thm:main1} follows from the case  corresponding to $\rho=\delta=0,$ a scaling argument and Lemma~\ref{lemma:opnorm}.  We first need to recall the following result.

\begin{thm}[{B\'enyi-Torres~\cite[Proposition 1]{bt2}}] \label{bsCalVal}There exist $x$-independent symbols in $BS^0_{0,0}$ that give rise to unbounded operators from $L^{p_1}\times L^{p_2}$ into $L^p$ for $1\le p_1,\,p_2, p<\infty,$ $\frac{1}{p}=\frac{1}{p_1}+\frac{1}{p_2}.$
\end{thm}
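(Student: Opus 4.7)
The plan is to produce, for each admissible triple $(p_1,p_2,p)$, an explicit $x$-independent symbol $\sigma\in BS^0_{0,0}$ whose associated bilinear operator is unbounded from $L^{p_1}\times L^{p_2}$ into $L^p$, by importing classical \emph{linear} Hirschman--Wainger counterexamples into the bilinear setting. Recall that there exists $\phi\in S^0_{0,0}(\rn)$ of the form $\phi(\zeta)=\chi(|\zeta|)\,e^{i|\zeta|^a}|\zeta|^{-b}$ (with $0<a<1$, $\chi$ smoothly vanishing near the origin, and $b$ small enough) whose associated linear multiplier $T^{\text{lin}}_\phi(h)(x)=\int_{\rn}\phi(\zeta)\widehat{h}(\zeta)e^{ix\cdot\zeta}\,d\zeta$ is unbounded on $L^r$ for every $r\ne 2$. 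I would plug $\phi$ into one of two bilinear tensor-like constructions, depending on whether $p=2$.

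For the case $p\ne 2$, take the \emph{diagonal} construction $\sigma(\xi,\eta):=\phi(\xi+\eta)$. Since $\partial_\xi^\beta\partial_\eta^\gamma\sigma=(\partial^{\beta+\gamma}\phi)(\xi+\eta)$ is bounded, $\sigma\in BS^0_{0,0}$. An interchange of integration (valid for $f,g\in\mathcal{S}$) identifies
\begin{equation*}
T_\sigma(f,g)(x)=T^{\text{lin}}_\phi(fg)(x),
\end{equation*}
so if $T_\sigma$ were bounded from $L^{p_1}\times L^{p_2}$ into $L^p$, then for every $h\in\mathcal{S}$ the factorization $h=fg$ with $f:=|h|^{p/p_1}(h/|h|)$ and $g:=|h|^{p/p_2}$, which satisfies $\|f\|_{L^{p_1}}\|g\|_{L^{p_2}}=\|h\|_{L^p}$, would force $T^{\text{lin}}_\phi$ to be bounded on $L^p$, contradicting the linear counterexample. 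A routine Schwartz approximation handles the fact that the $f,g$ so obtained need not themselves be smooth.

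For the case $p=2$, the trick above collapses since every $\phi\in L^\infty$ is automatically an $L^2$-multiplier by Plancherel. I would instead use the \emph{tensor} construction $\sigma(\xi,\eta):=\phi(\xi)\phi(\eta)$, trivially in $BS^0_{0,0}$; now $T_\sigma(f,g)=T^{\text{lin}}_\phi(f)\cdot T^{\text{lin}}_\phi(g)$. The constraint $1/p_1+1/p_2=1/2$ with $p_1,p_2<\infty$ forces $p_1,p_2>2$, so $T^{\text{lin}}_\phi$ is unbounded on both $L^{p_1}$ and $L^{p_2}$. In the symmetric subcase $p_1=p_2=4$ one tests with $f=g$, giving
\begin{equation*}
\frac{\|T_\sigma(f,f)\|_{L^2}}{\|f\|_{L^4}\|f\|_{L^4}}=\left(\frac{\|T^{\text{lin}}_\phi f\|_{L^4}}{\|f\|_{L^4}}\right)^{\!2},
\end{equation*}
which is unbounded as $f$ ranges over the Hirschman--Wainger extremizers. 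For asymmetric $p_1\ne p_2$, I would fix a Schwartz $g_0$ for which $G:=T^{\text{lin}}_\phi g_0$ is bounded below by some $c>0$ on a large ball $B_R$, and use a sequence $\{f_k\}\subset\mathcal{S}$ whose images $T^{\text{lin}}_\phi f_k$ are essentially concentrated in $B_R$ with $\|T^{\text{lin}}_\phi f_k\|_{L^{p_1}(B_R)}/\|f_k\|_{L^{p_1}}\to\infty$; the pointwise lower bound $|T^{\text{lin}}_\phi f_k\cdot G|\ge c\,|T^{\text{lin}}_\phi f_k|\,\mathbf{1}_{B_R}$ together with H\"older on $B_R$ then forces $\|T_\sigma(f_k,g_0)\|_{L^2}/\|f_k\|_{L^{p_1}}\to\infty$.

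The main obstacle is precisely this last asymmetric $p=2$ step: arranging for the Hirschman--Wainger extremizers to have their image concentrated on a fixed compact set, so that their product with a fixed Schwartz factor $G$ can be bounded from below in $L^2$. This forces one either to open up the explicit wavepacket structure of the extremizers, or to bypass it via a Closed Graph / uniform-boundedness argument in the spirit of Lemma~\ref{lemma:opnorm}: assuming $T_\sigma$ bounded with a single constant and fixing a suitable $g_0$ with $G$ nowhere vanishing, one rescales and translates the linear extremizers to contradict the uniform $L^{p_1}$ unboundedness of $T^{\text{lin}}_\phi$. Beyond this, the verifications of $\sigma\in BS^0_{0,0}$ and of the two identities $T_\sigma(f,g)=T^{\text{lin}}_\phi(fg)$ and $T_\sigma(f,g)=T^{\text{lin}}_\phi(f)\cdot T^{\text{lin}}_\phi(g)$ are routine Fourier-interchange computations.
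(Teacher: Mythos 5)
First, note that the paper does not prove this statement at all: it is quoted as a known result (Theorem~\ref{bsCalVal}, i.e.\ \cite[Proposition 1]{bt2}), following the paper's convention that letter-labelled theorems are imported. So you are not competing with an in-paper argument, and your idea of bootstrapping the linear Hirschman--Wainger example $\phi(\zeta)=\chi(|\zeta|)e^{i|\zeta|^a}$, $0<a<1$, into the bilinear setting is a legitimate independent route. Two small corrections before the main issue: you must take $b=0$, since for any $b>0$ the multiplier $e^{i|\zeta|^a}|\zeta|^{-b}$ \emph{is} bounded on $L^r$ for $|1/r-1/2|\le b/(na)$; and the factorization/mollification step in the diagonal case $p\ne 2$ is indeed routine (one checks $f_\varepsilon g_\varepsilon\to h$ in $L^2$ as well, then uses $L^2$-boundedness of $T_\phi^{\text{lin}}$ and Fatou). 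The diagonal case and the symmetric tensor case $p_1=p_2=4$ are correct.

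The genuine gap is the asymmetric case $p=2$, $p_1\ne p_2$, and it is worse than you suggest: the concentration strategy fails for the natural extremizers. For any fixed Schwartz $g_0$ with $G=T^{\text{lin}}_\phi g_0\in L^\infty$ one has the \emph{upper} bound $\|T^{\text{lin}}_\phi f\cdot G\|_{L^2}\le \|G\|_{L^\infty}\|f\|_{L^2}$, so unboundedness requires test functions with $\|f\|_{L^2}/\|f\|_{L^{p_1}}\to\infty$, i.e.\ spread-out $f$ whose image under the (essentially unitary) chirp multiplier is concentrated where $|G|\ge c$. But for $a<1$ the conjugate chirp $e^{-i|\zeta|^a}$ has a kernel concentrated near the origin, so the Hirschman--Wainger extremizers $\widehat{f_N}(\zeta)=e^{-i|\zeta|^a}\psi(\zeta/N)$ are themselves concentrated in a fixed ball $B$; hence $\|f_N\|_{L^{p_1}}\ge |B|^{1/p_1-1/2}\|f_N\|_{L^2(B)}\gtrsim\|f_N\|_{L^2}$ and the ratio $\|T_\sigma(f_N,g_0)\|_{L^2}/\|f_N\|_{L^{p_1}}$ stays bounded (in fact tends to $0$). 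Your fallback via a closed-graph argument does not repair this, since rescaling and translating does not produce the required spread-out extremizers either. Fortunately there is a simple fix that makes the whole tensor construction unnecessary: when $p=2$ the constraint $1/p_1+1/p_2=1/2$ with $p_2<\infty$ forces $p_1>2$, so take the $\eta$- and $x$-independent symbol $\sigma(\xi,\eta)=\phi(\xi)\in BS^0_{0,0}$, for which $T_\sigma(f,g)=T^{\text{lin}}_\phi(f)\,g$. Boundedness into $L^2$ together with the converse of H\"older's inequality (letting $g$ range over a dense subset of the unit ball of $L^{p_2}$) yields $\|T^{\text{lin}}_\phi f\|_{L^{p_1}}\lesssim\|f\|_{L^{p_1}}$, a contradiction. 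Combined with your diagonal symbol $\phi(\xi+\eta)$ for $p\ne2$, this closes the proof.
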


\begin{proof}[Proof of Theorem~\ref{thm:main1}]

Fix $\delta,\rho,p_1,p_2,p$ as in the hypothesis. Suppose, on the contrary, that $T_{\sigma}$ is bounded
from $L^{p_1}\times L^{p_2}$ into $L^p$ for all $\sigma\in
BS^{0}_{\rho,\delta}.$

Consider an $x$-independent symbol $\sigma \in \bsz$ and, for  multi-indices $\beta, \gamma$, set
\[
C_{\beta, \gamma}(\sigma) :=\sup_{\xi,\eta\in\re^n}|\partial^\beta_\xi \partial^\gamma_\eta \sigma(\xi, \eta)| (1+|\xi|+|\eta|)^{\rho(|\beta|+|\gamma|)}.
\]
For $\lambda > 0$ define
$\sigma_\lambda(\xi, \eta) := \sigma(\lambda \xi, \lambda \eta),$ $\xi, \eta \in \re^n.$
Then, for  all multi-indices $\beta, \gamma$ and $0<\lambda<1$, we have
\begin{align*}
|\partial^\beta_\xi \partial^\gamma_\eta \sigma_\lambda(\xi, \eta)| & = \lambda^{|\beta|+|\gamma|}| \partial^\beta_\xi \partial^\gamma_\eta \sigma(\lambda \xi, \lambda \eta) | \\
& \leq  \lambda^{(1-\rho)(|\beta|+|\gamma|)} C_{\beta, \gamma}(\sigma) (1+|\xi|+|\eta|)^{-\rho(|\beta|+|\gamma|)},
\end{align*}
giving
\begin{equation}\label{Clambda}
C_{\beta, \gamma}(\sigma_\lambda) \leq  \lambda^{(1-\rho)(|\beta|+|\gamma|)} C_{\beta, \gamma}(\sigma).
\end{equation}
Let $f, \,g \in \mathcal{S}$ and define
$f_\lambda(x) := f\left(\frac{x}{\lambda}\right)$
and
$g_\lambda(x) := g\left(\frac{x}{\lambda}\right),$ $x \in \re^n.$
Then
\begin{align*}
T_\sigma(f,g)(x)& = \int_{\re^{2n}} \sigma(\xi, \eta) \hat{f}(\xi) \hat{g}(\eta) e^{i x \cdot (\xi + \eta)} \, d\xi d\eta\\
& = \int_{\re^{2n}}  \sigma\left(\lambda \frac{\xi}{\lambda}, \lambda \frac{\eta}{\lambda} \right) \hat{f}\left(\lambda \frac{\xi}{\lambda}\right) \hat{g}\left( \lambda \frac{\eta}{\lambda}\right) e^{i \lambda x \cdot \left(\frac{\xi}{\lambda} + \frac{\eta}{\lambda}\right)} \, d\xi d\eta\\
& = \int_{\re^{2n}} \sigma_\lambda(\xi, \eta) \widehat{f_\lambda}(\xi) \widehat{g_\lambda}(\eta) e^{i \lambda x \cdot(\xi + \eta)} \, d\xi d\eta\\
& = T_{\sigma_\lambda}(f_\lambda, g_\lambda)(\lambda x).
\end{align*}

Let $K,\,N\in\na_0$ be given by Lemma~\ref{lemma:opnorm} for the class $BS^0_{\rho,\delta}$ and, without loss of generality,   assume $K=N$. Then  using that  $\|\sigma_\lambda\|_{N,N}= \left(\sup\limits_{|\beta|,\,|\gamma| \leq N} C_{\beta, \gamma}(\sigma_\lambda)\right) ,$ $\frac{1}{p}=\frac{1}{p_1}+\frac{1}{p_2},$ and  \eqref{Clambda}, we obtain
\begin{align*}
\norm{T_\sigma(f,g)}{L^{p}} &= \norm{T_{\sigma_\lambda}(f_\lambda, g_\lambda)(\lambda \cdot)}{L^p} = \lambda^{-\frac{n}{p}} \norm{T_{\sigma_\lambda}(f_\lambda, g_\lambda)}{L^p}\\
& \lesssim  \lambda^{-\frac{n}{p}}  \left(\sup\limits_{|\beta|,\,|\gamma| \leq N} C_{\beta, \gamma}(\sigma_\lambda)\right)  \norm{f_\lambda}{L^{p_1}} \norm{g_\lambda}{L^{p_2}}\\
& =   \lambda^{-\frac{n}{p} + \frac{n}{p_1} + \frac{n}{p_2}}  \left(\sup\limits_{|\beta|,\,|\gamma| \leq N} C_{\beta, \gamma}(\sigma_\lambda)\right)  \norm{f}{L^{p_1}} \norm{g}{L^{p_2}}\\
& \lesssim \left(\sup\limits_{|\beta|,\,|\gamma| \leq N} \lambda^{(1-\rho)(|\beta|+|\gamma|)} C_{\beta, \gamma}(\sigma) \right)  \norm{f}{L^{p_1}} \norm{g}{L^{p_2}},
\end{align*}
and letting $\lambda \rightarrow 0$, it follows that
\begin{equation}\label{false}
\norm{T_\sigma(f,g)}{L^p} \lesssim\,C_{0,0}(\sigma) \norm{f}{L^{p_1}} \norm{g}{L^{p_2}} \quad f \in L^{p_1}, g \in L^{p_2}.
\end{equation}
However, \eqref{false} cannot be true since this contradicts Theorem \ref{bsCalVal}. Indeed, take $\sigma\in BS_{0,0}^0$ $x$-independent such that $T_\sigma$ is not bounded from $L^{p_1}\times L^{p_2}$
 into $L^p$ and let $\varphi$ be an infinitely differentiable  function in $\re^{2n}$ supported in $\abs{(\xi,\eta)}\le 2$ and equal to one on $\abs{(\xi,\eta)}\le 1.$   For each $\varepsilon>0,$ set $\sigma_{\varepsilon}(\xi,\eta):=\varphi(\varepsilon\,\xi,\varepsilon\,\eta)\sigma(\xi,\eta).$ Then $\sigma_\varepsilon\in BS_{\rho,\delta}^{0}(\re^n)$   and $C_{0,0}(\sigma_\varepsilon)\le C_{0,0}(\sigma)$ for all $\varepsilon>0.$ If \eqref{false} were true we would have
\[
\|T_{\sigma_\varepsilon}(f,g)\|_{L^p}\lesssim C_{0,0}(\sigma) \norm{f}{L^{p_1}} \norm{g}{L^{p_2}} \quad f,\, g \in \mathcal{S}, \quad \text{ for all } \varepsilon>0.
\]
As $\varepsilon\to 0,$  $T_{\sigma_\varepsilon}(f,g)\to T_{\sigma}(f,g)$ pointwise; this and Fatou Lemma yield
\[
\|T_{\sigma}(f,g)\|_{L^p}\lesssim C_{0,0}(\sigma) \norm{f}{L^{p_1}} \norm{g}{L^{p_2}} \quad f,\, g \in \mathcal{S},
\]
a contradiction.

\end{proof}

\section{Proof of Theorem \ref{thm:main2}}\label{sec:main1}

\subsection{Preliminary  results}

We will use the following results  in the proof of  Theorem~\ref{thm:main2}.

\begin{thm}[Symbolic calculus, B\'enyi-Maldonado-Naibo-Torres~\cite{bmnt}]\label{symbolic}
Assume that $0\leq \delta\le\rho\leq 1,$ $\delta<1,$ and $\sigma\in
BS_{\rho, \delta}^m$. Then, for $j=1,\,2,$
$T_\sigma^{*j}=T_{\sigma^{*j}},$ where $\sigma^{*j}\in BS_{\rho,
\delta}^m$.
Moreover, if $0\leq \delta<\rho\leq 1$ and $\sigma\in BS_{\rho, \delta}^m,$
then $\sigma^{*1}$ and $\sigma^{*2}$ have explicit asymptotic expansions.
\end{thm}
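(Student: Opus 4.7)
The plan is to follow the classical strategy for symbolic calculus adapted to the bilinear setting: first derive an explicit oscillatory-integral formula for $\sigma^{*1}$ and $\sigma^{*2}$, then verify the Hörmander estimates by integration by parts, and finally obtain the asymptotic expansion by Taylor expansion around the critical point of the phase.

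First, I would dualize $T_\sigma$ to get an explicit formula. Working formally from the pairing $\langle T_\sigma(f,g),h\rangle = \langle T_\sigma^{*1}(h,g),f\rangle$ and writing $\hat f(\xi) = \int f(y) e^{-iy\cdot\xi}\,dy$, one reads off
\[
\sigma^{*1}(y,\zeta,\eta) = (2\pi)^{-n}\iint \sigma(x,\xi-\zeta-\eta,\eta)\, e^{i(x-y)\cdot \xi}\,d\xi\,dx,
\]
with an analogous formula for $\sigma^{*2}$ (swapping the roles of $\zeta$ and $\eta$). These are oscillatory integrals whose phase $(x-y)\cdot\xi$ has a unique nondegenerate critical point at $x=y$, $\xi=0$. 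One should be careful to justify this heuristic derivation by first inserting a Schwartz cutoff $\chi(\epsilon\xi,\epsilon x)$ and passing to the limit at the end.

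Second, to show $\sigma^{*1}\in BS^m_{\rho,\delta}$, I would combine two integration-by-parts identities in the oscillatory integral: $e^{i(x-y)\cdot\xi} = \langle x-y\rangle^{-2N}(I-\Delta_\xi)^N e^{i(x-y)\cdot\xi}$ to gain decay in $x-y$ at the cost of $\xi$-derivatives of the symbol, and $e^{i(x-y)\cdot\xi} = \langle \xi\rangle^{-2N}(I-\Delta_x)^N e^{i(x-y)\cdot\xi}$ to gain decay in $\xi$ at the cost of $x$-derivatives. After differentiating in $y$, $\zeta$, $\eta$ under the integral (which only shifts multi-indices on $\sigma$), I would split the $\xi$-integration into the region $|\xi|\le c(1+|\zeta|+|\eta|)^\rho$, where the symbol weight $(1+|\xi-\zeta-\eta|+|\eta|)^m$ is comparable to $(1+|\zeta|+|\eta|)^m$, and its complement, where the $\langle\xi\rangle^{-2N}$ factor dominates. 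The hypothesis $\delta\le \rho$ guarantees that the $x$-derivatives picked up by the second integration by parts do not degrade the $\xi$-gain, and $\delta<1$ is what prevents the bookkeeping from being critical and leaves a summable surplus in $\xi$.

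Third, for the asymptotic expansion when $\delta<\rho$, I would Taylor expand $\sigma(x,\xi-\zeta-\eta,\eta)$ jointly in $x$ about $y$ and in the second slot about $-\zeta-\eta$ (equivalently, in $\xi$ about $0$) to order $M$. Using $(2\pi)^{-n}\iint (x-y)^\alpha \xi^\alpha e^{i(x-y)\cdot\xi}\,d\xi\,dx = (i)^{|\alpha|}\alpha!\,\delta_{\alpha,\alpha}$ in the distributional sense, the term of order $\alpha$ produces
\[
\sigma^{*1}(y,\zeta,\eta) \sim \sum_{\alpha} \frac{(-i)^{|\alpha|}}{\alpha!}\partial_x^\alpha \partial_2^\alpha \sigma(y,-\zeta-\eta,\eta),
\]
where $\partial_2$ denotes differentiation in the second frequency slot. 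Each term lies in $BS^{m+|\alpha|(\delta-\rho)}_{\rho,\delta}$, so the orders strictly decrease precisely when $\delta<\rho$; the remainder after $M$ terms is controlled by iterating the integration-by-parts bound on the Taylor remainder and lies in $BS^{m-M(\rho-\delta)}_{\rho,\delta}$.

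The main obstacle is the precise bookkeeping in the second step: unlike the linear case one must simultaneously track two frequency variables, and the shifted argument $\xi-\zeta-\eta$ interacts nontrivially with the bilinear weight $(1+|\zeta|+|\eta|)$, so establishing that the two zones $|\xi|\lessgtr (1+|\zeta|+|\eta|)^\rho$ in fact cover all cases and patch to give a symbol estimate uniform in $(y,\zeta,\eta)$ is the technical heart of the argument. The expansion step is then largely formal once the remainder estimate is in place.
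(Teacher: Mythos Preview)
This statement is not proved in the present paper: it is quoted as a known result from \cite{bmnt} (note the upper-case theorem label and the paper's convention that such labels mark results proved elsewhere), and is used here only as an ingredient in the proof of Theorem~\ref{thm:main2}. There is therefore no proof in this paper to compare your proposal against.

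That said, your outline is the standard route and is essentially what the argument in \cite{bmnt} (and its linear predecessor) does: write $\sigma^{*1}$ as the oscillatory integral you display, regularize, integrate by parts in $x$ and in $\xi$ to obtain the $BS^m_{\rho,\delta}$ estimates, and then Taylor expand about the critical point $x=y$, $\xi=0$ for the asymptotics. One small wording issue: in the expansion step you do not Taylor expand \emph{jointly} in $x$ and in the second frequency slot. You expand only in $x$ about $y$; the factors $(x-y)^\alpha$ are then converted to $\partial_\xi^\alpha$ via integration by parts against $e^{i(x-y)\cdot\xi}$, after which the $x$-integral produces a delta at $\xi=0$. That is what yields the terms $\partial_x^\alpha\partial_\xi^\alpha\sigma(y,-\zeta-\eta,\eta)$ and explains why each term lies in $BS^{m+|\alpha|(\delta-\rho)}_{\rho,\delta}$.
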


\begin{thm}[{Michalowski-Rule-Staubach~\cite[Theorem 5.5]{MRS}}] \label{MRS} Let $0\le \delta\le \rho\le 1,$ $\delta<1,$ $1\le p_1,p_2,p\le \infty,$ $\frac{1}{p}=\frac{1}{p_1}+\frac{1}{p_2}$ and
\[
m<n(\rho -1)\max\{\fr{1}{2},(\fr{2}{p_1}-\fr{1}{2}), (\fr{2}{p_2}-\fr{1}{2}), (\fr{3}{2}-\fr{2}{p})\}.
\]
If $\sigma\in BS_{\rho,\delta}^m,$ then there exist   $K,\,N\in\na_0$ such that
\[
\norm{T_\sigma(f,g)}{L^p}\lesssim \norm{\sigma}{K,N}\norm{f}{L^{p_1}}\norm{g}{L^{p_2}}.
\]
\end{thm}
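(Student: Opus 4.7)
The plan is to establish Theorem~\ref{MRS} by reducing it, via bilinear complex interpolation (including interpolation among symbol classes as in Lemma~\ref{interpolation}), to a small collection of ``vertex'' estimates. The four expressions inside the maximum on the right-hand side are arranged so that the threshold at any admissible $(p_1,p_2,p)$ is the correct convex combination of the thresholds at the vertex triples. A natural set of vertices consists of $(L^2,L^2;L^1)$ (giving the $\tfrac{1}{2}$ term), $(L^1,L^\infty;L^1)$ and $(L^\infty,L^1;L^1)$ (giving the $2/p_i-\tfrac{1}{2}$ terms at their extrema), and $(L^\infty,L^\infty;L^\infty)$ (giving the $\tfrac{3}{2}-2/p$ term); the first vertex requires $m<n(\rho-1)/2$ and the others require $m<3n(\rho-1)/2$.

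For the vertex estimates I would perform a Littlewood-Paley decomposition $\sigma = \sum_{j\ge 0}\sigma_j$ in the frequency variables, with $\sigma_j$ supported where $\abs{(\xi,\eta)}\sim 2^j$. The symbol bounds yield
\[
\abs{\partial_x^\alpha\partial_\xi^\beta\partial_\eta^\gamma \sigma_j(x,\xi,\eta)}\lesssim \norm{\sigma}{K,N}\,2^{(m+\delta\absa)j}\,2^{-\rho(\absb+\absg)j},
\]
and repeated integration by parts in $(\xi,\eta)$ produces the kernel decay
\[
\abs{K_j(x,y,z)}\lesssim 2^{(2n+m)j}\bigl(1+2^{\rho j}(\abs{x-y}+\abs{x-z})\bigr)^{-M}
\]
for any $M$. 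The crude bound $\int \abs{K_j(x,y,z)}\,dy\,dz\lesssim 2^{j(m+2n(1-\rho))}$ already gives $L^\infty\times L^\infty\to L^\infty$ boundedness under $m<2n(\rho-1)$; sharpening to the threshold $m<3n(\rho-1)/2$ rests on a Cauchy-Schwarz/Plancherel refinement — writing the kernel integral as an $L^2$ pairing, using $\|\sigma_j(x,\cdot,\cdot)\|_{L^2_{\xi,\eta}}\lesssim 2^{(m+n)j}$, and exchanging a factor of $2^{nj/2}$ for the missing spatial decay. The case $(L^2,L^2;L^1)$ follows from Plancherel in a similar spirit and yields the threshold $m<n(\rho-1)/2$ after summing over $j$, and the mixed vertex cases are obtained by combining Plancherel in one slot with kernel integrability in the other.

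The main obstacle I would expect is precisely this Cauchy-Schwarz step that delivers the sharp dyadic threshold; without it the naive kernel-integrability estimate forces $m<2n(\rho-1)$, which is strictly more restrictive than the MRS value $3n(\rho-1)/2$. A further technical point is handling $\delta>0$: the $x$-derivatives of $\sigma_j$ grow like $2^{\delta j}$, so the integrations by parts in $x$ required to keep track of smoothness during interpolation must be balanced carefully against the spatial decay of the kernel, which is where the hypotheses $\delta<1$ and $\delta\le\rho$ become essential (and where the symbolic calculus of Theorem~\ref{symbolic} enters if transpose-based arguments are used to reduce mixed cases to a canonical one). Once all vertex cases are established, one applies bilinear complex interpolation to the analytic family $\sigma\mapsto T_\sigma$ with the order $m$ varying linearly in the interpolation parameter; Lemma~\ref{interpolation} guarantees that the interpolated symbols remain in the appropriate bilinear H\"ormander classes, and the stated estimate follows.
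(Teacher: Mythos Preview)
First, note that Theorem~\ref{MRS} is not proved in this paper: it is quoted as a known result from \cite{MRS} (the paper's convention is that upper-case labeled theorems are citations). So there is no ``paper's own proof'' to compare against; what follows is an assessment of your sketch against the MRS strategy as one can infer it, together with how this paper uses the result.

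Your overall plan (dyadic decomposition in frequency, kernel estimates via integration by parts, a Cauchy--Schwarz/Plancherel refinement to reach the $L^\infty$ endpoint, and then bilinear complex interpolation among symbol classes) is the right architecture. However, the set of four vertices you list is \emph{not sufficient} to recover the MRS threshold by interpolation. Concretely, take $p_1=p_2=4$, $p=2$: the MRS value is $\max\{\tfrac12,0,0,\tfrac12\}=\tfrac12$. Any convex combination of your four vertices $(\tfrac12,\tfrac12)$, $(1,0)$, $(0,1)$, $(0,0)$ that hits $(\tfrac14,\tfrac14)$ must put weight at most $\tfrac12$ on $(\tfrac12,\tfrac12)$ (the only vertex with threshold $\tfrac12$), and the remaining weight falls on vertices with threshold $\tfrac32$, yielding an interpolated threshold no better than $\tfrac12\cdot\tfrac12+\tfrac12\cdot\tfrac32=1$. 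Since $n(\rho-1)<0$, this forces the stronger restriction $m<n(\rho-1)$ rather than the claimed $m<\tfrac{n}{2}(\rho-1)$, and the argument fails throughout the central region where the maximum equals $\tfrac12$.

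The missing ingredient is the pair of ``midpoint'' vertices $L^2\times L^\infty\to L^2$ and $L^\infty\times L^2\to L^2$, each with threshold $\tfrac{n}{2}(\rho-1)$. These are obtained from your $L^2\times L^2\to L^1$ estimate by passing to the formal transposes via the symbolic calculus (Theorem~\ref{symbolic}), exactly as this paper does in the proof of Theorem~\ref{thm:main2}. With the six vertices $(0,0)$, $(1,0)$, $(0,1)$ at threshold $\tfrac32$ and $(\tfrac12,\tfrac12)$, $(\tfrac12,0)$, $(0,\tfrac12)$ at threshold $\tfrac12$, the piecewise-linear function $\max\{\tfrac12,\tfrac{2}{p_1}-\tfrac12,\tfrac{2}{p_2}-\tfrac12,\tfrac32-\tfrac{2}{p}\}$ is recovered exactly by interpolating along the appropriate segments. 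Your Littlewood--Paley kernel estimates are otherwise on target; in particular, the Cauchy--Schwarz/Plancherel step you describe is precisely what produces the $R^{(1-\rho)n+m}$ bound of Lemma~\ref{compactsupp}(b).
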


Set   $\tilde{m}(p_1,p_2)=n(\rho -1)\max\{\fr{1}{2},(\fr{2}{p_1}-\fr{1}{2}), (\fr{2}{p_2}-\fr{1}{2}), (\fr{3}{2}-\fr{2}{p})\}$ and note that, when $p>1,$ we have  $m(p_1,p_2)=n(\rho -1)\max\{\FR{1}{2},\,\fr{1}{p_1},\, \fr{1}{p_2}, (1-\fr{1}{p})\}$.  Referring to Figure~\ref{fig1:main2},
  we then have that $m(p_1,p_2)=n(\rho -1)\frac{1}{p_2}$ and $\tilde{m}(p_1,p_2)=n(\rho -1)(\frac{2}{p_2}-\frac{1}{2})$ in region $I,$
$m(p_1,p_2)=n(\rho -1)\frac{1}{p_1}$ and $\tilde{m}(p_1,p_2)=n(\rho -1)(\frac{2}{p_1}-\frac{1}{2})$ in region $II,$
$m(p_1,p_2)=n(\rho -1)(1-\frac{1}{p})$ and $\tilde{m}(p_1,p_2)=n(\rho -1)(\frac{3}{2}-\frac{2}{p})$ in region $III,$ and
$m(p_1,p_2)=\tilde{m}(p_1,p_2)=n(\rho -1)\frac{1}{2}$ in region  $IV.$ Then $\tilde{m}<m$ in regions $I,$ $II$ and $III,$ and therefore the Banach case of Theorem~\ref{thm:main2} is an improvement on Theorem~\ref{MRS}.

In the non-Banach case ($p<1$),  we will use bilinear Calder\'on-Zygmund theory to get the boundedness results stated in Theorem~\ref{thm:main2}. Indeed, we have the following result:

\begin{theorem}[Bilinear Calder\'on-Zygmund operators]\label{thm:bscz} Let $0 \leq \delta \le \rho \le 1,$ $ \delta <
1,$ $0<\rho,$ and set $m_{cz}:=2n(\rho-1)$. If $\sigma \in
BS^{m}_{\rho,\delta}$ and $m< m_{cz},$ then $T_\sigma$ is a bilinear
Calder\'on-Zygmund operator. As a consequence, the following mapping
properties hold true for $1\le p_1,\,p_2\le \infty,$ $\frac{1}{2}\le
p<\infty,$ $\frac{1}{p}=\frac{1}{p_1}+\frac{1}{p_2}$:
\begin{enumerate}[(i)]
\item\label{first} if $1<p_1,\,p_2$, then there exist $K,\,N\in\na_0$  such that
\[
\norm{T_\sigma(f,g)}{L^p} \lesssim \norm{\sigma}{K,N}  \norm{f}{L^{p_1}}
\norm{g}{L^{p_2}},
\]
where $L^{p_1}$ or $L^{p_2}$ should be replaced by
$L^\infty_c$ (bounded functions with compact support) if
$p_1=\infty$ or $p_2=\infty,$ respectively;
\item if $p_1=1$ or $p_2=1$, then there exist $K,\,N\in\na$  such that
\[
\norm{T_\sigma(f,g)}{L^{p,\infty}} \lesssim \norm{\sigma}{K,N} \norm{f}{L^{p_1}}
\norm{g}{L^{p_2}},
\]
where $L^{p_1}$ or $L^{p_2}$ should be replaced by
$L^\infty_c$  if $p_1=\infty$ or $p_2=\infty,$ respectively;

\item there exist $K,\,N\in\na$  such that
\[
\norm{T_\sigma(f,g)}{BMO} \lesssim \norm{\sigma}{K,N}  \norm{f}{L^\infty}
\norm{g}{L^\infty}
\]
for $f,\,g\in L^\infty_c;$
\item weighted versions of the above inequalities (see Section~\ref{sec:weights}).

\end{enumerate}
\end{theorem}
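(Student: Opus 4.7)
My plan is to verify the two defining properties of a bilinear Calder\'on--Zygmund operator---initial boundedness on a single H\"older triple, plus off-diagonal size and smoothness estimates on the Schwartz kernel---and then invoke the bilinear Calder\'on--Zygmund theory of Grafakos--Torres~\cite{GT}. Items (i), (ii) and (iii) will follow immediately from that theory, while (iv) comes from its multilinear weighted extension, to be addressed in Section~\ref{sec:weights}. For the initial bound, observe that since $\rho\le 1$ one has $m_{cz}=2n(\rho-1)\le n(\rho-1)/2=\tilde m(2,2)$, so the hypothesis $m<m_{cz}$ gives $m<\tilde m(2,2)$; Theorem~\ref{MRS} applied with $p_1=p_2=2$ and $p=1$ then supplies
\[
\|T_\sigma(f,g)\|_{L^1}\lesssim \|\sigma\|_{K,N}\,\|f\|_{L^2}\|g\|_{L^2}.
\]

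For the kernel estimates, I will write the Schwartz kernel of $T_\sigma$ as the oscillatory integral
\[
K(x,y,z)=\int_{\re^{2n}}\sigma(x,\xi,\eta)\,e^{i(x-y)\cdot\xi+i(x-z)\cdot\eta}\,d\xi\,d\eta
\]
and decompose $\sigma=\sum_{j\ge 0}\sigma_j$ via a dyadic Littlewood--Paley partition in $(\xi,\eta)$, with $\sigma_j$ localized to $|(\xi,\eta)|\sim 2^j$ and obeying $|\partial_\xi^\beta\partial_\eta^\gamma\sigma_j|\lesssim 2^{jm}\,2^{-j\rho(|\beta|+|\gamma|)}$. Non-stationary phase then yields, for every $M\ge 0$,
\[
|K_j(x,y,z)|\lesssim 2^{j(m+2n)}\bigl(1+2^{j\rho}(|x-y|+|x-z|)\bigr)^{-M},
\]
and summing in $j$ (splitting at the critical scale $2^{j_0\rho}\sim(|x-y|+|x-z|)^{-1}$) produces
\[
|K(x,y,z)|\lesssim(|x-y|+|x-z|)^{-(m+2n)/\rho}.
\]
The hypothesis $m<2n(\rho-1)$ is precisely the inequality $(m+2n)/\rho<2n$, so off-diagonal one gets the CZ size bound $|K(x,y,z)|\lesssim(|x-y|+|x-z|)^{-2n}$. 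The H\"older smoothness in each of $x,y,z$ is then obtained by interpolating this size estimate with the corresponding bound for $\partial_x K$, $\partial_y K$, $\partial_z K$: differentiating under the integral in $y$ or $z$ produces an extra factor $\xi$ or $\eta$ (cost $2^j$), while differentiation in $x$ uses $\partial_x\sigma\in BS^{m+\delta}_{\rho,\delta}$ (cost $2^{j\delta}\le 2^{j\rho}$). Since $m<2n(\rho-1)$ leaves a fixed positive gap, a standard interpolation then delivers a H\"older modulus of continuity with some $\eps>0$. These pointwise kernel bounds are essentially those developed in \cite{bmnt}, whose technology I invoke.

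With a single-triple $L^p$ bound and CZ kernel size plus H\"older regularity in hand, $T_\sigma$ fits the definition of a bilinear Calder\'on--Zygmund operator in the sense of \cite{GT}, and all of (i)--(iv) follow from the general bilinear (and, for (iv), multilinear weighted) Calder\'on--Zygmund theorem. The main technical obstacle is the dyadic bookkeeping in the kernel analysis: one must verify that the threshold $m<2n(\rho-1)$---twice the linear $L^2$-type threshold, reflecting the $2n$-dimensional frequency integration---is exactly what produces CZ size decay while simultaneously leaving enough room for a positive H\"older exponent in each variable. Once that accounting is in place, the rest is classical.
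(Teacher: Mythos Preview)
Your proposal is correct and follows essentially the same route as the paper: the $L^2\times L^2\to L^1$ bound from Theorem~\ref{MRS}, the kernel size and H\"older estimates from the pointwise bounds of \cite{bmnt} (packaged here as Theorem~\ref{thm:kernelestimates}), and then the bilinear Calder\'on--Zygmund theory of \cite{GT}. The paper streamlines the one step you sketch by hand: rather than interpolating, it first reduces to $m$ just below $m_{cz}$, uses the inclusion $BS^m_{\rho,\delta}\subset BS^{m_{cz}}_{\rho,\delta}$ to obtain the size exponent exactly $2n$, and then picks $\varepsilon\in(0,1)$ with $(m+2n+\varepsilon)/\rho=2n+\varepsilon$ so that the H\"older regularity can be read off directly from Theorem~\ref{thm:kernelestimates}\eqref{thm:kernelestimates-vi}.
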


The results of Theorem~\ref{thm:bscz}  are consequences of the following  estimates for the kernel of $T_\sigma$:

\begin{thm} \label{thm:kernelestimates} Let $\sigma \in BS^m_{\rho,\delta}, $ $0 < \rho \leq 1$, $0 \leq \delta < 1$, $m \in \re$, and denote by $\mathcal{K}(x,y,z)$
 the distributional kernel of the associated bilinear pseudodifferential operator $T_\sigma$. For $x, y, z \in \rn$, set
$$
S(x,y,z) = |x-y|+|x-z|+|y-z|.
$$
\begin{enumerate}[(i)]

\item\label{thm:kernelestimates-i} Given $\alpha, \beta, \gamma \in \na_0^n$, there exists $N_0 \in \na_0$ such that for each $l\geq N_0$,
$$
\sup_{(x,y,z): S(x,y,z) > 0} S(x,y,z)^l |D^\alpha_x D^\beta_y
D^\gamma_z \mathcal{K}(x,y,z)| < \infty.
$$
\item\label{thm:kernelestimates-ii} Suppose that $\sigma$ has compact support in $(\xi, \eta)$ uniformly in $x$. Then $\mathcal{K}$ is smooth, and given $\alpha, \beta, \gamma \in \na_0^n$ and
$N_0 \in \na_0$, there exists $C > 0$ such that for all $x, y, z \in \rn$ with $S(x,y,z)>0$
    $$
   |D^\alpha_x D^\beta_y D^\gamma_z \mathcal{K}(x,y,z)| \leq C (1 + S(x,y,z))^{-N_0}.
    $$
\item\label{thm:kernelestimates-iii} Suppose that $m + M + 2n < 0$ for some $M \in \na_0$. Then $\mathcal{K}$ is a bounded continuous function with bounded continuous derivatives of order $\leq M$.

\item\label{thm:kernelestimates-iv} Suppose that $m + M + 2n = 0$ for some $M \in \na_0$. Then there exists a constant $C > 0$ such that for all $x, y, z \in \rn$ with $S(x,y,z)>0$,
$$
\sup_{|\alpha + \beta + \gamma|=M}|D^\alpha_x D^\beta_y D^\gamma_z
\mathcal{K}(x,y,z)| \leq C |\log |S(x,y,z)||.
$$
\item\label{thm:kernelestimates-v} Suppose that $m + M + 2n > 0$ for some $M \in \na_0$. Then, given $\alpha, \beta, \gamma \in \na_0^n$, there exists a positive constant $C$ such that
for all $x, y, z \in \rn$ with $S(x,y,z) > 0$,
\begin{align*}
\sup\limits_{|\alpha + \beta + \gamma|=M} |\partial_x^\alpha
\partial_y^\beta \partial_z^\gamma \mathcal{K}(x,y,z)| \leq C S(x,y,z)^{-(m +
M + 2n)/\rho}.
\end{align*}
\item\label{thm:kernelestimates-vi} Suppose that $m + \varepsilon + 2n > 0$ for some $\varepsilon \in (0,1)$. Then, there exists a positive constant $C$ such that
for all $x, y, z, u \in \rn$ with $S(x,y,z) > 0$ and  $|u|\leq
S(x,y,z),$
\begin{align*}
 \lefteqn{|\mathcal{K}(x,y,z)-\mathcal{K}(x+u,y,z)|+|\mathcal{K}(x,y,z)-\mathcal{K}(x,y+u,z)|} & & \\
 & & + |\mathcal{K}(x,y,z)-\mathcal{K}(x,y,z+u)| \leq C |u|^\varepsilon S(x,y,z)^{-(m + \varepsilon + 2n)/\rho}.
\end{align*}
\end{enumerate}
All constants in the above inequalities depend linearly on $\norm{\sigma}{K,N}$ for some $K,\,N\in\na_0.$
\end{thm}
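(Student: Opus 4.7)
The plan is to represent the distributional kernel as the oscillatory integral
\[
\mathcal{K}(x,y,z) \;=\; \int_{\rn}\!\int_{\rn}\! \sigma(x,\xi,\eta)\, e^{i(x-y)\cdot\xi + i(x-z)\cdot\eta}\,\dxde,
\]
and to control all six statements through a Littlewood--Paley decomposition of the symbol. Pick $\phi\in C_c^\infty(\rtn)$ with $\phi\equiv 1$ on $\{\abs{(\xi,\eta)}\le 1\}$ and $\supp\phi\subset\{\abs{(\xi,\eta)}\le 2\}$, set $\phi_0=\phi$ and $\phi_j(\xi,\eta):=\phi(2^{-j}(\xi,\eta))-\phi(2^{-j+1}(\xi,\eta))$ for $j\ge 1$, and write $\sigma=\sum_{j\ge 0}\sigma_j$ with $\sigma_j:=\sigma\phi_j$ supported where $\upxe\sim 2^j$. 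Since each $\sigma_j$ is compactly supported in $(\xi,\eta)$, the associated kernel $K_j$ of $T_{\sigma_j}$ is smooth and all oscillatory integrals are absolutely convergent.

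The heart of the argument is the dyadic kernel estimate: for every $P\in\na_0$ and multi-indices $\alpha,\beta,\gamma$ there exist $K,N\in\na_0$ such that
\[
\bigl|\partial_x^\alpha\partial_y^\beta\partial_z^\gamma K_j(x,y,z)\bigr| \;\lesssim\; \norm{\sigma}{K,N}\, 2^{j(m+|\alpha|+|\beta|+|\gamma|+2n)}\bigl(1+2^{j\rho}S(x,y,z)\bigr)^{-P},
\]
uniformly in $j\ge 0$ and $(x,y,z)$ with $S(x,y,z)>0$. I would prove this by integrating by parts using the commuting operators
\[
L_\xi^{(j)} := \frac{1-2^{2j\rho}\Delta_\xi}{1+2^{2j\rho}\abs{x-y}^2},\qquad L_\eta^{(j)} := \frac{1-2^{2j\rho}\Delta_\eta}{1+2^{2j\rho}\abs{x-z}^2},
\]
each of which fixes the exponential. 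Because $\sigma_j$ lives where $\upxe\sim 2^j$, the H\"ormander bound forces every application of $2^{2j\rho}\Delta_\xi$ or $2^{2j\rho}\Delta_\eta$ to preserve the size $2^{jm}$ of $\sigma_j$. Differentiation in $x$, $y$, $z$ brings down factors of $|\xi+\eta|$, $|\xi|$, $|\eta|$ of size $2^j$ from the phase (any $\delta$-growth of $\partial_x^\alpha\sigma$ is absorbed since $\delta\le 1$), while the $(\xi,\eta)$-volume of $\supp\sigma_j$ contributes $2^{2jn}$; the claim then follows from $\abs{x-y}+\abs{x-z}\gtrsim S(x,y,z)$.

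Statements (iii)--(v) then follow by summing the dyadic estimate in $j$ with $M:=|\alpha|+|\beta|+|\gamma|$. If $m+M+2n<0$ the series converges uniformly, yielding (iii). If $m+M+2n>0$, choose $P$ with $P\rho>m+M+2n$ and split at the critical index $j_\ast:=\lceil\rho^{-1}\log_2(1/S)\rceil$: the low frequencies contribute $\sum_{j\le j_\ast}2^{j(m+M+2n)}\lesssim 2^{j_\ast(m+M+2n)}\sim S^{-(m+M+2n)/\rho}$, while the high frequencies $\sum_{j>j_\ast}2^{j(m+M+2n-P\rho)}S^{-P}$ telescope to the same order, giving (v). The borderline $m+M+2n=0$ produces the logarithm of (iv). Statement (i) follows by choosing $l$ and $P$ with $l\rho\ge m+M+2n$ and $P\ge l$, which bounds $S^l\abs{\partial_x^\alpha\partial_y^\beta\partial_z^\gamma\mathcal{K}}$ uniformly on $\{S>0\}$; (ii) is immediate since the dyadic sum is essentially finite and $P$ is unrestricted.

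For the H\"older estimate (vi), the cleanest case is a $y$-displacement, where
\[
K_j(x,y+u,z)-K_j(x,y,z) = \int_{\rn}\!\int_{\rn}\! \sigma_j(x,\xi,\eta)\bigl(e^{-iu\cdot\xi}-1\bigr)e^{i(x-y)\cdot\xi+i(x-z)\cdot\eta}\,\dxde,
\]
and $|e^{-iu\cdot\xi}-1|\le 2^{1-\varepsilon}\abs{u}^\varepsilon\abs{\xi}^\varepsilon\lesssim\abs{u}^\varepsilon 2^{j\varepsilon}$ reproduces the argument of (v) with $M+\varepsilon$ in place of $M$; the $z$-displacement is identical with $\eta$ replacing $\xi$. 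The $x$-displacement splits into a phase difference---contributing the same extra factor $\abs{u}^\varepsilon 2^{j\varepsilon}$ from $|\xi+\eta|^\varepsilon$---and a symbol difference $\abs{\sigma(x+u,\xi,\eta)-\sigma(x,\xi,\eta)}\lesssim\abs{u}^\varepsilon\upxe^{m+\varepsilon\delta}$, which yields an even smaller power of $S^{-1}$ since $\delta\le 1$. The principal technical obstacle is the bookkeeping in the dyadic estimate: one must carefully verify that the $2^{2j\rho}$-weighted Laplacians interact cleanly with the symbol seminorms---so that the implicit constant is linear in some $\norm{\sigma}{K,N}$---and that derivatives of $\phi_j$ (which belong to the linear class $S^0_{1,0}$, hence to any $S^0_{\rho,\delta}$) introduce no extraneous losses in the $\rho$-powers. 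Once this is in place, the subsequent summations follow the standard template for linear H\"ormander symbols.
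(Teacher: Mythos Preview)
Your approach is correct and follows the standard Littlewood--Paley/integration-by-parts template for H\"ormander-class kernel estimates. The paper does not actually prove this theorem: for parts (i)--(v) it simply cites the authors' earlier paper \cite[Theorem~6]{bmnt}, and for (vi) it says the argument is the ``H\"older version'' of (v) and points to the linear kernel estimates of \'Alvarez--Hounie \cite[Theorem~1.1]{AH}. Both references use precisely the dyadic decomposition $\sigma=\sum_j\sigma_j$ with $\supp\sigma_j\subset\{\upxe\sim 2^j\}$, followed by integration by parts at scale $2^{j\rho}$ and summation in $j$, so your outline coincides with theirs.
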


We refer the reader to \cite[Theorem 6]{bmnt} for the proofs of
items \eqref{thm:kernelestimates-i}-\eqref{thm:kernelestimates-v} in
Theorem~\ref{thm:kernelestimates}. Item
\eqref{thm:kernelestimates-vi} corresponds to the ``H\"older''
version of item \eqref{thm:kernelestimates-v}, its proof is
analogous and relies on estimates for linear kernels   as presented in Alvarez-Hounie
\cite[Theorem 1.1]{AH}.

\begin{proof}[Proof of Theorem~\ref{thm:bscz}]

It is enough to prove the result for  $\sigma \in
BS^m_{\rho,\delta}$  and $m$ such that   $2n(\rho-1)-t<m<2n(\rho-1)=m_{cz}$ for some small positive number $t.$ Denote by
$\mathcal{K}(x,y,z)$ the distributional kernel of the associated bilinear
pseudodifferential operator $T_\sigma$.
Using that $BS^m_{\rho,\delta}\subset BS^{m_{cz}}_{\rho,\delta},$ part \eqref{thm:kernelestimates-v} of Theorem~\ref{thm:kernelestimates} applied to $BS^{m_{cz}}_{\rho,\delta}$ yields, with constants depending linearly on $\norm{\sigma}{N,N}$ for some $N\in\na_0,$
\[
\abs{\mathcal{K}(x,y,z)}\lesssim \frac{1}{(|x-y|+|x-z|+|y-z|)^{2n}},
\]
while part \eqref{thm:kernelestimates-vi} gives, again with
constants depending linearly on $\norm{\sigma}{N,N}$ for some
$N\in\na_0,$
\begin{align*}
 \lefteqn{|\mathcal{K}(x,y,z)-\mathcal{K}(x+u,y,z)|+|\mathcal{K}(x,y,z)-\mathcal{K}(x,y+u,z)|} & & \\
 & &  + |\mathcal{K}(x,y,z)-\mathcal{K}(x,y,z+u)| \lesssim  \frac{|u|^{\varepsilon}}{(|x-y|+|x-z|+|y-z|)^{2n+\varepsilon}},
\end{align*}
 where   $|u|\leq |x-y|+|x-z|+|y-z|$
and  $\varepsilon\in (0,1)$ has been  chosen such that $(m+2n+\varepsilon)/\rho=2n+\epsilon$  (which is possible since $2n(\rho-1)-t<m<2n(\rho-1)$ for small enough $t>0$).
Moreover, since   $m<m_{cz}<n(\rho-1)/2,$
Theorem~\ref{MRS} yields that there exists $N\in\na_0$
such that $T_\sigma$  satisfies
\[
\norm{T_\sigma(f,g)}{L^1}\lesssim\norm{\sigma}{N,N}\norm{f}{L^2}\norm{g}{L^2}.
\]
We then conclude that $T_\sigma$ is a bilinear Calder\'on-Zygmund operator for which  the corresponding boundedness properties follow (see \cite{GT}).
\end{proof}

\subsection{Proof of Theorem~\ref{thm:main2}}

With these preliminary and technical results, we are now ready for the proof of our main result in this section.

\begin{proof}[Proof of Theorem~\ref{thm:main2}]

We first prove the theorem for $p_1=p_2=p=\infty,$ in which case $m(p_1,p_2)=n(\rho-1).$ Let $m<n(\rho-1),$ $0\le\delta\le \rho\le 1,$ $\delta<1.$
Let $\{\psi_j\}_{j\in\na_0}$ be a partition of unity on $\re^{2n},$
 \[\sum_{j=0}^\infty\psi_j(\xi,\eta)=1,\quad \xi,\eta\in\re^n,\]
 such that  $\psi_0$ is  supported in  $\{(\xi,\eta)\in \re^{2n}:\abs{(\xi,\eta)}\le 2\}$ and  $\psi_j(\xi,\eta)=\psi(2^{-j}\xi,2^{-j}\eta),$ for some $\psi\in \mathcal{C}^\infty_0(\re^{2n})$ supported in $\{(\xi,\eta)\in\re^{2n}:\abs{(\xi,\eta)}\sim 1\}$ for $j\in\na.$ We decompose the symbol $\sigma(x,\xi,\eta)$ as
 \[
 \sigma(x,\xi,\eta)=\sum_{j=0}^\infty \sigma_{j}(x,\xi,\eta),
 \]
where $\sigma_j(x,\xi,\eta):=\sigma(x,\xi,\eta)\psi_j(\xi,\eta).$ Then $\|\sigma_j\|_{0,N}\lesssim \|\sigma\|_{0,N}$ for all $N\in\na_0$ and,
by Lemma~\ref{compactsupp}  (see Section \ref{sec:main4}),
\[
\|T_{\sigma_j}(f,g)\|_\infty\lesssim \|\sigma\|_{0,2N}\,2^{j(m+n(1-\rho))} \|f\|_{L^\infty}\|g\|_{L^\infty},\quad j\in\na_0,\, N>n,\,K\in\na_0.
\]
Therefore
\begin{align*}
\|T_\sigma(f,g)\|_{L^\infty}\le\sum_{j=0}^\infty\|T_{\sigma_j}(f,g)\|_{\infty} & \lesssim \|\sigma\|_{0,2N} \sum_{j=0}^\infty 2^{j(m+n(1-\rho))} \|f\|_{L^\infty}\|g\|_{L^\infty}\\
&\lesssim \|\sigma\|_{0,2N}\,\|f\|_{L^\infty}\|g\|_{L^\infty},
\end{align*}
where we have used  that $m<n(\rho-1).$ This proves the theorem for $p_1=p_2=p=\infty.$ Note that the proof shows that there is an extension of $T_\sigma$ that is bounded from $L^\infty\times L^\infty$ into $L^\infty,$ mainly
\[
T_\sigma(f,g)=\sum_{j=1}^\infty T_{\sigma_j}(f,g)
\]
where
\[
T_{\sigma_j}(f,g)(x)=\int_{\re^{2n}} \mathcal{K}_j(x,x-y,x-z)f(y)g(z)\,dydz,
\]
with
\begin{equation*}
\mathcal{K}_j(x,y,z)=\int_{\re^{2n}}\sigma_j(x,\xi,\eta) \,e^{i\xi \cdot y}\, e^{i\eta\cdot  z}\,d\xi d\eta,\quad x,y,z\in \re^{n}.
\end{equation*}

\bigskip

We now proceed to prove the theorem in the general case. We recall
that the boundedness properties in Lebesgue spaces for operators
corresponding to the class $BS^0_{1,\delta}$ for $0\le \delta<1$ are
well-known (see introduction); therefore we will work with $\rho<1$.
Moreover, since $BS^m_{\rho,\delta}\subset BS^m_{\rho,\rho}$ for
$\delta\le \rho,$ we will assume $\delta=\rho,$ $0\le \rho<1.$
Define on $BS^m_{\rho,\rho}\times L^{p_1}\times L^{p_2}$ the
trilinear operator $T$  given by
\[
T(\sigma,f,g):=T_\sigma(f,g).
\]
In the following we will use the notation $T:BS^m_{\rho,\rho}\times X\times Y\to Z$  to express the fact that $T$ maps continuously from $BS^m_{\rho,\rho}\times X \times Y$ into $Z:$ there exists $N\in\na_0,$ possibly depending on $m$ and $\rho,$ such that
\[
\norm{T(\sigma,f,g)}{Z}\lesssim \norm{\sigma}{N,N}\norm{f}{X}\norm{g}{Y},
\]
for all $\sigma\in BS^m_{\rho,\rho},$ $f\in X,$ $g\in Y.$
\begin{figure}[ht] 
   \begin{center}
   \includegraphics[width=4in]{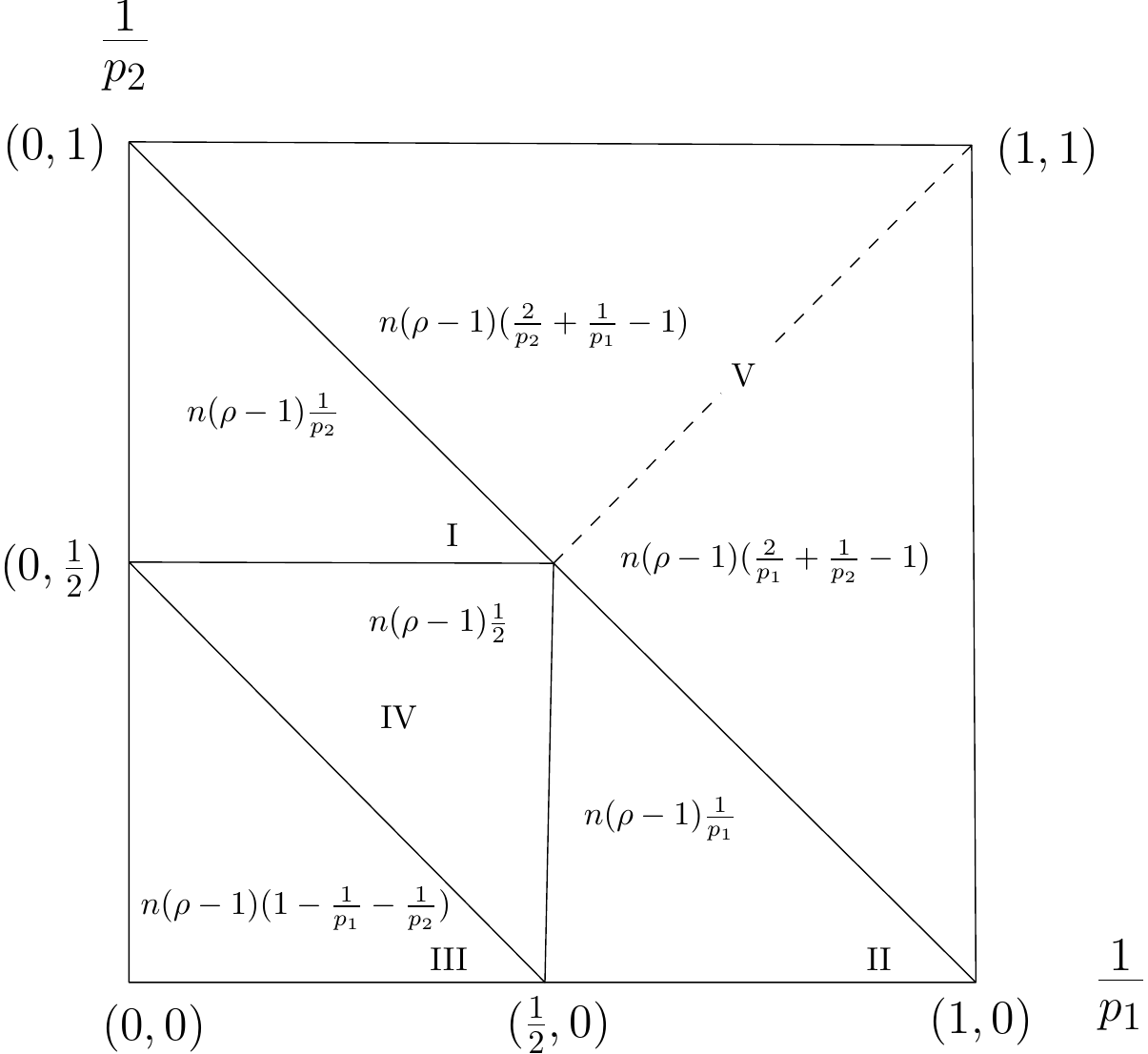}
\end{center}
   \caption{Value of $m(p_1,p_2)$ as given by Theorem~\ref{thm:main2}}
   \label{fig1:main2}
\end{figure}

\medskip

We first prove (i) (case $p>1$). The case $p_1=p_2=p=\infty$ proved above and
Theorem~\ref{symbolic} yield
\begin{enumerate}
\item[\tiny$\bullet$] $T: BS^m_{\rho,\rho}\times L^{\infty}\times L^{\infty}\to L^\infty$ for $m<n(\rho-1)$ (point $(0,0)$ in Figure~\ref{fig1:main2}),
\item[\tiny$\bullet$] $T: BS^m_{\rho,\rho}\times L^{1}\times L^{\infty}\to L^1$ for $m<n(\rho-1)$ (point $(1,0)$ in Figure~\ref{fig1:main2}),
\item[\tiny$\bullet$] $T: BS^m_{\rho,\rho}\times L^{\infty}\times L^{1}\to L^1$ for $m<n(\rho-1)$ (point $(0,1)$ in Figure~\ref{fig1:main2}).
\end{enumerate}
Moreover, by Theorem~\ref{MRS} we have
\begin{enumerate}
\item[\tiny$\bullet$] $T: BS^m_{\rho,\rho}\times L^{2}\times L^{2}\to L^1$ for $m<\frac{n}{2}(\rho-1)$ (point $(\fr{1}{2},\fr{1}{2})$ in Figure~\ref{fig1:main2}),
\item[\tiny$\bullet$] $T: BS^m_{\rho,\rho}\times L^{2}\times L^{\infty}\to L^2$ for $m<\frac{n}{2}(\rho-1)$ (point $(\fr{1}{2},0)$ in Figure~\ref{fig1:main2}),
\item[\tiny$\bullet$] $T: BS^m_{\rho,\rho}\times L^{\infty}\times L^{2}\to L^2$ for $m<\frac{n}{2}(\rho-1)$ (point $(0,\fr{1}{2})$ in Figure~\ref{fig1:main2}).
\end{enumerate}

We now recall the  following modified version of the bilinear
H\"ormander classes (see Section~\ref{lessderivatives}):
\[
BS^{m}_{\rho,\rho,N,N}:=\{\sigma(x,\xi,\eta)\in C^N(\re^{3n}): \|\sigma\|_{N,N}<\infty \}
\]
where  $N\in \na_0$  and, as
always,
\[
\norm{\sigma}{N,N} := \mathop{\sup_{\abs{\alpha}\leq N}}_{ \abs{\beta},\abs{\gamma} \leq N}
 \sup_{x, \xi, \eta\in\re^n} {\derivs}\upxe^{-m - \rho \abs{\alpha}+ \rho(\abs{\beta}+\abs{\gamma})}.
 \]

 Since  $ BS^{m}_{\rho,\rho}$ is dense in $ BS^{m}_{\rho,\rho,N,N},$ the above mentioned endpoint results also hold if $ BS^{m}_{\rho,\rho}$ is replaced with $ BS^{m}_{\rho,\rho,N,N}$ for
 large enough $N$ possibly depending on $\rho$ and $m.$   Lemma~\ref{interpolation} and
 trilinear complex interpolation (see the book of Bergh and L\"ofstr\"om \cite[Theorem 4.4.1]{BL}) then yield the thesis of the theorem for $p_1$
and $p_2$ such that $(\frac{1}{p_1},\frac{1}{p_2})$ is on the border
of the triangle with vertices $(0,0),$ $(0,1)$ and $(1,0)$.

The result for $p_1$ and $p_2$ such that
$(\frac{1}{p_1},\frac{1}{p_2})$ is in the interior of the triangle
follows by  bilinear complex interpolation since, as shown in Figure~\ref{fig1:main2},  $m(p_1,p_2)$ is constant
along horizontal segments in region $I,$ $m(p_1,p_2)$ is constant
along  vertical segments in region $II,$ $m(p_1,p_2)$ is constant
along diagonal segments in region $III$ and $m(p_1,p_2)$ is constant
in region $IV.$

We now prove (ii) and (iii) (case $p<1$). Here we have to assume $\rho>0.$ Theorem~\ref{thm:bscz}
yields
\[
T: BS^m_{\rho,\rho}\times L^{1}\times L^{1}\to L^{\frac{1}{2},\infty} \quad \textrm{for $m< 2n(\rho-1)$ (point $(1,1)$ in Figure~\ref{fig1:main2})},
\]
which together with the boundedness properties at the points $(1,0)$
and $(0,1)$ in Figure~\ref{fig1:main2} (as stated above), Lemma~\ref{interpolation},
and trilinear complex interpolation gives that
\[
T: BS^m_{\rho,\rho}\times L^{p_1}\times L^{p_2}\to L^{p,\infty},\quad  m< m(p_1,p_2),
\]
 for $(\frac{1}{p_1},\frac{1}{p_2})$ on the segments joining the
points $(0,1)$ to $(1,1),$  $(1,0)$ to $(1,1),$ and
$(\frac{1}{2},\frac{1}{2})$ to $(1,1),$ in Figure~\ref{fig1:main2}. This gives Part (iii).

\begin{figure}[h] 
   \begin{center}
   \includegraphics[width=2.7in]{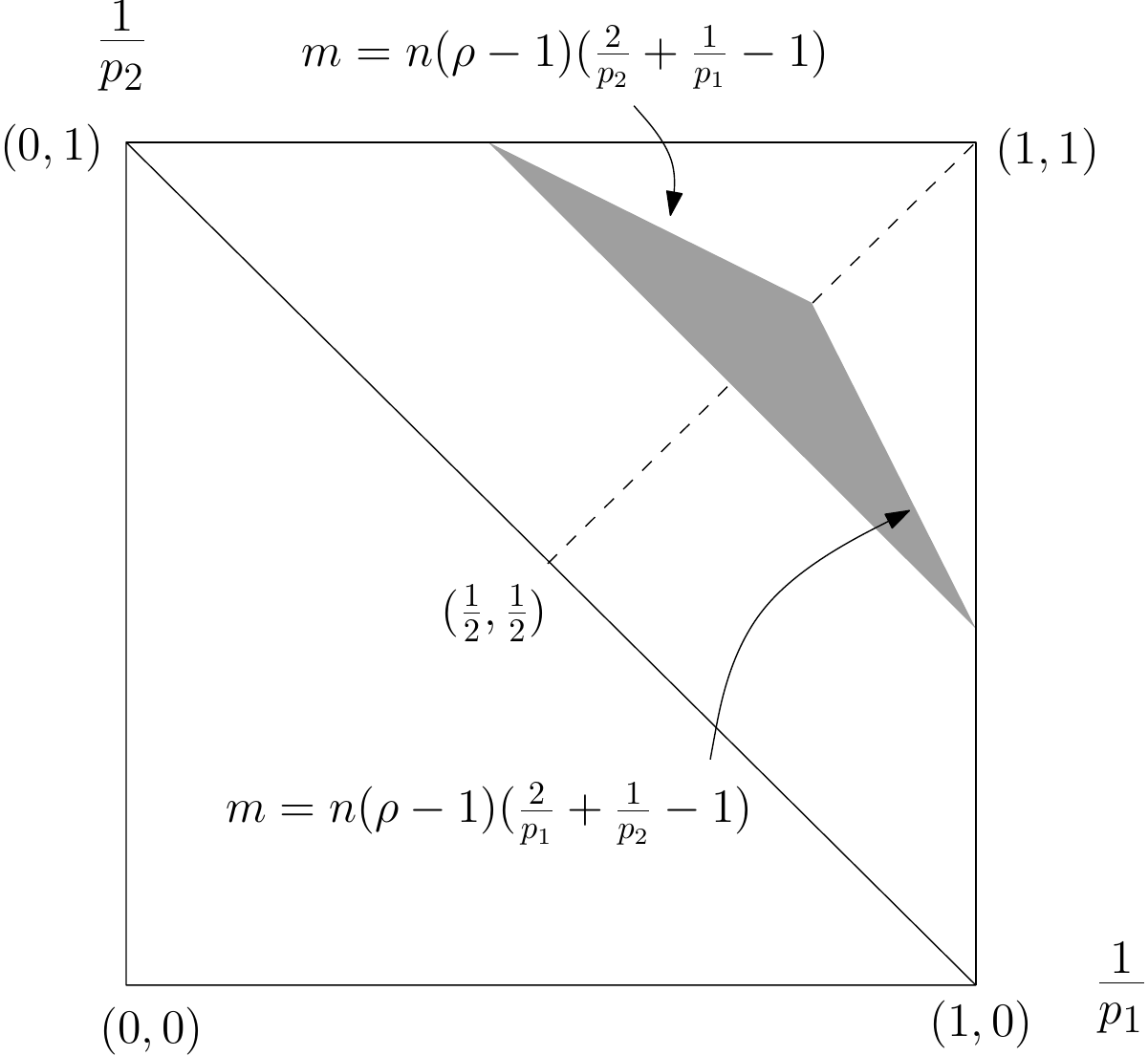} $\qquad$
    \includegraphics[width=2.7in]{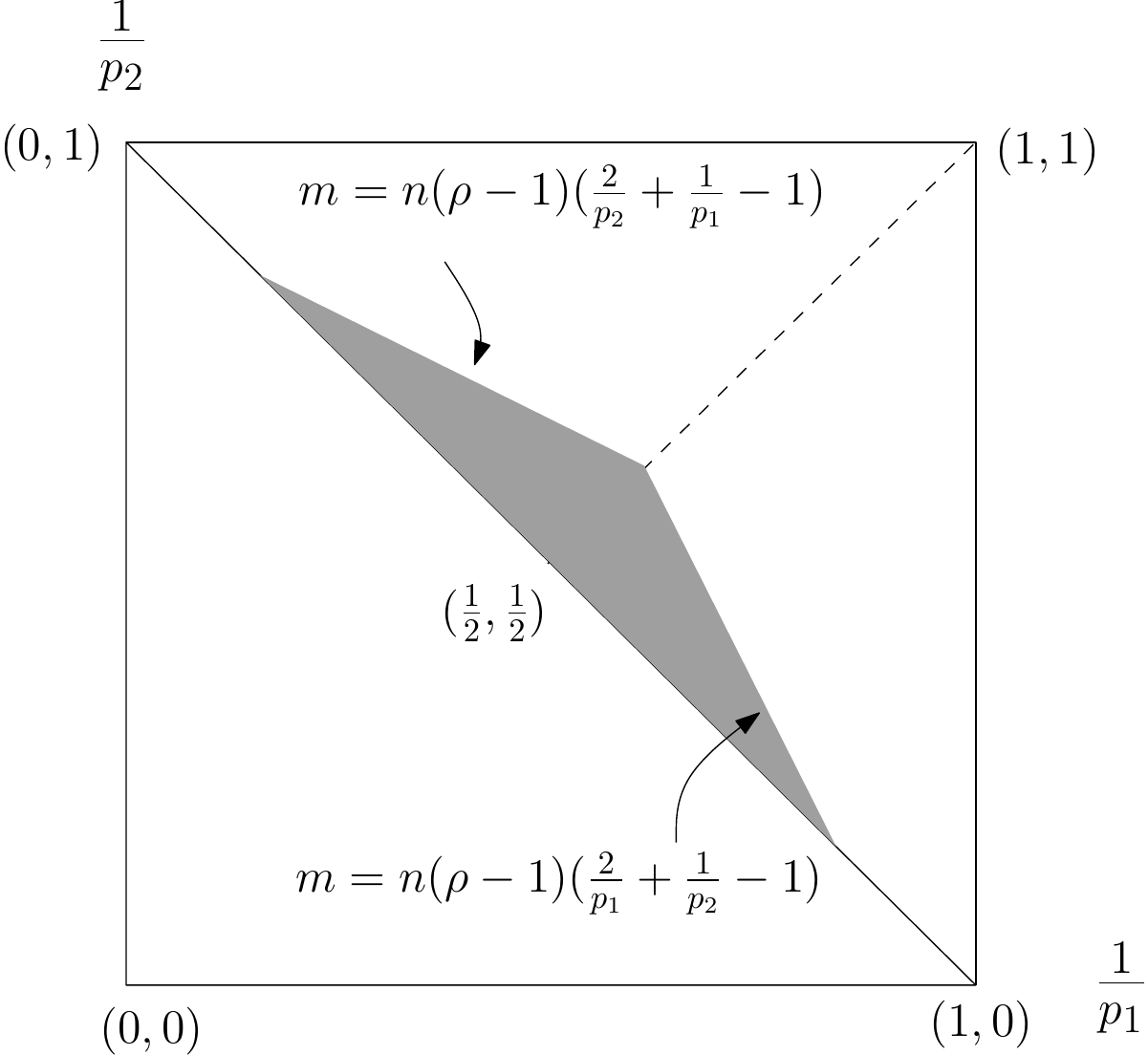}
   \end{center}
   \caption{Case $p<1$ of Theorem~\ref{thm:main2}}
   \label{fig2:main2}
\end{figure}

For Part (ii) consider the shaded triangle as  indicated in each case presented in
Figure~\ref{fig2:main2}. The value
$m(p_1,p_2)$ is constant, say $m,$ on the upper border of this
triangle, which is given by two segments with equations $m= n(\rho-1) (2/p_1+ 1/p_2 -
1)$ (inside triangle with vertices $(1,1),$
$(\frac{1}{2},\frac{1}{2}),$ and $(1,0)$) and $m= n(\rho-1) (2/p_2+
1/p_1 - 1)$ (inside triangle with vertices $(1,1),$
$(\frac{1}{2},\frac{1}{2}),$ and $(1,0)$). Then Part (ii) follows by bilinear real interpolation using
the weak type estimates obtained above for the vertices of the shaded triangle.

\end{proof}

\begin{remark}\label{caseLinfty} We note that the proof of Theorem~\ref{thm:main2} given for the case $p_1=p_2=p=\infty$ does not require any assumptions on the  derivatives of the symbol $\sigma$ in the space variables. This particular result is included in  \cite[Theorem 3.3]{MRS}, which  yields boundedness properties in Lebesgue spaces of bilinear pseudo-differential operators with rough symbols in the space variables as a consequence of  $\abs{T_\sigma(f,g)}$ being pointwise bounded
in terms of the Hardy-Littlewood  maximal operator evaluated at $f$ and $g.$
For completeness, we have  provided another proof of  the case $p_1=p_2=\infty$ of Theorem~\ref{thm:main2} following the arguments of the corresponding linear result in \cite{KeSt}.
\end{remark}

\section{Proof of Theorem \ref{thm:main3}} \label{sec:main3}

In this section we continue to use   $L^p$ and $\|\cdot\|_{L^p}$ to denote  the Lebesgue space $L^p(\re^n)$ and its norm, respectively. Sometimes it will be necessary to make explicit the variable of integration, say integration with respect to $x,$ in which case we employ  the notation $\|\cdot\|_{L^p(dx)}.$

\begin{proof}[Proof of Theorem \ref{thm:main3}]
Without loss of generality we may assume that the symbol $\sigma$
has compact support in the frequency variables $\xi$ and $\eta.$
Otherwise, define
$\sigma_\varepsilon(x,\xi,\eta):=\varphi(\varepsilon\,\xi,\varepsilon\,\eta)\sigma(x,\xi,\eta),$
where $\varphi$ is a smooth function compactly supported in $B(0,1)$
such that $0\le \varphi\le 1$ and $\varphi(0,0)=1.$ It easily
follows that $C(\sigma_\varepsilon)\lesssim C(\sigma)$ and  that
$\lim_{\varepsilon\to 0}T_{\sigma_\varepsilon}(f,g)=T_\sigma(f,g)$
pointwise for $f$ and $g$ belonging to the class $\mathcal{U}$ of
functions whose Fourier transforms are in $\mathcal{C}^\infty_0.$
Assuming the result for symbols of compact support, by Fatou's
lemma,
\[
\|T_\sigma(f,g)\|_{L^2}\le \liminf_{\varepsilon\to 0}\|T_{\sigma_\varepsilon}(f,g)\|_{L^2}\le C(\sigma)\,\|f\|_{L^2}\|g\|_{L^2},
\]
for $f,\,g\in\mathcal{U}.$ Since $\mathcal{U}$ is dense in $L^2$ the
desired result holds for non-compactly supported symbols as well.

Suppose  first that  $\sigma$ is $x$-independent and define
$\tau(\xi,\eta):=\sigma(x,\xi,\eta).$ Then
$$ \widehat{T_\tau(f,g)}(\xi) = \int_{\rn} \widehat{f}(\xi-\eta) \widehat{g}(\eta) \tau(\xi-\eta,\eta) d\eta,$$
and the Cauchy-Schwarz inequality yields
  \begin{equation*} |\widehat{T_\tau(f,g)}(\xi)| \lesssim \left(\int_{\re^n}|\hat{f}(\xi-\eta)|^2|\hat{g}(\eta)|^2\,d\eta\right)^{\frac{1}{2}} \sup_{\xi\in\rn} \|\tau(\xi-\cdot,\cdot)
  \|_{L^2}.
  \end{equation*}
Integrating in $\xi$ and using Plancherel's theorem it follows that
  \begin{equation} \|T_\tau(f,g)\|_{L^2} \lesssim \|f\|_{L^2} \|g\|_{L^2} \sup_{\xi\in\rn} \|\tau(\xi-\cdot,\cdot) \|_{L^2}, \label{eq:aze}
  \end{equation}
 which implies the desired result.

Next, we continue working with an $x$-independent symbol
$\tau(\xi,\eta)$ in order to get estimates that will be useful later
for
  $x$-dependent symbols.  Let   $\Phi$ be a smooth function compactly supported in $B(0,\sqrt{n})$ such that $0\le \Phi\le1$ and
  $$
  \sum_{k\in {\mathbb Z}^n} \Phi(k-x)=1,\quad  x\in\rn.
$$
For a function $h$ defined in $\re^n$ and $l\in\ent^n,$ we set
$h_l(x):=\Phi(x-l)h(x).$
  We will show that for every $N\in\na$
 \begin{align}
 \|\Phi(\cdot-l)T_\tau(f,g)\|_{L^2} \lesssim
 &  \sup_{\genfrac{}{}{0pt}{}{\xi\in\rn}{\abs{\alpha}\le 2N}} \|\partial_\xi^\alpha \tau(\xi-\cdot,\cdot) \|_{L^2}\sum_{j,k\in{\mathbb Z}^n} \frac{\|f_j\|_{L^2} \|g_k\|_{L^2}}{\left(1+|l-j|+|l-k|\right)^N}, \label{eq:offdiag}
\end{align}
for all $l\in\ent^n$ and with constants independent of $\tau,$ $l,$
$f$ and $g.$

We  have
 $$ \Phi(x-l)T_\tau(f,g)(x) = \sum_{j,k \in {\mathbb Z}^n} \Phi(x-l)T_\tau(f_j,g_k)(x),\quad x\in \re^n,$$
 and therefore \eqref{eq:offdiag} will follow from the estimate
 \begin{equation}\label{enough}
 \left\|\Phi(\cdot-l)T_\tau(f_j,g_k)\right\|_{L^2} \lesssim   \sup_{\genfrac{}{}{0pt}{}{\xi\in\rn}{\abs{\alpha}\le 2N}} \|\partial_\xi^\alpha \tau(\xi-\cdot,\cdot) \|_{L^2} \frac{\|f_j\|_{L^2} \|g_k\|_{L^2}}{\left(1+|l-j|+|l-k|\right)^N}.
\end{equation}

 Fix $l\in\ent^n.$ When $j,\,k\in\ent^n$ are such that $|l-j|+|l-k|\leq 10$, we  apply \eqref{eq:aze}:
\begin{align*}
 \left\|\Phi(\cdot-l)T_\tau(f_j,g_k)\right\|_{L^2} &  \leq \left\|T_\tau(f_j,g_k)\right\|_{L^2} \\
 & \lesssim \sup_{\xi\in\rn} \|\tau(\xi-\cdot,\cdot) \|_{L^2} \| f_j\|_{L^2} \|\ g_k\|_{L^2}  \\
 &  \lesssim  \sup_{\xi\in\rn} \|\tau(\xi-\cdot,\cdot)  \|_{L^2} \frac{\|f_j\|_{L^2} \|g_k\|_{L^2}}{\left(1+|l-j|+|l-k|\right)^N},
\end{align*}
for every integer $N$ and therefore \eqref{enough} holds.

We now consider $j$ and $k$ such that  $|l-j|+|l-k|\geq 10$ and,
without loss of generality, we assume  that $|l-j|\geq |l-k|$. Then
\begin{align*}
 T_\tau(f_j,g_k)(x) & = \int_{{\mathbb R}^{2n}} \left(\int_{{\mathbb R}^{2n}} e^{i\left(\xi\cdot(x-y)+\eta\cdot(x-z)\right)} \tau(\xi,\eta) d\xi d\eta\right) f_j(y) g_k(z) dy dz \\
& = \int_{{\mathbb R}^{2n}} \left(\int_{{\mathbb R}^{2n}} e^{i\left(\xi\cdot(x-y)+\eta\cdot(x-z)\right)}
(1-\Delta_{\xi})^{N} \tau(\xi,\eta) d\xi d\eta\right) \frac{f_j(y) g_k(z) \,dy dz}{(1+|x-y|^2)^{N}}\\
&=\int_{\re^{2n}}\mathcal{F}_{2n}((1-\Delta_{\xi})^{N} \tau)(y-x,z-x)\frac{f_j(y) g_k(z) \,dy dz}{(1+|x-y|^2)^{N}},
\end{align*}
where $\mathcal{F}_{2n}$ denotes the Fourier transform in
$\re^{2n}.$
 Multiplying by $\Phi(x-l)$ and using the Sobolev embedding
$W^{P,2} \subset L^\infty$ for any $P>n/2$, by fixing $x\in \re^n,$
it follows that
\begin{align*}
& \left|\Phi(x-l) T_\tau(f_j,g_k)(x)\right| \\
 &  \lesssim \sup_{a\in\re^n} \left|\Phi(a-l) \int_{{\mathbb R}^{2n}} \mathcal{F}_{2n}((1-\Delta_{\xi})^{N} \tau)(y-x,z-x)  \frac{f_j(y)  g_k(z)}{(1+|a-y|^2)^{N}} dy dz\right|\\
 & \lesssim  \sup_{|\beta|\leq P} \left\|\int_{{\mathbb R}^{2n}} \mathcal{F}_{2n}((1-\Delta_{\xi})^{N} \tau)(y-x,z-x)\,
(\partial_a^\beta\gamma_{l,N})(a,y)\,f_j(y)  g_k(z) dy dz\right\|_{L^2(da)},
\end{align*}
where $\gamma_{l,N}(a,y):= \frac{\Phi(a-l)}{(1+|a-y|^2)^{N}}.$
Therefore,
\begin{align*}
& \left\|\Phi(\cdot-l)T_\tau(f_j,g_k)\right\|_{L^2}  \\
& \lesssim  \sup_{|\beta|\leq P} \left\|\int_{{\mathbb R}^{2n}} \mathcal{F}_{2n}((1-\Delta_{\xi})^{N} \tau)(y-x,z-x) \, (\partial_a^\beta\gamma_{l,N})(a,y)\,f_j(y)  g_k(z) dy dz\right\|_{L^2(dadx)}\\
&  = \sup_{|\beta|\leq P} \left\| \,T_{((1-\Delta_{\xi})^{N} \tau)}\left(\partial_a^\beta \left(\frac{\Phi(a-l) }{(1+|a-\cdot|^2)^{N}}\right)f_j(\cdot), g_k(\cdot)\right) (x)
  \right\|_{L^2(dadx)}.
\end{align*}
Applying (\ref{eq:aze}) to $T_{((1-\Delta_{\xi})^{N} \tau)}$ then
yields,
\begin{align*}
& \left\|\Phi(\cdot-l)T_\tau(f_j,g_k)\right\|_{L^2}  \\
& \lesssim \sup_{\xi\in\rn} \|(1-\Delta_{\xi})^{N} \tau(\xi-\cdot,\cdot)\|_{L^2}  \sup_{\abs{\beta}\le P}\left\|\partial_a^\beta \left(\frac{\Phi(a-l) }{(1+|a-y|^2)^{N}}\right)f_j(y)\right\|_{L^2(dady)} \|g_k\|_{L^2}\\
& \lesssim \sup_{\xi\in\rn} \|(1-\Delta_{\xi})^{N} \tau(\xi-\cdot,\cdot)\|_{L^2} \frac{\|f_j\|_{L^2} \|g_k\|_{L^2}}{(1+\abs{l-j}^2)^N},
\end{align*}
 giving \eqref{enough}, where we have used that
\[
\left\|\partial_a^\beta \left(\frac{\Phi(a-l) }{(1+|a-y|^2)^{N}}\right)\right\|_{L^2(da)}\lesssim \frac{1}{(1+|l-j|^2)^N}, \qquad y\in B(j,\sqrt{n}).
\]

 Consider now an $x$-dependent symbol. Then
$$T_\sigma(f,g)(x)= U_{x}(f,g)(x),$$
where
\begin{align*}
U_{y}(f,g)(x):= \int_{{\mathbb R}^{2n}} e^{ix\cdot(\xi+\eta)} \widehat{f}(\xi)\widehat{g}(\eta)\sigma(y,\xi,\eta)d\xi d\eta, \quad x,\, y\in \re^n.
\end{align*}
Fixing $x\in\re^n,$ $l\in\ent^n,$ and using the Sobolev embedding
$W^{s,2} \hookrightarrow L^\infty$ for an integer $s>n/2$, we get
\begin{align*}
 \left|\Phi(x-l) T_\sigma(f,g)(x) \right|&\le  \sup_{y\in\re^n}|\Phi(y-l) U_{y}(f,g)(x)|\\
 &\le \sum_{|\beta|\leq s} \|\partial^\beta_y\left(\Phi(y-l) U_{y}(f,g)(x)\right)\|_{L^2(dy)}\\
 &\lesssim \sum_{|\beta|\leq s} \|\chi_{B(l)}(y)\partial^\beta_yU_{y}(f,g)(x)\|_{L^2(dy)},
 \end{align*}
 where $B(l)=B(l,\sqrt{n}).$ Let $\tilde{\Phi}$ be a smooth function supported in $B(0,\sqrt{n})$ such that $\tilde{\Phi}\Phi=\Phi.$ Multiplying by $\tilde{\Phi}(x-l),$
integrating in $x$ and using Fubini's Theorem, we obtain
$$ \left\|\Phi(\cdot-l) T_\sigma(f,g)\right\|_{L^2} \lesssim \sum_{|\beta|\leq s}  \left\|\chi_{B(l)}(y) \left\|\chi_{B(l)}(x)\tilde{\Phi}(x-l)\partial^\beta_y U_{y}(f,g)(x) \right\|_{L^2(dx)} \right\|_{L^2(dy)}.$$
For each  $\beta\in\na^n_0,$ $\abs{\beta}\le s,$ and $y\in\re^n,$ we
look at $\partial^\beta_y U_{y}$ as  the bilinear multiplier defined
by the $x$-independent symbol
$$\tau_y^\beta(\xi,\eta):= \partial_{y}^{\beta} \sigma(y,\xi,\eta).$$
Then applying (\ref{eq:offdiag}), which also holds if  on its left
hand side $\Phi$ is replaced by $\tilde{\Phi}$, we deduce
\begin{align*}
& \left\| \Phi(\cdot-l)T_\sigma(f,g)\right\|_{L^2} \\
 &\lesssim\sum_{|\beta|\leq s} \sum_{j,k\in {\mathbb Z}^n}   \sup_{\genfrac{}{}{0pt}{}{\xi\in\rn}{|\alpha|\le 2N}}
 \sup_{y\in \re^n} \|\partial_\xi^\alpha \tau_y^\beta(\xi-\cdot,\cdot) \|_{L^2}
   \frac{\|f_j\|_{L^2} \|g_k\|_{L^2}}{\left(1+|l-j|+|l-k|\right)^N},
\end{align*}
which implies
$$ \left\|\Phi(\cdot-l) T_\sigma(f,g)\right\|_{L^2} \lesssim C(\sigma)\sum_{j,k\in\ent^n} \frac{\|f_j\|_{L^2} \|g_k\|_{L^2}}{\left(1+|l-j|+|l-k|\right)^N} ,$$
with
$$ C(\sigma):= \sup_{\genfrac{}{}{0pt}{}{|\beta|\le s}{|\alpha|\le 2N}} \sup_{\xi,y\in\rn}  \|\partial_\xi^\alpha \partial_{y}^{\beta} \sigma(y,\xi-\cdot,\cdot) \|_{L^2}.$$

Using H\"older's inequality we then obtain that
\begin{align*}
& \left\|\Phi(\cdot-l) T_\sigma(f,g)\right\|^2_{L^2}\\
& \lesssim C(\sigma)^2\sum_{j,k\in\ent^n} \frac{\|f_j\|^2_{L^2} \|g_k\|^2_{L^2}}{\left(1+|l-j|+|l-k|\right)^N}  \sum_{j,k\in\ent^n} \frac{1}{\left(1+|l-j|+|l-k|\right)^N}.
\end{align*}
Choosing  $N>2n,$ the second sum on the right hand side is finite
and   after summing over $l\in{\mathbb Z}^n$, we conclude that
$$\sum_{l\in\ent^n}\left\|T_\sigma(f,g)_l\right\|^2_{L^2}=\sum_{l\in\ent^n} \left\|\Phi(\cdot-l) T_\sigma(f,g)\right\|^2_{L^2} \lesssim C(\sigma)^2\sum_{j\in\ent^n} \|f_j\|^2_{L^2} \sum_{k\in\ent^n} \|g_k\|^2_{L^2}.$$
The desired result follows by taking $N=2n+1,$  $s=[\frac{n}{2}]+1,$
and noting that $\|h\|^2_{L^2}\sim \sum_{j} \|h_j\|^2$.
\end{proof}

\section{Proof of Theorem \ref{thm:main4}} \label{sec:main4}

The following lemmas, whose proof are included in  Section~\ref{sec:prooflemmas}, will be used to prove Theorem \ref{thm:main4}.

\begin{lemma}\label{compactsupp} Let $m\in\re,$ $0\le \delta,\,\rho\le 1,$   $\sigma \in \bs$ and $N>n.$
\begin{enumerate}[(a)]
\item If $0<R\le 1$ and  $\supp (\sigma) \subset
\{\xxe: \absx +\abse \leq R \}$ then
\[
\norm{T_\sigma(f,g)}{L^\infty} \lesssim R^{2n} \norm{\sigma}{0,2N} \norm{f}{L^\infty}
\norm{g}{L^\infty},\qquad f,\,g\in{L^\infty}.
\]
\item If  $ R\ge 1$  and $\supp(\sigma) \subset \{ R \leq \absx + \abse \leq 4 R\}$ then
\[
\norm{T_\sigma(f,g)}{L^\infty} \lesssim R^{(1-\rho)n + m} \norm{\sigma}{0,2N}\norm{f}{L^\infty}\norm{g}{L^\infty}, \qquad f,\,g\in{L^\infty}.
\]
\end{enumerate}
\end{lemma}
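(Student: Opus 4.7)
The plan is to reduce both parts to estimating $\sup_x \iint |\mathcal{K}(x,y,z)|\,dy\,dz$, where
\begin{equation*}
\mathcal{K}(x,y,z)=\int_{\re^{2n}}\sigma(x,\xi,\eta)\,e^{i(y\cdot\xi+z\cdot\eta)}\,d\xi\,d\eta
\end{equation*}
is the distributional kernel of $T_\sigma$ (smooth in $(y,z)$ because $\sigma$ is compactly supported in the frequency variables), so that
\begin{equation*}
T_\sigma(f,g)(x)=\int_{\re^{2n}}\mathcal{K}(x,x-y,x-z)f(y)g(z)\,dy\,dz
\end{equation*}
immediately yields $\norm{T_\sigma(f,g)}{L^\infty}\le\norm{f}{L^\infty}\norm{g}{L^\infty}\sup_x\iint|\mathcal{K}(x,y,z)|\,dy\,dz$.

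For part (a), since $\absx+\abse\le R\le 1$ on $\supp\sigma$, the H\"ormander weight $(1+\absx+\abse)^{m-\rho(\absb+\absg)}$ is comparable to a constant, so each derivative $\partial_\xi^\beta\partial_\eta^\gamma\sigma$ is pointwise controlled by a constant times $\norm{\sigma}{0,\absb+\absg}$ on $\supp\sigma$. Integrating by parts via the identity $(1-\Delta_\xi-\Delta_\eta)^N e^{i(y\cdot\xi+z\cdot\eta)}=(1+|y|^2+|z|^2)^N e^{i(y\cdot\xi+z\cdot\eta)}$, and using $|\supp\sigma|\lesssim R^{2n}$, I would obtain
\begin{equation*}
(1+|y|^2+|z|^2)^N|\mathcal{K}(x,y,z)|\lesssim\norm{\sigma}{0,2N}R^{2n}.
\end{equation*}
The conclusion follows by dividing by $(1+|y|^2+|z|^2)^N$ and integrating, the condition $N>n$ being exactly what makes $\iint(1+|y|^2+|z|^2)^{-N}\,dy\,dz$ finite.

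For part (b) the same direct integration by parts is too lossy: combining $|\sigma|\lesssim R^m$ and $|\supp\sigma|\sim R^{2n}$ with the natural scaled weight $(1+R^{2\rho}(|y|^2+|z|^2))^N$ only gives $\iint|\mathcal{K}|\,dy\,dz\lesssim R^{m+2n(1-\rho)}\norm{\sigma}{0,2N}$, overshooting the target exponent $m+n(1-\rho)$ by $n(1-\rho)$. I would instead apply Cauchy--Schwarz with the weight $w(y,z):=(1+R^{2\rho}(|y|^2+|z|^2))^N$:
\begin{equation*}
\iint|\mathcal{K}(x,y,z)|\,dy\,dz\le\norm{w\,\mathcal{K}(x,\cdot,\cdot)}{L^2(\re^{2n})}\cdot\norm{w^{-1}}{L^2(\re^{2n})}.
\end{equation*}
By Plancherel's theorem in $(y,z)$, together with $(1-R^{2\rho}\Delta_\xi-R^{2\rho}\Delta_\eta)^N e^{i(y\cdot\xi+z\cdot\eta)}=w(y,z)e^{i(y\cdot\xi+z\cdot\eta)}$, the first factor equals $\norm{(1-R^{2\rho}\Delta_\xi-R^{2\rho}\Delta_\eta)^N\sigma(x,\cdot,\cdot)}{L^2(\re^{2n})}$; each term of the form $R^{2\rho k}\Delta_\xi^{k_1}\Delta_\eta^{k_2}\sigma$ with $k_1+k_2=k\le N$ is pointwise $\lesssim\norm{\sigma}{0,2N}R^m$ on the annular support $\absx+\abse\sim R$ (the prefactor $R^{2\rho k}$ exactly offsets the $R^{-2\rho k}$ loss coming from the $2k$ frequency derivatives), so multiplying by $|\supp\sigma|^{1/2}\sim R^n$ bounds this $L^2$-norm by $\norm{\sigma}{0,2N}R^{m+n}$. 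The second factor is evaluated by rescaling $(u,v)=(R^\rho y,R^\rho z)$ to be $\lesssim R^{-n\rho}$ (here $N>n/2$ suffices, which is implied by $N>n$). Multiplying delivers the claimed bound $R^{m+n(1-\rho)}\norm{\sigma}{0,2N}$.

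The main obstacle, and the decisive step, is choosing the right norm in part (b): the $L^2$-based argument trades the full volume factor $|\supp\sigma|\sim R^{2n}$ for its square root $R^n$, producing exactly the extra gain $R^{-n}$ that is required to reach the sharp exponent $m+n(1-\rho)$. A purely $L^\infty$-kernel estimate combined with integration by parts cannot see this cancellation and only achieves $m+2n(1-\rho)$, strictly weaker whenever $\rho<1$; this $L^2$ gain is also precisely what is needed downstream in the argument for the case $p_1=p_2=p=\infty$ of Theorem~\ref{thm:main2}, where the factor $2^{j(m+n(1-\rho))}$ must be summable under the hypothesis $m<n(\rho-1)$.
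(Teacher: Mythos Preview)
Your proof is correct and, for part~(a), essentially identical to the paper's. For part~(b) you use the same underlying mechanism as the paper---Cauchy--Schwarz combined with Plancherel at the natural scale $R^{-\rho}$---but organize it more efficiently: the paper splits the $(y,z)$-integral into the regions $|y|+|z|\le R^{-\rho}$ and $|y|+|z|\ge R^{-\rho}$, applying Cauchy--Schwarz and Plancherel on the first region with no weight and on the second with the homogeneous weight $|(y,z)|^{2N}$ (via $(-\Delta_\xi-\Delta_\eta)^N$), whereas you fold both regions into a single step via the scaled inhomogeneous weight $(1+R^{2\rho}|(y,z)|^2)^N$. Your choice has the advantage of avoiding the dyadic split and making the cancellation $R^{2\rho k}\cdot R^{-2\rho k}$ explicit term by term; the paper's split has the mild advantage of separating the ``no derivatives'' and ``$2N$ derivatives'' endpoints cleanly. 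The two arguments are equivalent in strength and spirit, and your observation that $N>n/2$ already suffices for the $\|w^{-1}\|_{L^2}$ factor (while $N>n$ is forced only by part~(a)) is accurate.
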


\begin{lemma}\label{smallsupp} Let $Q \subset \re^n$ be a cube with diameter $d$ and
$\sigma \in \bs$ with $m = n (\rho-1),$ $0\le \delta,\,\rho\le 1,$ such that
\[
\supp (\sigma) \subset \{ \xxe : \absx + \abse \leq  d^{-1} \}.
\]
Then, for every $N > n$,
\[
\frac{1}{\abs{Q}} \int \abs {T_\sigma(f,g)(x) - T_\sigma(f,g)_Q} \, dx \lesssim
\norm{\sigma}{1,2N} \norm{f}{L^\infty} \norm{g}{L^\infty},\qquad f,\,g\in\mathcal{S},
\]
with constants only depending on $n,\,N,\,\rho$ and $\delta$. Here $T_\sigma(f,g)_Q$ is the average of
$T_\sigma(f,g)$ over $Q$.
\end{lemma}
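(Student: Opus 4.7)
The plan is to bound the oscillation of $T_\sigma(f,g)$ on $Q$ by interpolating between the trivial sup-norm bound and a mean-value bound on the gradient, after splitting $\sigma$ in the frequency variables via a dyadic partition of unity. First observe that
\[
\frac{1}{\abs{Q}}\int_Q \abs{T_\sigma(f,g)(x)-T_\sigma(f,g)_Q}\,dx \leq \sup_{x,x'\in Q}\abs{T_\sigma(f,g)(x)-T_\sigma(f,g)(x')},
\]
and that for any $x,x'\in Q$ one has the two competing estimates
\[
\abs{T_\sigma(f,g)(x)-T_\sigma(f,g)(x')} \leq \min\bigl(2\norm{T_\sigma(f,g)}{L^\infty},\, d\,\norm{\nabla T_\sigma(f,g)}{L^\infty}\bigr).
\]

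Next, pick a smooth dyadic resolution $\{\psi_j\}_{j\ge 0}$ with $\psi_0$ supported in $\{\absx+\abse\le 2\}$ and $\psi_j$ supported in $\{2^{j-1}\le \absx+\abse\le 2^{j+1}\}$ for $j\ge 1$, and set $\sigma_j=\sigma\psi_j$. The hypothesis $\supp(\sigma)\subset\{\absx+\abse\le d^{-1}\}$ makes the decomposition $\sigma=\sum_j\sigma_j$ a finite sum, with only those $j$ satisfying $2^{j-1}\le d^{-1}$ contributing. For each piece, use the differentiation identity
\[
\partial_{x_k}T_{\sigma_j}(f,g) = T_{\partial_{x_k}\sigma_j}(f,g) + T_{i(\xi_k+\eta_k)\sigma_j}(f,g),
\]
so that $\nabla T_{\sigma_j}(f,g)$ is a bilinear pseudodifferential operator whose symbol is supported in the same dyadic annulus but lies in $BS^{m+1}_{\rho,\delta}$. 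Apply Lemma~\ref{compactsupp}: part (a) for $j=0$ and part (b) for $j\ge 1$, with $R\sim 2^j$. Using $m=n(\rho-1)$, the critical exponent cancellation $(1-\rho)n+m=0$ yields
\[
\norm{T_{\sigma_j}(f,g)}{L^\infty}\lesssim \norm{\sigma}{0,2N}\norm{f}{L^\infty}\norm{g}{L^\infty}, \qquad \norm{\nabla T_{\sigma_j}(f,g)}{L^\infty}\lesssim 2^{j}\,\norm{\sigma}{1,2N}\norm{f}{L^\infty}\norm{g}{L^\infty},
\]
with the $2^j$ factor coming either from $(\xi_k+\eta_k)$ on the annulus or from the class shift $BS^{m+1}_{\rho,\delta}$, both dominating the $2^{j\delta}$ coming from $\partial_{x_k}\sigma_j\in BS^{m+\delta}_{\rho,\delta}$.

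Summing the two bounds through the $\min$ gives
\[
\sup_{x,x'\in Q}\abs{T_\sigma(f,g)(x)-T_\sigma(f,g)(x')}\lesssim \Biggl(\sum_{j:\,2^{j-1}\le d^{-1}}\min\bigl(2,\,d\cdot 2^{j}\bigr)\Biggr)\norm{\sigma}{1,2N}\norm{f}{L^\infty}\norm{g}{L^\infty}.
\]
Because the frequency support cuts off the sum at $2^j\lesssim d^{-1}$, one has $d\cdot 2^j\lesssim 1$ throughout, so the gradient branch dominates and $\sum_j d\cdot 2^j$ is a geometric sum with largest term $\sim d\cdot d^{-1}=1$; hence the total is $O(1)$, independent of $d$. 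This gives the desired $\norm{\sigma}{1,2N}\norm{f}{L^\infty}\norm{g}{L^\infty}$ bound.

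The main obstacle I expect is the bookkeeping in the previous step: verifying that the seminorms of $\partial_{x_k}\sigma_j$ and $(\xi_k+\eta_k)\sigma_j$ in the appropriate shifted H\"ormander classes are controlled by $\norm{\sigma}{1,2N}$ up to harmless constants (depending on the partition of unity), and confirming that the critical exponent cancellation $(1-\rho)n+m=0$ from the hypothesis $m=n(\rho-1)$ is precisely what converts the raw $2^{j((1-\rho)n+m+1)}$ factor from Lemma~\ref{compactsupp}(b) into the clean $2^j$ factor. Once this frequency-localized estimate is in hand, the whole argument amounts to balancing the spatial scale $d$ of $Q$ against the frequency scale $d^{-1}$ of $\sigma$, which is exactly how the BMO condition manifests at the borderline order $m=n(\rho-1)$.
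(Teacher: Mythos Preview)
Your proof is correct and follows essentially the same approach as the paper: reduce the oscillation over $Q$ to a gradient bound via the mean value theorem, observe that $\partial_{x_k}T_\sigma$ has symbol $i(\xi_k+\eta_k)\sigma+\partial_{x_k}\sigma$ supported in the same region, and invoke Lemma~\ref{compactsupp}. The paper carries this out in one stroke without the dyadic decomposition or the $\min$ (which, as you yourself note, is never active since $d\cdot 2^j\lesssim 1$ on the support); your Littlewood--Paley localization simply makes the application of Lemma~\ref{compactsupp}(b) to a ball-supported symbol explicit rather than implicit.
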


\begin{lemma}\label{bigsupport1} Let  $d>0$ and
 $\sigma \in \bs,$ $m = n(\rho-1),$ $0\le \delta,\rho\le 1,$ such that
\[
\supp (\sigma) \subset \{ \xxe : \absx + \abse \geq d^{-1} \}.
\]
Let $\phi \in \mathcal{S},$  $\phi \geq 0$,  and $\supp{( \hat
\phi)} \subset \{z \in \re^n : \abs{z} \leq \frac{1}{8}d^{-\rho}\}.$ For $f,g \in \mathcal{S}$,
define
\[
R(f,g)(x):=\phi^2(x)T_\sigma(f,g)(x)-T_\sigma(\phi f,\phi g)(x).
\]
Then, for every $N > n$, we have
\[
\norm{R(f,g)}{L^\infty} \lesssim \norm{\sigma}{0,2N+1} \norm{f}{L^\infty}\norm{g}{L^\infty}, \qquad f,\,g\in\mathcal{S},
\]
with constants only depending on $n,\,N,\,\rho$ and $\delta$.
\end{lemma}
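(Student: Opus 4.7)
The plan is to expose a ``commutator'' structure in $R(f,g)$ on the Fourier side, exploit the gain of one frequency derivative via a Taylor expansion, and then sum a dyadic decomposition of $\sigma$.

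First I would use the Fourier inversion formula $\phi(x)=(2\pi)^{-n}\int \hat\phi(\lambda) e^{ix\cdot\lambda}\,d\lambda$ together with the identity $\widehat{\phi h}=(2\pi)^{-n}\hat\phi\ast\hat h$ to write, after routine rearrangement,
\[
R(f,g)(x)=\iint \hat\phi(\lambda)\hat\phi(\mu)e^{ix\cdot(\lambda+\mu)} T_{\sigma_{\lambda,\mu}}(f,g)(x)\,d\lambda\,d\mu,
\]
where $\sigma_{\lambda,\mu}(x,\xi,\eta):=\sigma(x,\xi,\eta)-\sigma(x,\xi+\lambda,\eta+\mu)$. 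The fundamental theorem of calculus then yields
\[
\sigma_{\lambda,\mu}(x,\xi,\eta)=-\int_0^1 \bigl[\lambda\cdot(\nabla_\xi\sigma)(x,\xi+t\lambda,\eta+t\mu)+\mu\cdot(\nabla_\eta\sigma)(x,\xi+t\lambda,\eta+t\mu)\bigr]dt,
\]
and each of $\partial_{\xi_i}\sigma$, $\partial_{\eta_i}\sigma$ belongs to $BS^{m-\rho}_{\rho,\delta}$ with norm controlled by $\|\sigma\|_{0,2N+1}$.

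Next I would split $\sigma=\sum_{j\ge J}\sigma_j$ dyadically, with $\sigma_j(x,\xi,\eta)=\sigma(x,\xi,\eta)\psi_j(\xi,\eta)$ and $\psi_j$ supported in $\{|\xi|+|\eta|\sim 2^j\}$, and $2^J\sim d^{-1}$ (so the support hypothesis on $\sigma$ makes the sum start at $j=J$). For each $j$, each $(\lambda,\mu)$ in the support of $\hat\phi\otimes\hat\phi$, and each $t\in[0,1]$, the shifted symbol $(\partial_{\xi_i}\sigma_j)(x,\xi+t\lambda,\eta+t\mu)$ is still supported in an annulus of scale $2^j$ (the shift $|\lambda|,|\mu|\lesssim d^{-\rho}\lesssim 2^j$ is absorbed by fattening the annulus by a fixed factor). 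Applying Lemma~\ref{compactsupp}(b) to this symbol, which has order $m-\rho$, gives
\[
\bigl\|T_{(\partial_{\xi_i}\sigma_j)(\cdot,\cdot+t\lambda,\cdot+t\mu)}(f,g)\bigr\|_{L^\infty}\lesssim 2^{j((1-\rho)n+m-\rho)}\|\sigma\|_{0,2N+1}\|f\|_{L^\infty}\|g\|_{L^\infty},
\]
and with $m=n(\rho-1)$ the exponent reduces to $-j\rho$. Multiplying by the size $|\lambda|,|\mu|\lesssim d^{-\rho}$ of the Taylor coefficient and integrating against $\hat\phi\otimes\hat\phi$, the $j$-th piece of $R$ is bounded by $d^{-\rho}\,2^{-j\rho}\|\sigma\|_{0,2N+1}\|f\|_{L^\infty}\|g\|_{L^\infty}$.

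Summing the geometric series $\sum_{j\ge J}2^{-j\rho}\sim 2^{-J\rho}\sim d^{\rho}$ (for $\rho>0$) cancels the $d^{-\rho}$ in front and yields the stated estimate. The main obstacle is the bookkeeping for the frequency-shifted supports after Taylor expansion; one handles it by choosing the partition of unity with sufficiently fattened annuli so that a shift of size $d^{-\rho}\le 2^j$ is absorbed automatically, or equivalently by invoking a slight variant of Lemma~\ref{compactsupp}(b) with a larger ratio of inner to outer radius. A minor secondary issue is the borderline case $\rho=0$, for which the geometric sum no longer decays; there the hypothesis that $\hat\phi$ is supported in a ball of fixed radius $1/8$ independent of $d$ makes $\phi$ Lipschitz with constant independent of $d$, and one concludes by a direct mean-value estimate of $\phi(x)^2-\phi(y)\phi(z)$ paired against the rapid decay of $\mathcal{K}(x,\cdot,\cdot)$ on scale $d$ furnished by Theorem~\ref{thm:kernelestimates}.
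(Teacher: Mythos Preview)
Your approach is essentially the paper's: both exhibit the difference $\sigma(x,\xi,\eta)-\sigma(x,\xi+\lambda,\eta+\mu)$ on the Fourier side, gain one frequency derivative via the mean value theorem, decompose dyadically, invoke Lemma~\ref{compactsupp}(b), and sum the geometric series $\sum_k 2^{-\rho k}$. The only organizational difference is that the paper decomposes the symbol $\theta$ of $R$ itself (rather than $\sigma$) into pieces $\theta_k=\theta\psi_k$ supported on $\{|\xi|+|\eta|\sim 2^k d^{-1}\}$ and proves $\|\theta_k\|_{0,M}\lesssim 2^{-\rho k}\|\sigma\|_{0,M+1}$ directly; this packages the Taylor gain at the symbol level and sidesteps the bookkeeping you flag about shifted supports. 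The paper also handles $d>1$ by a separate low/high-frequency splitting of $\theta$ (reducing to $d=1$), which you omit. Finally, your aside on $\rho=0$ goes beyond the paper---it sums the same series without isolating that case---but your proposed fix via Theorem~\ref{thm:kernelestimates} is not quite right, since that theorem is stated only for $\rho>0$.
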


\begin{remark} The proofs of the above lemmas show that $BS^{m}_{\rho,\delta}$ can be replaced by $BS^{m}_{\rho,\delta,0,2N},$ $BS^{m}_{\rho,\delta,1,2N}$ and $BS^{m}_{\rho,\delta,0,2N+1},$ respectively (see definition of these spaces in Section~\ref{lessderivatives}).
\end{remark}

\begin{proof}[Proof of Theorem \ref{thm:main4}]
 Given $\sigma \in BS^{m}_{\rho, 0}$, with $m=n(\rho-1)$, we have to
 prove that
 \begin{equation}\label{LinfBMO}
\frac{1}{\abs{Q}} \int \abs {T_\sigma(f,g)(x) - T_\sigma(f,g)_Q} \, dx \lesssim  \norm{f}{L^\infty} \norm{g}{L^\infty},
\end{equation}
for all cubes $Q\subset \re^n$  and $f,\,g\in \mathcal{S}.$

Let $Q$ be a cube with diameter $d$ and assume first  that $d \leq
1.$ We write
$$
\sigma(x,\xi,\eta) = \sigma(x,\xi,\eta) (1-\theta(\xi,\eta)) + \sigma(x,\xi,\eta) \theta(\xi,\eta) =: \sigma_1 + \sigma_2,
$$
where $\theta : \rn \times \rn \rightarrow \re$ is a smooth, non-negative function, $\theta(\xi,\eta)=\tilde{\theta}(d\,\xi,d\,\eta)$ with
$$
\supp(\tilde{\theta}) \subset \{(\xi, \eta)\in \rn \times \rn : |\xi|+|\eta| \geq 1\},
$$
and $\tilde{\theta} \equiv 1$ in $\{(\xi, \eta)\in \rn \times \rn :
|\xi|+|\eta| \geq 2 \}$. Since $d \leq 1,$ then $\sigma_1$, $\sigma_2 \in
BS^{m}_{\rho, 0}$ satisfy
\begin{equation}
\norm{\sigma_j}{K,M} \lesssim \norm{\sigma}{K,N}, \quad K,M \in \na_0, j=1,2,
\end{equation}
with constants independent of $d$ and $\sigma.$

Let $\phi$ be as in Lemma \ref{bigsupport1}, this is, $\phi \in
\mathcal{S},$  $\phi \geq 0$,  and $\supp{(
\hat \phi)} \subset \{z \in \re^n : \abs{z} \leq d^{-\rho}/8\}.$ In
addition,  we  assume $\phi \equiv 1$ on $Q$ and, in accordance with the uncertainty principle, we choose $\phi$ such that
$
\|\phi\|_{L^2}\lesssim d^{\frac{n\rho}{2}}.
$
 For $x \in Q$ we have
\begin{align*}
T_\sigma(f,g)(x)& = T_{\sigma_1}(f,g)(x) + T_{\sigma_2}(f,g)(x) =   T_{\sigma_1}(f,g)(x) + \phi^2(x)T_{\sigma_2}(f,g)(x)\\
& = T_{\sigma_1}(f,g)(x) + T_{\sigma_2}(\phi f, \phi g)(x) + R(f,g)(x),
\end{align*}
where $R(f,g)(x) =  \phi^2(x)T_{\sigma_2}(f,g)(x) -
T_{\sigma_2}(\phi f, \phi g)(x).$

In order to get \eqref{LinfBMO}, it is  enough to prove the inequality
\begin{equation}\label{reduc1}
\norm{T_{\sigma_2}(\phi f, \phi g)}{L^1(Q)} \lesssim \, \|\sigma_2\|_{K,M}\,d^n \norm{f}{L^\infty} \norm{g}{L^\infty}, \quad f, g \in \mathcal{S},
\end{equation}
for some $K,\,M\in\na_0.$
Indeed, using \eqref{reduc1} and Lemmas \ref{smallsupp} and \ref{bigsupport1}, for $N > n$,  we write
\begin{align*}
&\frac{1}{|Q|} \int_Q |T_\sigma(f,g)(x) - T_\sigma(f,g)_Q|\, dx\\
&  \leq \frac{1}{|Q|} \int_Q |T_{\sigma_1}(f,g)(x) - T_{\sigma_1}(f,g)_Q|\, dx
+ \frac{2}{|Q|} \norm{T_{\sigma_2}(\phi f, \phi g)}{L^1(Q)} + 2 \norm{R(f,g)}{L^\infty}\\
& \lesssim \left( \norm{\sigma_1}{1,2N} +  \|\sigma_2\|_{K,M} + \norm{\sigma_2}{0,2N+1}\right) \norm{f}{L^\infty} \norm{g}{L^\infty},
\end{align*}
 and therefore \eqref{LinfBMO} holds when the diameter of  $Q$ is less than or equal to 1.

In turn, (\ref{reduc1}) will follow from
 \begin{equation}\label{reduc1-bis}
 \norm{T_{\sigma_2}(\phi f, \phi g)}{L^2} \lesssim  \|\sigma_2\|_{K,M}\, d^{\frac{n}{2}} \norm{f}{L^\infty} \norm{g}{L^\infty}, \quad f, g \in \mathcal{S},
 \end{equation}
 since
 $$ \frac{1}{|Q|} \norm{T_{\sigma_2}(\phi f, \phi g)}{L^1(Q)} \leq \frac{1}{|Q|^{1/2}} \norm{T_{\sigma_2}(\phi f, \phi g)}{L^2(Q)} \leq \frac{1}{|Q|^{1/2}} \norm{T_{\sigma_2}(\phi f, \phi g)}{L^2}.$$
Moreover, because $\phi$ satisfies $\|\phi\|_{L^2} \lesssim
d^{\frac{\rho n}{2}}$, (\ref{reduc1-bis}) can be reduced to proving
that
 \begin{equation}\label{reduc1-2}
 \|T_{\sigma_2}\|_{L^2 \times L^2 \rightarrow L^2} \lesssim  \|\sigma_2\|_{K,M}\,d^{\frac{n}{2}-\rho n}.
 \end{equation}
By Theorem \ref{thm:main3}, the support of $\sigma_2,$ and the fact
that $\sigma_2\in BS^{m}_{\rho,0}$ with $m=n(\rho-1)$ and
$0<\rho<\frac{1}{2},$ we obtain
 \begin{align*}
  \|T_{\sigma_2}\|_{L^2 \times L^2 \rightarrow L^2} & \lesssim \sup_{\genfrac{}{}{0pt}{}{|\beta|\le [\frac{n}{2}]+1}{|\alpha|\le 2(2n+1)}} \sup_{y,\xi\in\rn}  \|\partial_\xi^\alpha \partial_{y}^{\beta} \sigma_2(y,\xi-\cdot,\cdot) \|_{L^2} \\
  & \lesssim \sup_{\xi\in\rn} \|\chi_{\{|\xi-\eta|+|\eta|\geq d^{-1}\}}(\xi,\eta) \left(1+|\xi-\eta|+|\eta|\right)^{m}\|_{L^2(d\eta)} \\
  & \lesssim  \|\sigma_2\|_{K,M}\,\left(\int_{|\eta|\geq d^{-1}} |\eta|^{2m} d\eta\right)^{1/2} + \left(\int_{|\eta|\leq d^{-1}} d^{-2m}  d\eta\right)^{1/2} \\
  & \lesssim  \|\sigma_2\|_{K,M}\,d^{-m-\frac{n}{2}}=  \|\sigma_2\|_{K,M}\,d^{\frac{n}{2}-\rho n},
 \end{align*}
where we have taken $K= [\frac{n}{2}]+1$ and $M= 2(2n+1).$

 The case $d > 1$ follows using the decomposition of $\sigma$ with $\theta=\tilde{\theta}$ and then proceeding analogously but
applying to the term corresponding to $T_{\sigma_1}$  Lemma~\ref{compactsupp} instead of Lemma~\ref{smallsupp}.
\end{proof}

\section{Proof of Theorem \ref{thm:bssobolev}}\label{sec:bssobolev}

For $s>0$, we recall the bilinear fractional integral operator of order $s>0,$ introduced in Kenig-Stein~\cite{KS}, defined by
\begin{align}\label{def:multfracop}
\mathcal{I}_{s} (f,g)(x) &:= \int_{\re^{2n}}
 \frac{f(y) g(z)}{(|x-y|+|x-z|)^{2n-s}}\,dydz,\quad x\in\re^n.
\end{align}
It easily follows that
\[
\mathcal{I}_s(f,g)(x)\le I_{s_1}(f)(x)\,I_{s_2}(g)(x) ,\qquad x\in\re^n,\,s_1+s_2=s,
\]
where
\[
I_\tau(h)(x)=\int_{\re^n}\frac{h(y)}{\abs{x-y}^{n-\tau}}\,dy,\quad 0<\tau<n,
\]
is the linear fractional integral. The boundedness properties of $I_{\tau},$ $0<\tau<n,$ and H\"older's inequality imply that $\mathcal{I}_s$ is bounded form $L^{p_1}\times L^{p_2}$ into $L^p$ with $\frac{1}{p}=\frac{1}{p_1}+\frac{1}{p_2}-\frac{s}{n},$ $0<s<2n,$ $1<p_1,\,p_2<\infty,$ $q>0.$

We now observe that if $\sigma\in\bs,$ $m\le 2n(\rho-1)-\rho s,$ $0<s<2n,$ then part \eqref{thm:kernelestimates-v} of  Theorem~\ref{thm:kernelestimates} implies that
\begin{equation}\label{eqn:boundbyfracint}
\abs{T_\sigma(f,g)(x)}\lesssim \abs{\mathcal{I}_s(f,g)(x)}.
\end{equation}
Therefore Theorem~\ref{thm:bssobolev} follows from this inequality and the boundedness properties of $\mathcal{I}_s.$ The case $\rho =1$ of
Theorem~\ref{thm:bssobolev} was treated in \cite{bmmn}.

\section{Proof of lemmas from Section~\ref{sec:main4}}\label{sec:prooflemmas}

\begin{proof}[Proof of Lemma~\ref{compactsupp}]
We have
\begin{equation}\label{intrepr}
T_\sigma(f,g)(x)=  \int_{\re^{2n}} \mathcal{K}(x,x-y,x-z) f(y) g(z) \,dy \, dz,
\end{equation}
where
\[
\mathcal{K}(x,y,z)= \int_{\re^{2n}} \ei{\xi}{y} \ei{\eta}{z} \sym \dxde=\mathcal{F}^{-1}_{2n}(\sigma(x,\cdot,\cdot))(y,z),
\]
and  $\mathcal{F}_{2n}$ denotes the inverse Fourier transform in $\re^{2n}.$
 Then, it is enough to show  that for  $N>n,$ $N\in\na_0,$
\begin{equation}\label{partA}
\sup_{x\in\re^n} \int_{\re^{2n}} \abs{\mathcal{K}(x,y,z)} \, dy \, dz \lesssim \, R^{2n}\,\norm{\sigma}{0,2N}.
\end{equation}
 and
\begin{equation}\label{partB}
\sup_{x\in\re^n}\int_{\re^{2n}} \abs{\mathcal{K}(x,y,z)} \, dy \, dz \lesssim  R^{(1-\rho) n + m} \norm{\sigma}{0,2N}.
\end{equation}
for part (a) and part (b), respectively. (Note that this allows to extend $T_\sigma$ to a bounded operator  form $L^\infty\times L^\infty$ into $L^\infty$ by using the representation \eqref{intrepr} to define $T_\sigma(f,g)$ for $f,g\in L^\infty$).

Since  $\sigma$ is a smooth function with  compact support in $\xi$ and $\eta$ we have
\begin{align}\label{intparts}
(1+\abs{(y,z)}^{2})^{N}\mathcal{K}(x,y,z)&=\int_{\re^{2n}}\sigma(x,\xi,\eta) \,(1-\Delta_\xi-\Delta_\eta)^{N}(e^{i\xi \cdot y}\, e^{i\eta\cdot  z})\,d\xi d\eta\\
&=\int_{\re^{2n}}(1-\Delta_\xi-\Delta_\eta)^{N}(\sigma(x,\xi,\eta)) \,e^{i\xi \cdot y}\, e^{i\eta \cdot z}\,d\xi d\eta\nonumber\\
&=\mathcal{F}^{-1}_{2n}((1-\Delta_\xi-\Delta_\eta)^{N}(\sigma(x,\cdot,\cdot)) )(y,z),\nonumber
\end{align}
and similarly,
\begin{align}\label{intparts2}
\abs{(y,z)}^{2N}\mathcal{K}(x,y,z)&=\mathcal{F}^{-1}_{2n}((-\Delta_\xi-\Delta_\eta)^{N}(\sigma(x,\cdot,\cdot)) )(y,z).
\end{align}

For part (a), we use \eqref{intparts} and that $R\le 1$ to get,
\[
\abs{\mathcal{K}(x,y,z)} \lesssim \frac{R^{2n}\norm{\sigma}{0,2N}}{(1+\abs{(y,z)}^2)^{N} }
\]
and then \eqref{partA} follows since $N > n.$

For part (b) we write
\begin{align*}
\int_{\re^{2n}} \abs{\mathcal{K}(x,y,z)}\, dy dz  = \mathop{\int}_{|y|+|z|\leq R^{-\rho}} \abs{\mathcal{K}(x,y,z)} \, dy dz +
\mathop{\int}_{|y|+|z|\geq R^{-\rho}} \abs{\mathcal{K}(x,y,z)}\,  dy dz.
\end{align*}
Let us now estimate the first integral. By Cauchy-Schwarz inequality,  Plancherel's identity and the fact that $R\ge 1,$ we have
\begin{align*}
 \left(\mathop{\int }_{|y|+|z|\leq R^{-\rho}} \abs{\mathcal{K}(x,y,z)} \,dy dz  \right)^2 & \lesssim\, R^{- 2 \rho n}
\mathop{\int }_{|y|+|z|\leq R^{-\rho}} \abs{\mathcal{K}(x,y,z)}^2\, dy dz\\
 &  \lesssim  R^{-2\rho n} \mathop{\int }_{|\xi|+|\eta|\sim R} \left| \sigma(x,\xi,\eta) \right|^2 d\xi d\eta\\
 & \lesssim \norm{\sigma}{0,0}^2 R^{-2\rho n} \mathop{\int }_{|\xi|+|\eta|\sim R} (1+ |\xi|+|\eta|)^{2m} d\xi d\eta\\
& \lesssim \norm{\sigma}{0,0}^2 R^{-2\rho n} R^{2m+ 2n} =\norm{\sigma}{0,0}^2 R^{2((1-\rho)n + m)}.
\end{align*}
Next, we estimate the second integral. Multiplying and dividing by $\abs{(y,z)}^{2N},$ and using the Cauchy-Schwarz inequality,  that $N>n,$ \eqref{intparts2},   Plancherel's identity, and that $R\ge 1,$ it follows that
\begin{align*}
 \left(\mathop{\int }_{|y|+|z|\geq R^{-\rho}} \abs{\mathcal{K}(x,y,z)} \, dy dz  \right)^2
\lesssim & \left(\mathop{\int}_{|y|+|z|\geq R^{-\rho}} \frac{1}{|(y,z)|^{4N}} dy dz\right)\\&\times \left( \mathop{\int }_{|y|+|z|\geq R^{-\rho}} \abs{ |(y,z)|^{2N}\mathcal{K}(x,y,z)}^2
dy dz\right)\\
 \lesssim & \,R^{\rho(4N-2n)} \mathop{\int}_{|\xi|+|\eta|\sim R}\abs{(-\Delta_\xi-\Delta_\eta)^{N}\sigma(x,\xi,\eta)) }^2\,d\xi d\eta\\
 \lesssim &\,\norm{\sigma}{0,2N}^2 R^{\rho(4N-2n)} \!\!\!\!\!\mathop{\int}_{|\xi|+|\eta|\sim R} (1 + |\xi| + |\eta|)^{2(m - \rho 2 N)} d\xi d\eta \\
 \lesssim& \,\norm{\sigma}{0,2N}^2 R^{\rho(4N-2n)} R^{2(m - \rho 2N+n)} \\
 = & \, \norm{\sigma}{0,2N}^2  R^{2((1-\rho)n + m)}.
\end{align*}

The last two computations give \eqref{partB}.
\end{proof}

\begin{proof}[Proof of Lemma~\ref{smallsupp}]  Let $Q,\,d,\,N, \,m$ and $\sigma$ be as in the hypothesis. By definition,
\[
T_\sigma(f,g)(x) = \int_{\re^{2n}} \sym \hat f(\xi) \hat g(\eta)\,\ei{x}{(\xi + \eta)} \,d\xi
\,d\eta,\quad f,\,g\in\mathcal{S}.
\]
Hence, for a fixed $j=1,\ldots,n$, the bilinear symbol $\tau=\tau(x,\xi,\eta)$ of the bilinear operator $
\frac{\partial T_\sigma(f,g)}{\partial x_j}$ is given by
\[
\tau \xxe = i (\xi_j + \eta_j) \sym +
 \frac{\partial \sigma}{\partial x_j} \xxe.
\]
Then symbol $\tau$ is also supported in $\{ \xxe : \absx + \abse \leq d^{-1} \}$ and $\tau\in BS^{m+\delta}_{\rho,\delta}.$ Elementary computations show that for $K,\,M\in\na_0,$
\begin{equation}\label{elementary}
\norm{\tau}{K,M}\le\max(1,d^{-1}) \norm{\sigma}{K+1,M},
\end{equation}
where $\norm{\tau}{K,M}$ corresponds to a norm of $\tau$ as an element of $BS^{m+\delta}_{\rho,\delta},$ while $\norm{\sigma}{K+1,M}$ corresponds to a norm of $\sigma$ as an element of $BS^{m}_{\rho,\delta}.$ Then
\begin{align*}
\int_Q |T_\sigma(f,g)(x)-T_\sigma(f,g)_Q|\dx
&= \frac{1}{\abs{Q}}  \int_Q \left| \int_Q
\left( T_\sigma(f,g)(x)-T_\sigma(f,g)(y) \right)\dy \right| \dx\\
& \leq  d\, \abs{Q}\norm{\nabla T_\sigma(f,g)}{L^\infty}  \lesssim d\, \abs{Q}\norm{T_\tau(f,g)}{L^\infty} \\
&\lesssim  d\, \abs{Q} \,\min(1,d^{-2n})\, \norm{\tau}{0,2N}\norm{f}{L^\infty}
\norm{g}{L^\infty} \\
&\lesssim   \abs{Q} \, \norm{\sigma}{1,2N}\norm{f}{L^\infty}
\norm{g}{L^\infty}\quad (\tm{by \eqref{elementary}}),
\end{align*}
where we have used Lemma \ref{compactsupp}.  The result follows.

\end{proof}

\begin{proof}[Proof of Lemma~\ref{bigsupport1}]  Let $d,\,N, \,m,$ $\phi$ and $\sigma$ be as in the hypothesis.  We notice that the bilinear symbol $\theta(x,\xi,\eta)$ of $R$ is given by
\begin{align*}
& \theta(x,\xi,\eta)=
 \int_{\re^{2n}} \ei{x}{(y+z)}\left(\sym - \sigma(x,\xi+y,\eta+z)\right)  \hat \phi(y) \hat
\phi(z) \dy \dz.
\end{align*}

 We first  assume that $d \leq 1$ and note that $\supp(\theta)\subset\{(x,\xi,\eta):|\xi|+|\eta|\ge \frac{1}{2}d^{-1}\}.$
Consider a partition of unity of $\re^{2n}$ given by $\{\psi_k\}_{k\in\na_0},$
\[\sum_{k\ge 0}\psi_k(\xi,\eta)=1,\quad \xi,\eta\in\re^{n},\]
 where  $\psi_0 \in \s (\re^{2n})$ is supported in the set $\{(\xi,\eta): \xpe \leq 2 d^{-1}\}$ and
$\psi_k(\xi,\eta)= \psi(d 2^{-k} \xi, d 2^{-k} \eta)$ with $\psi\in\mathcal{S}(\re^{2n})$ and
  $\supp (\psi) \subset  \{(\xi,\eta):1/2 \leq \xpe \leq 2\}$ for $k\ge 1.$ Then $\supp (\psi_k)\subset\{(\xi,\eta):\xpe \sim 2^k d^{-1}\}$ for $k\ge 1$ and
\[
\theta \xxe = \sum_{k \geq 0}  \theta_k(x,\xi,\eta),
\]
where $\theta_k\xxe := \theta \xxe \psi_k(\xi,\eta)$. We will show that for all integers
$M, k \geq 0$
\begin{equation}\label{thetak}
\norm{\theta_k}{0,M} \lesssim 2^{-\rho k}\norm{\sigma}{0,M+1},
\end{equation}
with constants depending only on  $M,\,n,\,\rho,$ and $\delta.$

Define $R_k$ as the bilinear pseudo-differential operator with kernel $\theta_k$. The
lemma will follow from (\ref{thetak}). Indeed,
\begin{align*}
 \norm{R(f,g)}{L^\infty} & \leq \sum_{k \geq 0} \norm{R_k(f,g)}{L^\infty} \lesssim \sum_{k \geq 0}  \norm{\theta_k}{0,2N} \norm{f}{L^\infty} \norm{g}{L^\infty}
& (\tm{by Lemma \ref{compactsupp}})  \\
& \lesssim \sum_{k \geq 0} 2^{-\rho k}\,\norm{\sigma}{0,2N+1}  \norm{f}{L^\infty} \norm{g}{L^\infty}
& (\tm{by \eqref{thetak}})\\
& \lesssim \norm{\sigma}{0,2N+1} \norm{f}{L^\infty} \norm{g}{L^\infty}.
\end{align*}

To prove (\ref{thetak}), consider multi-indices
$\beta$ and $\gamma$  such that $\abs{\beta},\abs{\gamma} \leq M$. Since
\begin{align*}
& \theta_k(x,\xi,\eta)=
 \int_{\re^{2n}}\ei{x}{(y+z)} \psi_k(\xi,\eta)\left(\sym - \sigma(x,\xi+y,\eta+z)\right)  \hat \phi(y) \hat
\phi(z) \dy \dz,
\end{align*}
we have
\begin{align*}
 &\left( \partial_\xi^\beta \partial_\eta^\gamma \theta_k\right) \xxe
  =\sum_{\lambda \leq \gamma, \omega \leq \beta}
 C_{\beta,\gamma, \omega,\lambda} \left(
\partial_\xi^{\beta-\omega}\partial_\eta^{\gamma-\lambda} \psi\right)(d 2^{-k} \xi,d 2^{-k} \eta)
 (2^{-k} d)^{\abs{\gamma-\lambda} + \abs{\beta-\omega} }\\& \times\int_{\re^{2n}} \hphi(y)\hphi(z)\ei{x}{(y+z)}  \left(
(\partial_\xi^\omega
\partial_\eta^\lambda \sigma)\xxe -  (\partial_\xi^\omega
\partial_\eta^\lambda \sigma) (x,\xi+y,\eta+z)\right)\dy\dz.
\end{align*}
The mean value theorem gives
\begin{align*}
& (\partial_\xi^\omega
\partial_\eta^\lambda \sigma )\xxe -  (\partial_\xi^\omega
\partial_\eta^\lambda  \sigma) (x,\xi+y,\eta+z)
= (\nabla_\xi \partial_\xi^\omega \nabla_\eta \partial_\eta^\lambda \sigma) (x,\tilde
\xi,\tilde \eta) \cdot (y,z),
\end{align*}
where $(\tilde \xi,\tilde \eta)=(\xi,\eta)+ s \,(y,z)$
for some $s \in (0,1)$. Since $\sigma \in \bs$, for $(\xi,\eta)\in \supp(\psi_k)\cap\supp(\theta)$ and $y,\,z\in\supp(\hat{\phi}),$ we then have
\begin{align*}
 \left| (\partial_\xi^\omega
\partial_\eta^\lambda  \sigma)\xxe  \right. & - \left. (\partial_\xi^\omega
\partial_\eta^\lambda  \sigma) (x,\xi+y,\eta+z) \right| \\
& \lesssim \norm{\sigma}{0,M+1}(1+\abs{\tilde \xi}+\abs{\tilde \eta})^{m -
\rho(\abs{\omega}+\abs{\lambda}+1)} |(y,z)|\\
& \lesssim \norm{\sigma}{0,M+1}(1+\abs{ \xi}+\abs{\eta})^{m -
\rho(\abs{\omega}+\abs{\lambda}+1)}  |(y,z)|,
\end{align*}
where we have used that $ \abs{\tilde \xi}+\abs{\tilde \eta} \simeq \abs{\xi}+\abs{\eta}$, since
$\abs{\xi}+\abs{\eta} \simeq 2^kd^{-1}$ and
$\abs{y}+ \abs{z} \leq  d^{-\rho}/4 \leq d^{-1}/4.$
Putting all together, and using again that  $2^k d^{-1}\ge 1,$ $d\le 1,$ and $1 + \abs{\xi}+\abs{\eta} \simeq \abs{\xi}+\abs{\eta} \simeq 2^k d^{-1},$
\begin{align*}
 \abs{\partial_\xi^\beta \partial_\eta^\gamma \theta_k \xxe} \lesssim &\norm{\sigma}{0,M+1}
 \upxe^{m -\rho(\abs{\gamma}+\abs{\beta})} (1+2^k d^{-1})^{-\rho}\\ &\times \mathop{\sum}_{\lambda \leq \gamma,\, \omega \leq \beta}
(2^{-k}d)^{(1-\rho)(\abs{\gamma-\lambda}+\abs{\beta-\omega})}\\
\lesssim & \norm{\sigma}{0,M+1}
 \upxe^{m -\rho(\abs{\gamma}+\abs{\beta})} 2^{-\rho k},
\end{align*}
which gives \eqref{thetak}.
\end{proof}

If $d > 1$ then we split
$\theta$ as
\[
\theta = \theta_1 + \theta_2,
\]
where $\supp(\theta_1) \subset \{ (\xi,\eta): \xpe \leq 2 \}$ (note that in the case
$d > 1$ we also have $\abs{y},\abs{z} \leq d^{-\rho}/8 \leq 1/8$) and
$\supp(\theta_2) \subset \{ (\xi,\eta): \xpe \geq 1 \}$.
A similar reasoning as above shows that $\|\theta_1\|_{0,M}\lesssim\|\sigma\|_{0,M+1}.$
 We then apply Lemma
\ref{compactsupp} to the bilinear pseudo-differential operator with symbol $\theta_1$
and reduce the analysis of $\theta_2$ to the case $d=1$.

\section{Weighted results}\label{sec:weights}

Given a weight $w$ defined on $\re^n$ and $p > 0$, the notation
$L^p_w$ will be used to refer to the weighted Lebesgue space of all
functions $f:\re^n\to\mathbb{C}$ such that
$\norm{f}{L^p_w}:=\int_{\re^n}\abs{f(x)}^pw(x)\,dx<\infty$, when
$w\equiv 1$ we will continue to simply write $L^p$ and
$\norm{f}{L^p},$ respectively.

  If $w_1,\,w_2$ are weights defined on $\re^n,$  $1\le p_1,\,p_{2}<\infty,$ $q>0,$
  and $  w:=w_1^{q/p_1}w_2^{q/p_2},$
we say that $(w_1,w_2)$ satisfies the $A_{(p_1,p_2),q}$ condition
(or that $(w_1,w_2)$ belongs to the bilinear  Muckenhoupt class
$A_{(p_1,p_2),q}$) if
$$[(w_1,w_2)]_{A_{(p_1,p_2),q}}:= \sup_{B }\Big(\frac{1}{|B|}\int_B w(x)\,dx\Big)\,
\prod_{j=1}^2 \Big(\frac{1}{|B|}\int_B w_j(x)^{1-p'_j}\,dx
\Big)^{\frac{q}{p'_j}}<\infty,$$
where the supremum is taken over all Euclidean balls $B\subset
\re^n;$ when $p_j=1$  $\Big(\frac{1}{|B|}\int_B w_j(x)^{1-p'_j}\,dx
\Big)^{\frac{1}{p'_j}}$ is understood as $(\inf_B w_j)^{-1}$.

The  classes $A_{(p_1,p_2),q}$ are  inspired in the classes of
weights $A_{p,q},$ $1\le p,\,q<\infty,$ defined by Muckenhoupt and
Wheeden in \cite{MW} to study weighted norm inequalities for the
fractional integral: a weight $u$ defined on $\re^n$ is in the class
$A_{p,q}$ if
$$\sup_B \left(\frac{1}{|B|}\int_{B} u^{\frac{q}{p}}\,dx\right)\left(\frac{1}{|B|}\int_B u^{(1-p')}\,dx\right)^{\frac{q}{p'}}<\infty.
$$
The classes $A_{(p_1,p_2),p}$\footnote{These classes were denoted by $A_{\vec P}$ in \cite{LOPTT}, with $\vec P = (p_1,p_2)$ determining $1/p=1/p_1+1/p_2$.} for $1/p=1/p_1+1/p_2$  were introduced by  Lerner et al
in \cite{LOPTT} to study characterizations of weights for
boundedness properties  of certain bilinear maximal functions and
bilinear Calder\'on-Zygmund operators in weighted Lebesgue spaces.
 Likewise, as shown by Moen \cite{Moen09}, the classes $A_{(p_1,p_2),q}$ characterize
the weights rendering analogous bounds for bilinear fractional
integral operators .




 Theorem~\ref{thm:bscz} and \cite[Corollary 3.9]{LOPTT} imply the following result.

 \begin{corollary}
 Let $0 \leq \delta \le \rho \le 1,$ $ \delta <
1,$ $0<\rho,$   $m_{cz}=2n(\rho-1)$, $1\le p_1,\,p_2<\infty$ and $p$ given by $\frac{1}{p}=\frac{1}{p_1}+\frac{1}{p_2}$. Suppose $\sigma \in
BS^{m}_{\rho,\delta},$ $m<m_{cz},$  $(w_1,w_2)$ satisfies the $A_{(p_1,p_2),p}$ condition and $ w= w_1^{p/p_1}w_2^{p/p_2}.$
\begin{enumerate}[(a)]
\item If $1<p_1,p_2<\infty$ then there exists $K,N\in\na_0$ such that
\[
\|T_\sigma(f,g)\|_{L^p_w} \lesssim \norm{\sigma}{K,N}\,\|f\|_{L^{p_1}_{w_1}}\|g\|_{L^{p_2}_{w_2}}.
\]
\item If $1\le p_1,p_2<\infty$ and $p_1=1$ or $p_2=1$ then there exists $K,N\in\na_0$ such that
\[
\|T(f,g)\|_{L^{p,\infty}_{w}}\ \lesssim \norm{\sigma}{K,N}\, \|f\|_{L^{p_1}_{w_1}}\|g\|_{L^{p_2}_{w_2}}.
\]
\end{enumerate}

\end{corollary}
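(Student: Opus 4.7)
The plan is to combine Theorem~\ref{thm:bscz} with the weighted theory for bilinear Calder\'on--Zygmund operators developed in \cite{LOPTT}. First, since $m<m_{cz}=2n(\rho-1)$, Theorem~\ref{thm:bscz} asserts that $T_\sigma$ is a bilinear Calder\'on--Zygmund operator. Tracing the constants through the proof of that theorem (which invokes parts \eqref{thm:kernelestimates-v} and \eqref{thm:kernelestimates-vi} of Theorem~\ref{thm:kernelestimates} for the kernel size and regularity, together with Theorem~\ref{MRS} for the $L^2\times L^2\to L^1$ boundedness), one checks that the standard Calder\'on--Zygmund constants of the kernel $\mathcal{K}(x,y,z)$ as well as the operator norm $\|T_\sigma\|_{L^2\times L^2\to L^1}$ are each controlled linearly by $\|\sigma\|_{K,N}$ for some fixed $K,N\in\na_0$ depending only on $n,\rho,\delta$.

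Once this is established, one invokes \cite[Corollary~3.9]{LOPTT}, which asserts that any bilinear Calder\'on--Zygmund operator $T$ satisfies the strong-type bound
\[
\|T(f,g)\|_{L^p_w}\lesssim \|f\|_{L^{p_1}_{w_1}}\|g\|_{L^{p_2}_{w_2}}
\]
whenever $1<p_1,p_2<\infty$, $\frac{1}{p}=\frac{1}{p_1}+\frac{1}{p_2}$, $(w_1,w_2)\in A_{(p_1,p_2),p}$, and $w=w_1^{p/p_1}w_2^{p/p_2}$; and the corresponding weak-type estimate $L^{p,\infty}_w$ when $p_1=1$ or $p_2=1$. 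The implicit constant there depends only on the dimension, the exponents, $[(w_1,w_2)]_{A_{(p_1,p_2),p}}$, the Calder\'on--Zygmund kernel constants of $T$, and the norm of $T$ on one prescribed product of unweighted Lebesgue spaces. Applying this directly to $T_\sigma$ yields both parts (a) and (b).

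The only point requiring attention is the linear dependence of the final constant on $\|\sigma\|_{K,N}$. This is immediate from the previous two paragraphs: the hypothesis of \cite[Corollary~3.9]{LOPTT} involves quantities that we have already controlled linearly by $\|\sigma\|_{K,N}$ through Theorem~\ref{thm:bscz}, so the resulting weighted estimate inherits that linear dependence. No genuine obstacle arises; the proof is simply the concatenation of these two results.
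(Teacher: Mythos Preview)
Your proposal is correct and follows exactly the approach indicated in the paper: the corollary is stated there as a direct consequence of Theorem~\ref{thm:bscz} (which shows $T_\sigma$ is a bilinear Calder\'on--Zygmund operator with constants controlled by $\|\sigma\|_{K,N}$) together with \cite[Corollary~3.9]{LOPTT}. Your additional remarks on tracking the linear dependence on $\|\sigma\|_{K,N}$ through the argument are accurate and simply make explicit what the paper leaves implicit.
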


Inequality \eqref{eqn:boundbyfracint} and \cite[Theorem 3.5]{Moen09} yield the following:
\begin{corollary}[Weighted version of Theorem~\ref{thm:bssobolev}]
Let $0 \leq \delta \leq 1,$ $0 < \rho \leq 1$, $s \in (0,2n)$, and
$m_{s}:= 2n(\rho -1) - \rho s$. If $\sigma \in
BS^{m}_{\rho,\delta},$ $m\le m_s,$ and $\frac{1}{q} = \frac{1}{p_1}
+ \frac{1}{p_1} - \frac{s}{n},$ $1 < p_1, p_2 < \infty$, then there
exist nonnegative integers $K$ and $N$ such that
\[
\norm{T_\sigma(f,g)}{L^q_w}\lesssim \norm{\sigma}{K,N}\,\norm{f}{L^{p_1}_{w_1}}\norm{g}{L^{p_2}_{w_2}},
\]
 for $ w:= w_1^{q/p_1}w_2^{q/p_2}$ and  pairs of weights $(w_1,w_2)$ satisfying the $A_{(p_1,p_2),q}$ condition.
\end{corollary}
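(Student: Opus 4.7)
The plan is to combine the pointwise domination of $T_\sigma$ by the bilinear fractional integral $\mathcal{I}_s$, which was already isolated as \eqref{eqn:boundbyfracint} in Section~\ref{sec:bssobolev}, with the sharp weighted bound for $\mathcal{I}_s$ proved by Moen. Concretely, I would first reduce to the borderline case $m = m_s$ via the continuous inclusion $BS^m_{\rho,\delta} \hookrightarrow BS^{m_s}_{\rho,\delta}$ (valid since $m \le m_s$), so that each seminorm $\|\sigma\|_{K,N}$ controls the corresponding seminorm in the larger class.

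Next, I would invoke part \eqref{thm:kernelestimates-v} of Theorem~\ref{thm:kernelestimates} with $M=0$ applied to the kernel $\mathcal{K}$ of $T_\sigma$. The condition $m_s+2n>0$ (which holds because $s<2n$ and $\rho>0$ force $m_s>-2n$ whenever we stay in the declared range) gives
\[
|\mathcal{K}(x,y,z)| \lesssim \|\sigma\|_{K,N}\,S(x,y,z)^{-(m_s+2n)/\rho} = \|\sigma\|_{K,N}\,S(x,y,z)^{-(2n-s)},
\]
since $(m_s+2n)/\rho = (2n\rho-\rho s)/\rho = 2n-s$; the linear dependence on $\|\sigma\|_{K,N}$ for some $K,N$ is exactly the final assertion of Theorem~\ref{thm:kernelestimates}. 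Inserting this into the kernel representation of $T_\sigma$ and using $|x-y|+|x-z| \simeq S(x,y,z)$ (up to constants) yields the pointwise bound
\[
|T_\sigma(f,g)(x)| \lesssim \|\sigma\|_{K,N}\,\mathcal{I}_s(|f|,|g|)(x), \qquad x \in \re^n,
\]
for all $f,g \in \mathcal{S}$.

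Finally, I would apply Moen's weighted estimate \cite[Theorem 3.5]{Moen09}: under the hypotheses $1<p_1,p_2<\infty$, $0<s<2n$, $1/q = 1/p_1+1/p_2 - s/n$, and $(w_1,w_2) \in A_{(p_1,p_2),q}$, one has
\[
\|\mathcal{I}_s(F,G)\|_{L^q_w} \lesssim \|F\|_{L^{p_1}_{w_1}} \|G\|_{L^{p_2}_{w_2}}, \qquad w = w_1^{q/p_1} w_2^{q/p_2}.
\]
Combining this with the pointwise bound above produces the desired inequality with the operator norm controlled by $\|\sigma\|_{K,N}$, and then extends from $\mathcal{S} \times \mathcal{S}$ to $L^{p_1}_{w_1} \times L^{p_2}_{w_2}$ by density (using that Schwartz functions are dense in each weighted Lebesgue space for the Muckenhoupt weights appearing here).

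There is no essential obstacle: the argument is a two-line bookkeeping exercise once the kernel estimate and Moen's theorem are in hand. The only mild points needing attention are the reduction to $m=m_s$ (to ensure the exponent in the kernel bound matches the order of the fractional integral) and verifying that the implicit constant in Theorem~\ref{thm:kernelestimates}\eqref{thm:kernelestimates-v} can indeed be taken linear in a fixed seminorm $\|\sigma\|_{K,N}$, both of which are already stated in the results we are citing.
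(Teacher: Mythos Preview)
Your proposal is correct and follows exactly the approach the paper takes: the paper's proof consists of the single sentence ``Inequality \eqref{eqn:boundbyfracint} and \cite[Theorem 3.5]{Moen09} yield the following,'' and you have simply unpacked that sentence. The extra bookkeeping you mention (reduction to $m=m_s$, linear dependence of the kernel constant on $\|\sigma\|_{K,N}$, density) is implicit in the paper's cited results and is handled correctly in your write-up.
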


\end{document}